\newtheorem{theorem}{Theorem}[section]
\newtheorem{lemma}[theorem]{Lemma}
\newtheorem{corollary}[theorem]{Corollary}
\theoremstyle{definition}
\newtheorem{definition}[theorem]{Definition}
\newtheorem{example}[theorem]{Example}
\newcommand{\<}{\langle}
\renewcommand{\>}{\rangle}
\newcommand{\minus}{\smallsetminus}
\DeclareMathOperator{\spec}{Spec}
\newcommand{\character}{\rho} 
\newcommand{\cellvar}{\zeta}
\newcommand{\N}{\mathbb{N}} 
\newcommand{\Z}{\mathbb{Z}} 
\newcommand{\Q}{\mathbb{Q}} 
\newcommand{\R}{\mathbb{R}} 
\newcommand{\field}{\Bbbk} 
\newcommand{\affinesemigp}{Q} 
\newcommand{\genset}{A} 
\newcommand{\realize}{\R_{\geq0}} 
\newcommand{\dimd}{d}
\newcommand{\facF}{F} 
\newcommand{\facG}{G}
\newcommand{\facH}{H} 
\newcommand{\facv}{v} 
\newcommand{\virtual}[1]{\widehat{#1}} 
\DeclareMathOperator{\relint}{RelInt}
\newcommand{\closure}[1]{\overline{#1}} 
\newcommand{\polyhedron}{P} 
\newcommand{\polytope}{\mathcal{P}} 
\newcommand{\crosssection}{K} 
\newcommand{\monoideal}{\mathcal{I}}
\newcommand{\idI}{J}
\newcommand{\ridI}{\idI_{\subcpx}}
\newcommand{\maxid}{\mathfrak{m}} 
\newcommand{\injmodule}{E}
\newcommand{\module}{M} 
\newcommand{\localcoho}[3]{H_{#1}^{#2}(#3)} 
\newcommand{\setS}{S}
\newcommand{\smallk}{k} 
\newcommand{\smalli}{i}
\newcommand{\hplane}{\mathscr{H}} 
\newcommand{\veca}{\mathbf{a}}
\newcommand{\vecb}{\mathbf{b}} 
\newcommand{\vecc}{\mathbf{c}}
\newcommand{\vecf}{\mathbf{f}}
\newcommand{\vecu}{\mathbf{u}} 
\newcommand{\vecv}{\mathbf{v}}
\newcommand{\varx}{x}
\newcommand{\ishida}{L} 
\newcommand{\rchaincpx}[1]{\tilde{\mathcal{C}}\left(#1\right)}
\newcommand{\facelattice}{\mathcal{F}} 
\newcommand{\incidence}{\epsilon} 
\newcommand{\hilbt}{\mathbf{t}} 
\newcommand{\hilb}[2]{\operatorname{Hilb}(#1,#2)} 
\newcommand{\tensor}[1]{
  \mathbin{\mathop{\otimes}\displaylimits_{#1}}
}  
\newcommand{\subcpx}{\Delta}
\newcommand{\arngmt}{\mathcal{A}} 
\newcommand{\regionr}[1]{\mathfrak{r}(#1)} 
\newcommand{\regionR}[1]{\mathfrak{R}(#1)} 
\newcommand{\regr}{\mathfrak{r}} 
\newcommand{\regR}{\mathfrak{R}}
\newcommand{\catname}[1]{{\normalfont\textbf{#1}}}
\DeclareMathOperator{\ass}{Ass} 
\DeclareMathOperator{\Hom}{Hom} 
\DeclareMathOperator{\nat}{nat}
\DeclareMathOperator{\face}{face}
\colorlet{ignored}{gray!50}
\colorlet{stdmcolor}{black}
\colorlet{idealcolor}{blue!60}
\colorlet{idealregioncolor}{gray!20}
\begin{document}

\title{A Generalization of the Ishida Complex with applications}

\author[Laura Felicia Matusevich]{Laura Felicia Matusevich}
\address[Laura Felicia Matusevich]{Department of Mathematics \\
Texas A\&M University \\ College Station, TX 77843.}
\email[Laura Felicia Matusevich]{matusevich@tamu.edu}
\author[Erika Ordog]{Erika Ordog}
\address[Erika Ordog]{Department of Mathematics \\
Texas A\&M University \\ College Station, TX 77843.}
\email[Erika Ordog]{erika.ordog@tamu.edu}
\author[Byeongsu Yu]{Byeongsu Yu}
\address[Byeongsu Yu]{Department of Mathematics \\
Texas A\&M University \\ College Station, TX 77843.}
\email[Byeongsu Yu]{byeongsu.yu@tamu.edu}

\date{}

\begin{abstract}
We construct a generalize Ishida complex to compute the local cohomology with monomial support of modules over quotients of polynomial rings by cellular binomial ideals. As a consequence, we obtain a combinatorial criterion to determine when such a quotient is Cohen--Macaulay. In particular, this gives a Cohen--Macaulayness criterion for lattice ideals. We also prove a result relating the local cohomology with radical monomial ideal support of an affine semigroup ring to the local cohomology with maximal ideal support of the quotient of the affine semigroup ring by the radical monomial ideal. This requires a combinatorial assumption on the semigroup, which holds for (not necessarily normal) semigroups whose cone is the cone over a simplex.
\end{abstract}

\subjclass[2020]{Primary 13F65, 13D45; Secondary 13F55.}
  
\keywords{}

\maketitle

\section{Introduction}
\label{sec:intro}

Stanley--Reisner rings and affine semigroup rings are a staple of combinatorial commutative algebra. The former are defined by squarefree monomial ideals, while the latter are defined by toric ideals. Both of these special classes of ideals fall under the general umbrella of binomial ideals, that is, ideals defined by polynomials with at most two terms.

Local cohomology is an important tool from homological commutative algebra, which has proved very useful in combinatorial settings. In this article, we study local cohomology of more general binomial ideals, known as \emph{cellular binomial ideals}. To do this, we generalize and adapt the Ishida complex, which was developed to compute local cohomology for modules over affine semigroup rings with support at the maximal monomial ideal, in two ways: by extending the base ring, and also the supporting monomial ideal. 

Extending the base ring is necessary, as quotients by cellular binomial ideals are not modules over affine semigroup rings. The proof that our generalized Ishida complex indeed computes local cohomology follows along the usual lines of checking that it works at homological degree zero, and then proving vanishing for injectives. The combinatorics involved becomes more challenging in the more general context.

Our original motivation for these developments was was twofold. First, we wanted to have a combinatorial criterion for when a lattice ideal is Cohen--Macaulay. Second, we wanted to better understand a duality result for local cohomology in the Stainley--Reisner case.

\subsection{Lattice ideals}
A \emph{lattice} is a subgroup of $\Z^n$. Given a lattice $L$, its \emph{lattice ideal} is
\[
I_L := \langle x^u-x^v \mid u-v \in L \rangle \subset \field[x_1,\dots,x_n],
\]
where $\field$ is a field.
The \emph{saturation} of a lattice $L$ is $L_{\text{sat}}:=(\Q\oplus_\Z L) \cap \Z^n$. $L$ is \emph{saturated} if it equals its saturation. Lattice ideals corresponding to saturated lattices are known as \emph{toric ideals}, and are easily seen to be prime. The toric ideal $I_{L_{\text{sat}}}$ is known to be a minimal prime of $I_L$, and if $\field$ is algebraically closed, all associated primes of $I_L$ are isomorphic to $I_{L_{\text{sat}}}$ by rescaling the variables~\cite{MR1394747}. 

Quotients by toric ideals are \emph{affine semigroup rings}. Cohen--Macaulayness of affine semigroup rings is well studied, see for instance~\cites{MR304376,MR857437,MY2022}. In the case of general lattice ideals, one can compute Betti numbers using suitable simplicial complexes. In special cases~\cites{MR1649322,MR1475887}, these simplicial complexes have tractable enough homology to provide combinatorial criteria to determine whether a quotient by a lattice ideal is Cohen--Macaulay. Moreover, there is no clear relationship between Cohen--Macaulayness of $I_L$ and $I_{L_{\text{sat}}}$~\cite{MR3957112}.

Local cohomology has been very effective to provide such combinatorial criteria in other situations, but was not previously studied for lattice ideals. This is mainly for two reasons: the lack of specialized tools (which are available for toric ideals but not in general), and the fact that the natural grading group for $I_L$ is $\Z^n/L$ which has torsion.

In this case we tackle both of these issues. First, we generalize the \emph{Ishida complex}, which computes local cohomology for modules over affine semigroup rings to the more general context of lattice ideals. (Actually, we can deal with more general binomial ideals known as \emph{cellular binomial ideals}.) Then we study torsion gradings. The end result is the desired combinatorial Cohen--Macaulayness criterion in the cellular binomial case.

\subsection{Duality for local cohomology}
If $\Delta$ is a simplicial complex, $I_\Delta$ is the associated squarefree monomial ideal, and $\field[\Delta] = \field[x_1,\dots,n]/I_{\Delta}$ the corresponding Stanley--Reisner ring, a comparison of Hilbert series formulas for local cohomology due to Hochster (for maximal ideal support) and Terai (for radical monomial ideal support)~\cite{Terai99} yields the following result (see also~\cite{Huneke07}*{Theorem 6.8} and the references therein)
\begin{equation}\label{eqn:duality}
H_{I_\Delta}^J(\field[x]) = 0 \Longleftrightarrow H^{n-j}_{\maxid}(\field[\Delta]) = 0.
\end{equation}
We asked the question whether there was a relationship in the nonvanishing case, and whether this would hold in the more general context of radical monomial ideals in affine semigroup rings.

This brought us to our second direction of generalization for the Ishida complex, to deal with monomial supports other than the homogeneous maximal ideal. 

In the end, we can prove an isomorphism of local cohomology generalizing~\eqref{eqn:duality} (Theorem~\ref{thm:duality_of_local_cohomologies}), but it requires a combinatorial condition on the affine semigroup ring. Nevertheless, our result does hold for affine semigroup rings whose corresponding cone is the cone over a simplex. We remark that we do not require a normality assumption.

\subsection*{Outline}
In section 2, we recall known results on binomial ideals. In Section 3, we introduce the generalized Ishida complex, which allows us to compute the local cohomology of a module over a polynomial ring quotient by a cellular binomial ideal with radical monomial ideal support. In section 4, we study local cohomology of cellular binomial ideals, and provide a Cohen--Macaulayness criterion. In Section 5, we focus on the local cohomology of a quotient of a simplicial affine semigroup ring by a radical monomial ideal. In section 6, we present our duality result for local cohomology.

\subsection*{Acknowledgments}
We are grateful to Aida Maraj, Aleksandra Sobieska, Catherine Yan, Jaeho Shin, Jennifer Kenkel, Jonathan Monta{\~n}o, Joseph Gubeladze, Kenny Easwaran, Melvin Hochster, Sarah Witherspoon, Semin Yoo, Serkan Ho{\c s}ten, Yupeng Li for inspiring conversations we had while working
on this project.
EO was supported by an NSF Mathematical Sciences Postdoctoral Research Fellowship under award DMS-2103253.

\section{Preliminaries}
\label{sec:preliminary}

\subsection{Affine semigroup rings}
\label{subsec:affine_semigroup}

An \emph{affine semigroup} $\affinesemigp$ is a finitely generated submonoid of $\Z^d$. Throughout this article, we denote by $\genset$ a $d\times n$ integer matrix of rank $d$ whose columns generate $\affinesemigp$, so that $\affinesemigp = \N \genset$.
We assume none of the columns of $\genset$ is the zero vector. 
If $\field$ is a field and $\affinesemigp$ is the affine semigroup given by $\genset$, we denote by $\field[\affinesemigp]=\field[\N \genset]$ the corresponding \emph{affine semigroup ring}. Since $\genset$ has rank $d$, $\field[\affinesemigp]$ is a $d$-dimensional $\field$-algebra.

 The \emph{cone} over the affine semigroup $\affinesemigp$ (or over $\genset$) is the (rational, polyhedral) cone
 $\realize\affinesemigp( = \realize \genset)$, that is, the set of all non-negative real combinations of columns of $A$. This cone is \emph{pointed} if it contains no lines. In this case, the affine semigroup $\affinesemigp$ is also called pointed. A \emph{face} of $\realize \genset$ is a subset of this cone where some linear functional on $\R^{d}$ is maximized over $\realize \genset$. We denote by $\facelattice(\realize\affinesemigp)$ the collection of all faces of $\realize\affinesemigp$. This set forms a lattice under inclusion. For ease in the notation, we identify a face of $\affinesemigp$ with the set of columns of $\genset$ that lie on that face. For a face $\facF \in \facelattice(\affinesemigp)$, the \emph{relative interior} $\relint(\N\facF)$ of the semigroup over $\facF$ is the set of elements of $\N \facF$ that do not belong to any subsemigroups arising from proper faces of $\N\facF$. A \emph{transverse section} $\crosssection$ of $\realize\affinesemigp$ is the intersection of $\realize\affinesemigp$ with a hyperplane which meets all unbounded faces of $\realize\affinesemigp$~\cite{Ziegler95}*{Exercise 2.19}. It is well-known that $\facelattice(\crosssection)$ is canonically bijective to $\facelattice(\realize\affinesemigp)$ as a poset.

Let $\spec_{\text{Mon}}\field[\affinesemigp]$ be the set of all monomial prime ideals of the affine semigroup ring $\field[\affinesemigp]$, ordered by inclusion. There is an order-reversing isomorphism between $\spec_{\text{Mon}}\field[\affinesemigp]$ and the face lattice $\facelattice(\crosssection)$ of the transverse section $\crosssection$ of $\realize\affinesemigp$ given by identifying a face of $\crosssection$ with the (prime) ideal generated by all monomials whose exponents do not belong to the corresponding face of $\realize\affinesemigp$. 

It is known that the set of all radical ideals is in one to one correspondence with the set of all polyhedral subcomplexes of $\facelattice(\affinesemigp)$. 

\subsection{Binomial ideals}
\label{subsec:affinesemigp}

Let $\field[x]=\field[x_1,x_2,\dots,x_d]$ be the polynomial ring over a field $\field$. A \emph{binomial} is a polynomial having at most two terms. A \emph{binomial ideal} is an ideal generated by binomials. We are interested in three types of binomial ideals, \emph{lattice ideals}, \emph{toric ideals}, and \emph{cellular binomial ideals}.

Let $L_{\character}$ be a subgroup of $\Z^{d}$. A \emph{partial character} $\character: L_{\character} \to \field^{\ast}$ on $\Z^{d}$ is a group homomorphism, where $\field^{\ast}$ is the multiplicative group of $\field$. The \emph{lattice ideal} $I(\character)$ corresponding to $\character$ is the ideal in $\field[x]$ defined as
\begin{equation}
\label{eqn:deflattice}
I(\character) :=\<x^{\vecu} - \rho(\vecu-\vecv)x^{v} \mid \vecu-\vecv \in L_{\character} \> \subset \field[x_1,\dots,x_n].
\end{equation}
We remark that in~\cite{MR1394747}, these lattice ideals are denoted by
$I(\character)_{+}$, while $I(\character)$ is used for lattice ideals in Laurent polynomial rings. Since we do not need the more general context, we use~\eqref{eqn:deflattice} for economy in the notation.

The \emph{saturation} $(L_{\rho})_{\text{sat}}$ of a lattice $L_{\rho}$ is $(L_{\rho})_{\text{sat}}:=(\Q \otimes_{\Z}L_{\rho})\cap \Z^{d}$. 
A lattice is \emph{saturated} if $L_{\rho}=(L_{\rho})_{\text{sat}}$. 
If $\field$ is algebraically closed, then $I(\character)$ is prime if and only if $L_{\rho}$ is saturated~\cite{MR1394747}*{Theorem 2.1.c.}.
Furthermore, the associated primes of a lattice ideal are lattice ideals corresponding to the saturation of the underlying lattice~\cite{MR1394747}*{Corollary 2.5}.  
In particular,~\cite{MR1394747}*{Corollary 2.5} implies that if $\field$ is an algebraically closed field of characteristic zero, then lattice ideals are radical, and  primary lattice ideals are prime. 

An ideal $I \subset \field[x]$ is \emph{cellular} if all variables are either nonzero divisors modulo $I$ or nilpotent modulo $I$. The nonzero divisor variables are known as the \emph{cellular variables} of $I$. Let $\cellvar \subset [n]$ be the set of all indices of cellular variables of a cellular binomial ideal $I$, then $I$ is \emph{$\cellvar$-cellular}. Lattice ideal are special cases of cellular binomial ideals, where all variables are cellular. Also, $I \cap \field[\N^{\cellvar}]$ is a lattice ideal~\cite{MR3556446}. 
The following result can be found in~\cite{MR1394747}*{Section~6} and also in~\cite{MR3556446}*{Corollary 3.5}.
\begin{theorem}
\label{thm:associated_primes_of_cellular}
Let $I$ be a $\cellvar$-cellular binomial ideal in $\field[x]$. The associated primes of $I$ are the ideals $\field[x] \cdot P+\< x_{i} \mid i \in \cellvar^{c}\>$, where $P \subset \field\left[\N^{\cellvar}\right]$ runs over the associated primes of lattice ideals of the form $(I: m) \cap \field\left[\N^{\cellvar}\right]$, for monomials $m \in \field\left[\N^{\cellvar^{c}}\right]$.
\end{theorem}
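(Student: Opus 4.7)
The plan is to establish both directions of the claimed correspondence between associated primes of $I$ in $\field[x]$ and pairs $(m,P)$, where $m \in \field[\N^{\cellvar^{c}}]$ is a monomial and $P$ is an associated prime of the lattice ideal $L_m := (I:m)\cap\field[\N^{\cellvar}]$. The crucial first reduction uses $\cellvar$-cellularity: every $x_i$ with $i \in \cellvar^{c}$ is nilpotent modulo $I$, so every associated prime $\mathfrak{q}$ of $I$ must contain $\<x_i \mid i \in \cellvar^{c}\>$. Writing $P := \mathfrak{q}\cap\field[\N^{\cellvar}]$, one obtains $\mathfrak{q} = P\cdot\field[x] + \<x_i \mid i \in \cellvar^{c}\>$ with $P$ a prime of $\field[\N^{\cellvar}]$, and the problem reduces to identifying precisely which primes $P$ arise. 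A preliminary fact used throughout is that $L_m$ really is a lattice ideal for every monomial $m$; this extends the earlier observation that $I\cap\field[\N^{\cellvar}]$ is a lattice ideal and follows from the same analysis applied to the colon ideal $(I:m)$, which retains the binomial-cellular structure.

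For the forward direction, suppose $P = (L_m:g)$ is an associated prime of $L_m$ witnessed by some $g \in \field[\N^{\cellvar}]$. I would produce a concrete witness $f = m'g'$ exhibiting $\mathfrak{q} := P\cdot\field[x] + \<x_i \mid i \in \cellvar^{c}\>$ as an associated prime of $I$. Take $m' = m \cdot \prod_{i\in\cellvar^{c}} x_i^{N_i}$ with $N_i$ so large, using nilpotence, that $x_i m' \in I$ for every $i \in \cellvar^{c}$, and choose $g' \in \field[\N^{\cellvar}]$ so that $(L_{m'}:g') = P$; the existence of such a $g'$ requires verifying that $P$ persists as an associated prime after enlarging $m$, which can be arranged by exploiting that the family $\{L_m\}$ is filtered under divisibility and stabilizes, so a modification of the original witness $g$ works. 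Then for $h \in \field[x]$ decomposed as $h = h_0 + h_+$ with $h_0 \in \field[\N^{\cellvar}]$ and $h_+ \in \<x_i\mid i \in \cellvar^{c}\>$, the relation $x_i m' \in I$ gives $h_+ m' \in I$, so $hm'g' \in I$ iff $h_0 g' \in L_{m'}$, iff $h_0 \in P$. This yields $(I:m'g') = \mathfrak{q}$.

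For the backward direction, take an associated prime $\mathfrak{q} = (I:f)$ and decompose $f = \sum_{\alpha} x^{\alpha} f_{\alpha}$ along its $\cellvar^{c}$-monomial support, with $f_{\alpha} \in \field[\N^{\cellvar}]$. The goal is to isolate a single pair $(m,g) = (x^{\alpha}, f_{\alpha})$ with $(L_m:g) = P$. This extraction is the main obstacle: different $\alpha$ in the support of $f$ may a priori yield different annihilators of $f_{\alpha}$ modulo $L_{x^{\alpha}}$, and one must argue that at least one of them equals $P$. My plan is to induct on the $\cellvar^{c}$-degree of $f$: examine a maximal-degree component $f_{\alpha_0}$, and check whether its annihilator modulo $L_{x^{\alpha_0}}$ equals $P$; if so we are done, and otherwise one uses the primeness of $P$ together with the lattice ideal structure of $L_{x^{\alpha_0}}$ to subtract an appropriate correction and replace $f$ by a cleaner witness of strictly smaller $\cellvar^{c}$-degree. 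Identifying the correct inductive invariant that survives this replacement, so that the process terminates with a genuine single-monomial witness, is the core technical difficulty of the argument.
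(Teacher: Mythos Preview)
The paper does not prove this theorem. It is stated in the preliminaries as a known result, with citations to \cite{MR1394747}*{Section~6} and \cite{MR3556446}*{Corollary~3.5}, and no argument is supplied. There is therefore nothing in the paper to compare your proposal against.

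As a brief assessment of the sketch on its own merits: the overall architecture is reasonable, but both directions contain gaps that you yourself flag without actually resolving. In the forward direction, the assertion that an associated prime $P$ of $L_{m}$ persists as an associated prime of $L_{m'}$ once $m$ is replaced by a suitable multiple $m'$ is not justified by the vague appeal to ``filtered under divisibility and stabilizes''; associated primes of lattice ideals can genuinely be lost under inclusion (for instance, $(y+1)$ is associated to $(y^{2}-1)$ but not to the larger ideal $(y-1)$), so one must invoke something specific to the cellular structure of $I$, not a general stabilization principle. In the backward direction, your induction on $\cellvar^{c}$-degree is only a plan: you neither specify the ``appropriate correction'' to subtract nor identify an invariant that survives the replacement and forces termination at a single-monomial witness. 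These are exactly the points at which the arguments in the cited references do nontrivial work, and you would need to consult them (or reproduce that work) to turn the outline into a proof.
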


A \emph{toric ideal} is the prime lattice ideal $I(\chi)$ corresponding to the trivial character $\chi:L_{\chi} \to \{1\} \subset \field^{\ast}$. An \emph{affine semigroup} $\affinesemigp$ is the quotient of $\N^{d}$ by the equivalence relation $\sim_{L_{\chi}}$ given by $\vecu \sim_{L_{\chi}} \vecv \iff \vecu-\vecv \in L_{\chi}$. A quotient of polynomial ring by corresponding prime lattice ideal $\field[x]/I(\chi)$ is isomorphic to an affine semigroup ring $\field[\affinesemigp]$~\cite{CCA}*{Theorem 7.3}.

From now on, we assume that the base field $\field$ is algebraically closed.

\section{Generalized Ishida complex}
\label{sec:gen_Ishida}

Given a lattice ideal $I$ (resp. $\cellvar$-cellular binomial ideal $I$), pick a minimal associated prime ideal $J$ of $I$ (resp. of $I \cap \field[\N^{\cellvar}]$). Since $\field$ is algebraically closed, $J$ is a binomial prime ideal, and the quotient $\field[\N^{\cellvar}]/J$ is isomorphic (by rescaling the variables) to an affine semigroup ring $\field[\affinesemigp]$ with $\affinesemigp=\N \genset$. The natural projection map $\field[x]/I \to \field[x]/J \cong \field[\affinesemigp]$ (resp. $\field[x]/I \to \field[x]/(J+\<x_{i}: i \in \cellvar^{c}\>) \cong \field[Q]$) induces $\genset$-\emph{grading} on $\field[x]/I$; in other words, for any monomial $\overline{x^{\vecu}} \in \field[x]/I$, the \emph{$\genset$-degree} of $\overline{x^{\vecu}}$ is $\deg_{\genset}(\overline{x^{\vecu}})=\genset \cdot \vecu$ (resp. $\genset \cdot \vecu^{\cellvar}$, where $\vecu^{\cellvar}:= (u_{i} \in \vecu; i \in \cellvar)$). 
We specify the $A$-grading on $\field[x]/I$ using the triple $(I,J,A)$ unless the minimal prime $J$ or the generators of affine semigroup $\N A$ are understood in context.

In this section, we generalize the Ishida complex to compute the local cohomology of a lattice ideal (resp. $\cellvar$-cellular binomial ideal) $(I,J,\genset)$ supported at a contraction of a radical monomial ideal of $\field[\N\genset]=\field[\affinesemigp]$. Recall that the contraction of a radical monomial ideal is also a monomial ideal in $\field[x]/I$.

\subsection{Ishida complex}
\label{subsec:Ishida}

The Ishida complex was originally developed for computing the local cohomology of modules over pointed affine semigroup rings supported on the graded maximal ideal~\cite{MR977758}. Given a pointed affine semigroup $\affinesemigp$ with transverse section $\crosssection$, there is a canonical isomorphism $\virtual{-}:\facelattice(\crosssection) \to \facelattice(\affinesemigp)$ given as follows: $\virtual{\facF}$ is the minimal face of $\affinesemigp$ such that $\realize\virtual{\facF} \supseteq \realize\facF$. Since $\affinesemigp$ is pointed, it has a unique zero-dimensional face, namely the origin, which corresponds to the (-1)-dimensional face $\varnothing$ of $\crosssection$. As a CW complex, $\crosssection$ has an incidence function $\incidence: \bigoplus_{\smalli=-1}^{\dimd-1}\facelattice(\crosssection) ^{\smalli}\times  \facelattice(\crosssection)^{\smalli+1}   \to \{0,\pm1 \}$ that has a nonzero value when two faces are incident.
We now recall the definition of the Ishida complex~\cite{MR977758}.

\begin{definition}
\label{def:ishida_cpx}
Let $\maxid$ be the maximal monomial ideal of $\field[\affinesemigp]$. The set of all $k$-dimensional faces in $\facelattice(\crosssection)$ is denoted by $\facelattice(\crosssection)^{k}$. Let $\ishida^{\bullet}$ be the chain complex 
\[
\begin{tikzcd}
\ishida^{\bullet}: 0\arrow[r] & \ishida^{0} \arrow[r,"\partial"] &\ishida^{1} \arrow[r,"\partial"] & \cdots \arrow[r,"\partial"] & \ishida^{d} \arrow[r,"\partial"] & 0, \qquad L^{\smallk} := \bigoplus\limits_{\facF \in \facelattice(\crosssection)^{\smallk-1}}\field[\affinesemigp-\N\virtual{\facF}]
\end{tikzcd}
\]
where the differential $\partial :L^{\smallk} \to L^{\smallk+1}$ is induced by the componentwise map $\partial_{\facF,\facG}$ with $\facF \in  \facelattice(\crosssection)^{\smallk-1}$, $\facG \in  \facelattice(\crosssection)^{\smallk}$ such that
\[
\partial_{\facF,\facG}: \field[\affinesemigp-\N\virtual{\facF}] \to \field[\affinesemigp-\N\virtual{\facG}] \text{ to be } \begin{cases} 0 & \text{ if } \facF \not\subset \facG \\ \incidence(\facF,\facG)\cdot\nat & \text{ if } \facF \subset \facG  \end{cases}
\]
with $\nat$, the canonical injection $\field[\affinesemigp-\N\virtual{\facF}] \to \field[\affinesemigp-\N\virtual{\facG}]$ when $\facF \subseteq \facG$. We say that $\ishida^{\bullet} \otimes_{\field[\affinesemigp]}\module$ is the \emph{Ishida complex of a $\field[\affinesemigp]$-module $\module$ supported at the maximal monomial ideal}.
\end{definition}

The cohomology of the Ishida complex of $\module$ supported at the maximal monomial ideal is isomorphic to the local cohomology of $\module$ supported at the maximal monomial ideal.

\begin{theorem}[\cite{MR977758}*{Theorem 6.2.5}]
\label{thm:Ishida}
For any $\field[\affinesemigp]$-module $\module$, and all $\smallk \geq 0$,
\[
\localcoho{\maxid}{\smallk}{\module} \cong H^{\smallk}(\ishida^{\bullet} \tensor{\field[\affinesemigp]}\module).
\]
\end{theorem}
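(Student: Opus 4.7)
My plan is to identify $H^\bullet(\ishida^\bullet \tensor{\field[\affinesemigp]} -)$ and $\localcoho{\maxid}{\bullet}{-}$ as the same derived functor by verifying the three ingredients of the universal property: that the Ishida cohomology functors form a cohomological $\delta$-functor in $\module$, that they agree with $\Gamma_\maxid$ in degree zero, and that they vanish in positive degrees on injective $A$-graded $\field[\affinesemigp]$-modules. A natural transformation in degree zero then extends uniquely to an isomorphism in all degrees.

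\textbf{Step 1 ($\delta$-functor structure).} Each term $L^k = \bigoplus_{\facF \in \facelattice(\crosssection)^{k-1}}\field[\affinesemigp - \N\virtual{\facF}]$ is a localization of $\field[\affinesemigp]$, hence $\field[\affinesemigp]$-flat. So for any short exact sequence $0 \to \module' \to \module \to \module'' \to 0$ of graded $\field[\affinesemigp]$-modules, the functor $\ishida^\bullet \tensor{\field[\affinesemigp]} -$ produces a short exact sequence of complexes, whose associated long exact cohomology sequence endows $H^\bullet(\ishida^\bullet \tensor{\field[\affinesemigp]} -)$ with its $\delta$-functor structure.

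\textbf{Step 2 (degree zero).} The $(-1)$-dimensional face of $\crosssection$ is $\varnothing$, with $\virtual{\varnothing}$ the origin, so $L^0 = \field[\affinesemigp]$ and $L^0 \tensor{\field[\affinesemigp]} \module = \module$. The vertices of $\crosssection$ correspond to the rays $\virtual{\facv_1},\dots,\virtual{\facv_r}$ of $\affinesemigp$, and the differential $L^0 \tensor{\field[\affinesemigp]} \module \to L^1 \tensor{\field[\affinesemigp]} \module$ is the product of the localization maps $\module \to \module_{\virtual{\facv_i}}$ weighted by $\incidence(\varnothing,\facv_i)$. An element of $\module$ lies in its kernel iff it is annihilated by some power of each ray monomial; since $\maxid$ has the same radical as the ideal generated by the ray monomials, this kernel is precisely $\localcoho{\maxid}{0}{\module}$, naturally in $\module$.

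\textbf{Step 3 (vanishing on injectives).} The indecomposable $A$-graded injective $\field[\affinesemigp]$-modules are (shifts of) $\injmodule_\facF := \field\{\affinesemigp - \Z\facF\}$ indexed by faces $\facF$ of $\affinesemigp$, so by additivity it suffices to show $H^k(\ishida^\bullet \tensor{\field[\affinesemigp]} \injmodule_\facF) = 0$ for $k > 0$. For each $\facG \in \facelattice(\crosssection)^{k-1}$, the summand $\field[\affinesemigp - \N\virtual{\facG}] \tensor{\field[\affinesemigp]} \injmodule_\facF$ is nonzero precisely when $\virtual{\facG}\subseteq \facF$, and in that case it is canonically $\injmodule_\facF$ itself. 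The complex $\ishida^\bullet \tensor{\field[\affinesemigp]} \injmodule_\facF$ thus reduces, with signs $\incidence(\facG_1,\facG_2)$, to the reduced cellular cochain complex (shifted appropriately) of the closed face of $\crosssection$ corresponding to $\facF$. That closed face is a ball, hence contractible, so its reduced cohomology vanishes and we obtain the desired vanishing.

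The main technical obstacle is Step 3: accurately pinning down the indecomposable injectives in the $A$-graded category (here it is important that $\affinesemigp$ is pointed so that $\crosssection$ exists), verifying that the surviving summands after tensoring with $\injmodule_\facF$ are exactly those indexed by $\facG$ with $\virtual{\facG}\subseteq \facF$, and matching the Ishida signs $\incidence(\facF_1,\facF_2)$ with the cellular coboundary of a contractible closed face of $\crosssection$. Once these combinatorial identifications are in place, Steps 1--3 together with the universal property of derived functors yield the isomorphism $\localcoho{\maxid}{k}{\module} \cong H^k(\ishida^\bullet \tensor{\field[\affinesemigp]} \module)$ for all $k \geq 0$.
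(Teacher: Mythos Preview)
The paper does not give its own proof of this statement---it is quoted from Ishida's original work---but your three-step argument is exactly the template the paper adopts for the generalization in Theorem~\ref{thm:Ishida_gen}: flatness of the localization summands supplies the $\delta$-functor structure (stated in the proof of Theorem~\ref{thm:Ishida_gen}), Lemma~\ref{lem:first_step_ishida} is the degree-zero identification $H^0 \cong \Gamma_{\maxid}$, and Lemmas~\ref{lem:second_step_ishida}--\ref{lem:tensoring_injective_module} carry out the vanishing on injectives by reducing to the reduced chain complex of a face of the transverse section, which is contractible. So your proposal is correct and matches the paper's method.

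The one noteworthy difference is in Step~3. You test vanishing on the $A$-graded indecomposable injectives $\injmodule_\facF = \field\{\affinesemigp - \Z\facF\}$, which is the cleanest route when $\affinesemigp$ is pointed and one works in the graded module category. The paper, because it must handle quotients by cellular binomial ideals that are not themselves affine semigroup rings, instead tests on ungraded injective hulls $\injmodule(\field[x]/P)$ for arbitrary primes $P \supseteq I$, and then proves separately (Lemma~\ref{lem:injective_module_with_face}) that the set of generators acting invertibly on $\injmodule(\field[x]/P)$ really is a face of $\N\genset$. For the classical statement over $\field[\affinesemigp]$ your formulation is the more direct one; the paper's version is what is needed to push the argument through in the generalized setting where the grading group may have torsion and the ring is no longer a domain.
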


Let $\ridI$ be the radical monomial ideal of $\field[\affinesemigp]$ associated to a subcomplex $\Delta \subset \facelattice(\affinesemigp)$. Then, $\sqrt{\ridI \cdot \field[x]/I}$ denotes the contraction of $\ridI$ via $\field[x]/I \to \field[x]/J \cong \field[\affinesemigp]$. To compute the local cohomology of a $\field[x]/I$-module supported on $\sqrt{\ridI \cdot \field[x]/I}$, we construct a (generalized) Ishida complex below.

Let $\crosssection_{\ridI}$ be a transverse section of the polyhedron $\realize\{u \in \Z^{\cellvar}\mid x^{u} \in \ridI\}$ with the canonical isomorphism $\virtual{-} : \facelattice(\crosssection_{\ridI}) \to \facelattice(\affinesemigp)$ where $\virtual{\facF}$ is the minimal face of $\affinesemigp$ such that $\realize\virtual{\facF} \supseteq \realize\facF$. The set of all $k$-dimensional faces in $\facelattice(\crosssection_{\ridI})$ is denoted by $\facelattice(\crosssection_{\ridI})^{k}$. Also, $(\field[x]/I)_{\virtual{\facF}}$ refers to the localization of $\field[x]/I$ by the multiplicative set consisting of all monomials in $\field[\N^{\cellvar}]$ whose $A$-graded degrees are in $\N\virtual{\facF}$. 

\begin{definition}[Generalized Ishida complex]
\label{def:gen_ishida}
Let $\ishida^{\bullet}$ be the chain complex 
\[
\begin{tikzcd}
\ishida^{\bullet}: 0\arrow[r] & \ishida^{0} \arrow[r,"\partial"] &\ishida^{1} \arrow[r,"\partial"] & \cdots \arrow[r,"\partial"] & \ishida^{d} \arrow[r,"\partial"] & 0, \quad L^{\smallk} := \bigoplus\limits_{\facF \in \facelattice(\crosssection_{\ridI})^{\smallk-1}}\left(\field[x]/I\right)_{\virtual{\facF}}
\end{tikzcd}
\]
where the differential $\partial :L^{\smallk} \to L^{\smallk+1}$ is induced by a componentwise map $\partial_{\facF,\facG}$ with two faces $\facF \in  \facelattice(\crosssection_{\ridI})^{\smallk-1}$, $\facG \in  \facelattice(\crosssection_{\ridI})^{\smallk}$ such that
\[
\partial_{\facF,\facG}: \left(\field[x]/I\right)_{\virtual{\facF}} \to\left(\field[x]/I\right)_{\virtual{\facG}} \text{ to be } \begin{cases} 0 & \text{ if } \facF \not\subset \facG \\ \incidence(\facF,\facG)\cdot\nat & \text{ if } \facF \subset \facG  \end{cases}
\]
with $\nat$, the canonical injection $\left(\field[x]/I\right)_{\virtual{\facF}} \to \left(\field[x]/I\right)_{\virtual{\facG}}$ when $\facF \subseteq \facG$. We say that $\ishida^{\bullet} \otimes_{\field[x]/I}\module$ is the \emph{Ishida complex of a $(\field[x]/I)$-module $\module$ supported at the radical monomial ideal $\sqrt{\ridI \cdot \field[x]/I}$}.
\end{definition}

The following theorem is the main result in this section. The proof is adapted from~\cite{BH_CMrings}. The key ingredients are~\cref{lem:first_step_ishida} and~\cref{lem:second_step_ishida} which are given later.

\begin{theorem}
\label{thm:Ishida_gen}
For any $\field[x]/I$-module $\module$, and all $\smallk \geq 0$,
\[
\localcoho{\ridI \cdot \field[x]/I}{\smallk}{\module} \cong \localcoho{\sqrt{\ridI \cdot \field[x]/I}}{\smallk}{\module} \cong H^{\smallk}(\ishida^{\bullet} \tensor{\field[x]/I}\module).
\]
\end{theorem}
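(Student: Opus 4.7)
The plan is to adapt the classical proof that the Ishida complex computes local cohomology, as presented in~\cite{BH_CMrings}, to our broader setting. The first isomorphism is a general fact: local cohomology only depends on the radical of the supporting ideal, and $\ridI \cdot \field[x]/I$ and $\sqrt{\ridI \cdot \field[x]/I}$ have the same radical. So the work is in establishing the second isomorphism.

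First I would observe that each term $L^{\smallk}$ of the generalized Ishida complex is a direct sum of localizations of $\field[x]/I$, hence a flat $(\field[x]/I)$-module. Consequently, the functor $\module \mapsto \ishida^{\bullet}\tensor{\field[x]/I}\module$ is exact on short exact sequences, so the family of functors $\module \mapsto H^{\smallk}(\ishida^{\bullet}\tensor{\field[x]/I}\module)$ forms a cohomological $\delta$-functor from $(\field[x]/I)$-modules to $(\field[x]/I)$-modules. The functors $\module \mapsto \localcoho{\sqrt{\ridI\cdot \field[x]/I}}{\smallk}{\module}$ form the universal $\delta$-functor extending $\Gamma_{\sqrt{\ridI\cdot \field[x]/I}}$, so to identify the two it suffices to check (i) that the degree-zero cohomology of the Ishida complex agrees with $\Gamma_{\sqrt{\ridI\cdot \field[x]/I}}$, and (ii) that $H^{\smallk}(\ishida^{\bullet}\tensor{\field[x]/I}\module)$ is effaceable on injectives for $\smallk>0$.

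The degree-zero identification will be the content of~\cref{lem:first_step_ishida}: an element $m \in \module$ lies in $\ker\partial^{0}$ precisely when every localization $(\field[x]/I)_{\virtual{\facF}}$ for $\facF$ a vertex of $\crosssection_{\ridI}$ sends $m$ compatibly into the codimension-one localizations, which combinatorially forces $m$ to be annihilated by some power of $\sqrt{\ridI\cdot \field[x]/I}$. The effaceability statement will be the content of~\cref{lem:second_step_ishida}: for an injective $(\field[x]/I)$-module $\injmodule$, one shows $H^{\smallk}(\ishida^{\bullet}\tensor{\field[x]/I}\injmodule)=0$ for all $\smallk>0$, reducing to the indecomposable injectives, whose associated primes (by~\cref{thm:associated_primes_of_cellular}) are indexed by faces of $\affinesemigp$, and then recognizing the resulting complex as (up to a degree shift) the augmented simplicial cochain complex of the link of the relevant face in $\crosssection_{\ridI}$, which is acyclic.

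The main obstacle will be this second step. In the classical setting of~\cite{MR977758} one works over a normal affine semigroup ring with its well-understood module of fractions, and the reduction to a simplex-like combinatorial complex is clean. Here two complications arise at once: the ring $\field[x]/I$ carries a nontrivial cellular binomial structure, so its localizations $(\field[x]/I)_{\virtual{\facF}}$ are not simply $\field[\affinesemigp-\N\virtual{\facF}]$ but involve nilpotent non-cellular variables and the twist by the partial character $\character$; and $\crosssection_{\ridI}$ is in general a proper subcomplex of $\facelattice(\crosssection)$, so the combinatorial complex to be recognized is the link/star of a face in this subcomplex rather than in a full cross-section. Handling the nilpotent and character contributions requires carefully tracking the $\genset$-grading specified by the triple $(I,J,\genset)$, and showing that injectives decompose compatibly with faces $\facF \in \facelattice(\crosssection_{\ridI})$; once this is done, acyclicity reduces to the standard computation that the link of a face in a polytopal complex has trivial reduced homology in the relevant range. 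Combining (i) and (ii) with the $\delta$-functor comparison then yields the claimed isomorphism for every $\smallk \geq 0$.
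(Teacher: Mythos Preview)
Your proposal is correct and follows essentially the same route as the paper: use flatness of the localizations to get a $\delta$-functor, identify $H^{0}$ with $\sqrt{\ridI\cdot\field[x]/I}$-torsion via \cref{lem:first_step_ishida}, and check vanishing on injectives via \cref{lem:second_step_ishida}. One small correction to your anticipated combinatorics in the second step: after tensoring with an indecomposable injective $E(\field[x]/P)$ associated to a face $\facF$, the surviving complex is not the cochain complex of a link in $\crosssection_{\ridI}$ but of $\crosssection_{\ridI}\cap\realize\facF$, which the paper shows (\cref{lem:closed_subpolytope}) is a single \emph{face} of the polytope $\crosssection_{\ridI}$ and hence contractible; that, rather than any general acyclicity statement about links, is what gives exactness.
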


The first step in our proof is to verify that the zeroth homology of the generalized Ishida complex computes torsion.

\begin{proof}
This follows from~\cref{lem:first_step_ishida} and~\cref{lem:second_step_ishida} together with the fact that all the summands of the components of the Ishida complex are flat, thus $-\tensor{\field[x]/I} \ishida^{\bullet}$ is an exact functor.
\end{proof}

\begin{lemma}
\label{lem:first_step_ishida}
$H^{0}(\ishida \otimes_{\field[x]/I}M) \cong (0:_{M} \ridI^{\infty})$.
\end{lemma}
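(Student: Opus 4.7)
The plan is to identify the kernel of the first differential of the generalized Ishida complex explicitly, and to match it with the torsion submodule $(0:_{M}\ridI^{\infty})$ by a direct combinatorial argument. This mirrors the standard proof in the classical Ishida setting, with the subcomplex $\subcpx$ associated to $\ridI$ replacing the full face lattice.

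First I will unpack the low-degree terms of $\ishida\tensor{\field[x]/I}M$. The poset $\facelattice(\crosssection_{\ridI})$ has a unique $(-1)$-dimensional face $\varnothing$, and $\virtual{\varnothing}$ is the minimal face of $\affinesemigp$ (the origin), at which localization is trivial. Hence $L^{0}\tensor{\field[x]/I}M\cong M$. Similarly,
\[
L^{1}\tensor{\field[x]/I}M\cong\bigoplus_{\vecv\in\facelattice(\crosssection_{\ridI})^{0}}M_{\virtual{\vecv}},
\]
and the differential $\partial^{0}\otimes 1$ is, up to signs, the direct sum of the canonical localization maps $\iota_{\vecv}:M\to M_{\virtual{\vecv}}$. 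Therefore $m\in M$ lies in $\ker(\partial^{0}\otimes 1)$ if and only if $\iota_{\vecv}(m)=0$ for every vertex $\vecv$, which by the definition of localization means that for each such $\vecv$ there exists a monomial $y_{\vecv}\in\field[\N^{\cellvar}]$ with $\genset$-degree in $\N\virtual{\vecv}$ satisfying $y_{\vecv}\cdot m=0$ in $M$.

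The heart of the argument is to convert this family of pointwise annihilators into annihilation by a power of $\ridI\cdot\field[x]/I$. The required combinatorial input is that the faces $\{\virtual{\vecv}:\vecv\in\facelattice(\crosssection_{\ridI})^{0}\}$ are precisely the minimal faces of $\affinesemigp$ lying outside $\subcpx$, so the monomials of $\field[\affinesemigp]$ whose $\genset$-degrees lie in $\bigcup_{\vecv}\N\virtual{\vecv}$ jointly generate an ideal with the same radical as $\ridI$. Because $\facelattice(\crosssection_{\ridI})^{0}$ is finite, a uniform exponent $N$ can be chosen so that $\ridI^{N}m=0$ if and only if $y_{\vecv}^{N}m=0$ for every $\vecv$. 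Since $(0:_{M}\ridI^{\infty})$ is insensitive to radicals, this identifies $\ker(\partial^{0}\otimes 1)$ with $(0:_{M}\ridI^{\infty})$, interpreting $\ridI$ via its pullback to $\field[x]/I$ through the composition $\field[x]/I\twoheadrightarrow\field[x]/(J+\<x_{i}:i\in\cellvar^{c}\>)\cong\field[\affinesemigp]$.

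The main obstacle will be the combinatorial step just described: one must verify that $\crosssection_{\ridI}$ has vertices indexed exactly by the minimal faces of $\affinesemigp$ outside $\subcpx$, and that the associated monomials generate an ideal with radical equal to $\ridI$. This rests on a careful analysis of the polyhedron $\realize\{u\in\Z^{\cellvar}:x^{u}\in\ridI\}$ and its transverse section. The cellular binomial setting adds a minor subtlety, since the $\genset$-grading is defined only through the cellular variables, but the nilpotent variables have nilpotent image in any localization of $\field[x]/I$ at a face of $\affinesemigp$, and so do not disturb the torsion computation.
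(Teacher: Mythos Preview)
Your proposal is correct and follows essentially the same route as the paper. Both arguments reduce the computation of $H^{0}$ to establishing that the ideal generated by the monomials in the multiplicative sets attached to the vertices of $\crosssection_{\ridI}$ has the same radical as $\ridI\cdot\field[x]/I$; once this is known, the identification of the kernel of $\partial^{0}\otimes 1$ with $(0:_{M}\ridI^{\infty})$ is formal, exactly as you describe. The paper carries out that radical equality explicitly: for the nontrivial inclusion it takes a monomial $\overline{x^{\vecu}}$ lying over $\ridI$, observes that its $A$-degree sits in $\N\virtual{\facF}$ for some face $\facF$ of $\crosssection_{\ridI}$, and then writes $\vecu$ as a rational nonnegative combination of degrees coming from the vertices of $\facF$, clearing denominators to land a power of $\overline{x^{\vecu}}$ in the vertex-generated ideal. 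Your assertion that the $\virtual{\vecv}$ are exactly the minimal faces of $\affinesemigp$ outside $\subcpx$ is the combinatorial content that makes this decomposition possible, and it is implicit in (though not stated as such by) the paper.
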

\begin{proof}
It suffices to show that 
\[
\sqrt{\left\<\bigcup\limits_{\facF \in \facelattice(\crosssection_{\ridI})^{0}} \{ \overline{\varx^{\vecu}} \in \field[\N^{\cellvar}]/(I\cap \field[\N^{\cellvar}]): \deg_{A}(\vecu) \in \relint(\N\virtual{\facF})\} \right\>} = \sqrt{\ridI \cdot \field[x]/I}.
\]
The generators of the left hand-side ideal might be not the same as those of the multiplicative sets inducing localization of components in $L^{1}$ when $I$ is not toric. However, they admit the same radical ideal in $\field[x]/I$.

First, let $\overline{\varx^{\vecu}} \in \field[x]/ I$ be an element whose $A$-degree is in $\N\virtual{\facF}$ for a vertex $\facF$ of $\crosssection_{\ridI}$. 
The canonical map $\field[x]/I \to \field[x]/J \cong\field[\N\genset]$ sends $\overline{\varx^{\vecu}}$ to $\varx^{\deg_{A}(\vecu)} \in \field[\N\genset]$ where $\deg_{A}(\vecu) \in \relint(\N\virtual{\facF})$. 
Since $\virtual{\facF} \not\in \Delta$, $\varx^{\deg_{A}(\vecu)} \in \ridI$, we have that $\overline{\varx^{\vecu}} \in \sqrt{\ridI \cdot \field[x]/I}$ by the correspondence between polyhedral subcomplexes and the radical monomial ideals.

Conversely, let $f \in \field[\N^{\cellvar}]$ be a preimage of a monomial in $\ridI \subset \field[\N\genset]$ and $g \in \field[x]/I$ be an $A$-homogeneous element of $\field[x]/I$. Then, $f = \overline{x^{\vecu}}$ for some $\vecu \in \N^{\cellvar}$ such that $A \vecu \in \N\overline{\facF}$ for some $\facF \in \facelattice(\crosssection_{\ridI})$. If $\dim\facF =0$, $fg$ is in the left hand-side of the equation. Suppose $\dim \facF >0$; then $\facF$ has vertices $\{v_{1},\dots, v_{m}\}$. Then, $\vecu$ is a linear combination of elements of $\N\virtual{v_{i}}$ over $\Q$, say $\vecu= c_{1}\vecu_{1}+ \cdots + c_{m}\vecu_{m}$ where $\vecu_{i} \in \N\virtual{v_{i}}$ and $c_{i} \in \Q$. Multiplying $\vecu$ by a suitable number $N$, we may assume that $c_{i} \in \N$. Then, $f^{N} = \prod_{i=1}^{m}(f_{i})^{c_i}$ where $f_{i}= \overline{x^{\vecu_{i}}}$, which implies that $f^{N}g^{N} $ is in the left hand-side, thus $fg$ is in the left hand-side.
\end{proof}

To complete the proof of our main result, we need to check that the generalized Ishida complex is exact on injectives. This is stated in the following lemma, which requires three auxiliary results.

\begin{lemma}
\label{lem:second_step_ishida}
If $M$ is an injective $\field[x]/I$-module, then $\ishida^{\bullet}\tensor{\field[x]/I} M$ is exact.
\end{lemma}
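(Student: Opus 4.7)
The plan is to adapt the classical argument for the Ishida complex (cf.~\cite{MR977758}) to the present generalized setting. Since $\field[x]/I$ is Noetherian, Matlis' structure theorem decomposes any injective $M$ as a direct sum of indecomposable injective hulls $E_\mathfrak{p} = E\bigl((\field[x]/I)/\mathfrak{p}\bigr)$ indexed by primes $\mathfrak{p}$ of $\field[x]/I$. Each component of $\ishida^\bullet$ is a flat localization, so tensor commutes with the direct sum, and it suffices to verify $H^k(\ishida^\bullet \tensor{\field[x]/I} E_\mathfrak{p}) = 0$ for every $k \geq 1$ and every prime $\mathfrak{p}$.

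I next identify the surviving terms. For an indecomposable injective, localization at a multiplicative set $S$ yields $E_\mathfrak{p}$ if $S \cap \mathfrak{p} = \emptyset$ and $0$ otherwise. The summand indexed by $\facF \in \facelattice(\crosssection_\ridI)^{k-1}$ is the localization at (the images of) monomials $x^{\vecu} \in \field[\N^\cellvar]$ with $A\vecu \in \N \virtual{\facF}$; by~\cref{thm:associated_primes_of_cellular}, $\mathfrak{p}$ corresponds to a unique face $\sigma_\mathfrak{p} \in \facelattice(\affinesemigp)$ with $\overline{x^{\vecu}} \in \mathfrak{p}$ iff $A\vecu \notin \sigma_\mathfrak{p}$, so the $\facF$-summand contributes precisely when $\virtual{\facF} \subseteq \sigma_\mathfrak{p}$. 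Set $\Gamma_\mathfrak{p} := \{\facF \in \crosssection_\ridI : \virtual{\facF} \subseteq \sigma_\mathfrak{p}\}$; this is a subcomplex of $\crosssection_\ridI$ because $\virtual{-}$ is order-preserving. Equipping $\Gamma_\mathfrak{p}$ with the incidence $\incidence$ inherited from $\crosssection_\ridI$, the complex $\ishida^\bullet \tensor{} E_\mathfrak{p}$ becomes the augmented cellular cochain complex of $\Gamma_\mathfrak{p}$ tensored with $E_\mathfrak{p}$, so $H^k(\ishida^\bullet \tensor{} E_\mathfrak{p}) \cong \tilde{H}^{k-1}(\Gamma_\mathfrak{p};\field) \otimes_\field E_\mathfrak{p}$.

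The main obstacle will be to verify $\tilde{H}^*(\Gamma_\mathfrak{p}) = 0$ in the relevant range. If $\sigma_\mathfrak{p} \in \subcpx$, then since $\subcpx$ is closed under subfaces, no face of $\crosssection_\ridI$ apart from the empty face has $\virtual{\facF} \subseteq \sigma_\mathfrak{p}$; the Ishida complex collapses to $E_\mathfrak{p}$ in degree zero, producing $H^0 = E_\mathfrak{p}$ (which matches $\Gamma_\ridI(E_\mathfrak{p}) = E_\mathfrak{p}$, since $\ridI \subseteq \mathfrak{p}$ in this case) and $H^k = 0$ for $k \geq 1$. If instead $\sigma_\mathfrak{p} \notin \subcpx$, then $\sigma_\mathfrak{p}$ itself is a non-$\subcpx$ face contained in $\sigma_\mathfrak{p}$ and is the unique maximal element of $\Gamma_\mathfrak{p}$. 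Geometrically, $\Gamma_\mathfrak{p}$ realizes as the closed cell of $\sigma_\mathfrak{p}$ in $\crosssection_\ridI$, namely the cross-section polytope of $\sigma_\mathfrak{p}$ with a subcomplex of its boundary removed but with its top-dimensional interior retained; radial deformation retraction from any interior point shows $\Gamma_\mathfrak{p}$ is contractible, so $\tilde{H}^*(\Gamma_\mathfrak{p}) = 0$ and $\ishida^\bullet \tensor{} E_\mathfrak{p}$ is acyclic in positive degrees.
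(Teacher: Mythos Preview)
Your overall strategy matches the paper's: reduce to indecomposable injectives $E_{\mathfrak p}$, attach to each prime a face $\sigma_{\mathfrak p}$ of $\affinesemigp$, identify the surviving summands of $\ishida^\bullet\otimes E_{\mathfrak p}$ as those indexed by faces $\facF$ of $\crosssection_{\ridI}$ with $\virtual{\facF}\subseteq\sigma_{\mathfrak p}$, and then argue that this subcomplex is contractible. That is exactly what the paper does via \cref{lem:injective_module_with_face}, \cref{lem:closed_subpolytope}, and \cref{lem:tensoring_injective_module}. However, two of your steps are not adequately justified.

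First, your appeal to \cref{thm:associated_primes_of_cellular} to produce the face $\sigma_{\mathfrak p}$ is misplaced. That theorem describes the \emph{associated} primes of the cellular ideal $I$, whereas here $\mathfrak p$ ranges over \emph{all} primes of $\field[x]/I$. Nothing in \cref{thm:associated_primes_of_cellular} tells you that, for an arbitrary prime $\mathfrak p$, the set $\{A_i:\overline{x_i}\notin\mathfrak p\}$ spans a face of $\N\genset$. This is precisely the content of the paper's \cref{lem:injective_module_with_face}, whose proof genuinely uses the lattice relations coming from $I$ to rule out the possibility that the ``invertible'' generators sit strictly inside a larger face.

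Second, in the case $\sigma_{\mathfrak p}\notin\subcpx$ you assert that $\sigma_{\mathfrak p}$ is the unique maximal element of $\Gamma_{\mathfrak p}$ and then invoke a radial retraction. But $\sigma_{\mathfrak p}$ is a face of $\affinesemigp$, not of $\crosssection_{\ridI}$; you are tacitly assuming that $\virtual{\,\cdot\,}$ sets up a bijection between faces of $\crosssection_{\ridI}$ and faces of $\affinesemigp$ not in $\subcpx$, and that $\Gamma_{\mathfrak p}$ is the full face lattice of a single face of $\crosssection_{\ridI}$. Neither of these is automatic: the transverse section of the ideal polyhedron can in principle have a more intricate face structure, and one must check that $\{\facG\in\facelattice(\crosssection_{\ridI}):\virtual{\facG}\subseteq\sigma_{\mathfrak p}\}$ has a unique maximum. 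The paper isolates this as \cref{lem:closed_subpolytope}, proving that $\crosssection_{\ridI}\cap\realize\sigma_{\mathfrak p}$ is a single face of $\crosssection_{\ridI}$ and that its face lattice is exactly your $\Gamma_{\mathfrak p}$. Once that is established, contractibility is immediate (it is a polytope), and your ``boundary removed'' picture is unnecessary.
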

\begin{proof}
It suffices to check the case when $M$ is an injective indecomposable module $E(\field[x]/P)$ over a prime ideal $P$ containing $I$. Let $\genset_{i}$ be the $i$-th column of $\genset$. 
The set $\facF:=\{A_{i}\mid x_{i}E(\field[x]/P) \cong E(\field[x]/P)\}$ is called the \emph{face corresponding to $P$}. ~\cref{lem:injective_module_with_face} shows that this is indeed a face of $\N\genset$. \cref{lem:tensoring_injective_module} shows that $\ishida \tensor{\field[x]/I}E(\field[x]/P)$ is exact using \cref{lem:closed_subpolytope}.

\end{proof}

We start verifying our proposed face is a face.

\begin{lemma}
\label{lem:injective_module_with_face}
Given a monomial prime ideal $P$ containing $I$, $\facF':=\{A_{i}\mid x_{i}E(\field[x]/P) \cong E(\field[x]/P)\}$ is a face of $\N\genset$.
\end{lemma}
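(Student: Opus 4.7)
The plan is first to identify $\facF'$ concretely in terms of which $x_i$ lie in $P$, and then to combine this with the hypothesis $I \subseteq P$ and the monomial-prime structure of $P$ to extract a lattice-theoretic condition that forces $\facF'$ to be a face of $\N\genset$.

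\textit{Identifying $\facF'$.} If $j \in \cellvar^c$ then $x_j$ is nilpotent modulo $I$, hence acts nilpotently on $E := E(\field[x]/P) \neq 0$, which precludes $x_j E \cong E$. For $i \in \cellvar$, standard properties of the indecomposable injective yield $\ass(E) = \{P\}$ and the module structure of $E$ over the localization $(\field[x]/I)_P$; together these give that multiplication by $x_i$ is an automorphism of $E$ (hence $x_i E \cong E$) iff $x_i \notin P$. Writing $P = \<x_k : k \in \setS\>$ and noting $\cellvar^c \subseteq \setS$ (all nilpotent variables lie in every prime over $I$), let $\setT := \cellvar \setminus \setS$. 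Then $\facF' = \{A_i : i \in \setT\}$.

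\textit{A lattice condition from $I \subseteq P$.} Let $L$ be the lattice underlying the lattice ideal $I \cap \field[\N^\cellvar]$, with character $\character$. For each $w \in L$ the binomial $\varx^{w^+} - \character(w)\,\varx^{w^-}$ lies in $I \subseteq P$. Because $P$ is a monomial prime and $\character(w) \in \field^\ast$, membership in $P$ forces $\varx^{w^+} \in P \Longleftrightarrow \varx^{w^-} \in P$, equivalently $\operatorname{supp}(w^+) \subseteq \setT \Longleftrightarrow \operatorname{supp}(w^-) \subseteq \setT$. If both supports lie in $\setT$, reducing modulo $P$ in the polynomial ring $\field[x]/P \cong \field[x_i : i \in \setT]$ yields $\varx^{w^+} = \character(w)\,\varx^{w^-}$, which forces equality of monomials and so $w^+ = w^-$; their supports being disjoint, $w^+ = w^- = 0$ and $w = 0$. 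Consequently every nonzero $w \in L$ has neither $\operatorname{supp}(w^+)$ nor $\operatorname{supp}(w^-)$ contained in $\setT$. This extends to $L_J$ by saturation: any nonzero $c \in L_J$ admits $N \geq 1$ with $Nc$ a nonzero element of $L$, and $(Nc)^\pm$ has the same support as $c^\pm$.

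\textit{Face conclusion via Farkas.} Using $L_J = \ker \genset \cap \Z^\cellvar$, LP duality shows that $\{A_i : i \in \setT\}$ fails to be a face of $\R_{\geq 0}\genset$ precisely when there is a nonzero $c \in L_J$ with $c_j \geq 0$ for all $j \in \cellvar \setminus \setT$ and $c_j > 0$ for some such $j$—equivalently, a nonzero $c \in L_J$ with $\operatorname{supp}(c^-) \subseteq \setT$ and $\operatorname{supp}(c^+) \not\subseteq \setT$. The preceding step rules out any such $c$, so $\facF'$ is a face of $\N\genset$. The main obstacle lies in the second step, where one must leverage both the binomial structure of a cellular ideal and the polynomial-ring identification $\field[x]/P \cong \field[x_i : i \in \setT]$ to pass from $I \subseteq P$ to the required lattice-support condition; the remaining extension to $L_J$ and the polyhedral conclusion are then routine.
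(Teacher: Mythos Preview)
Your argument is correct and takes a genuinely different route from the paper. The paper proceeds by contradiction on the module side: assuming $F'$ is not a face, it picks the minimal face $F$ whose relative interior meets that of $\realize F'$, chooses $A_j\in F\setminus F'$, and manufactures a monomial relation in $\field[x]/I$ expressing some $x^{N_2\mathbf f}$ (with $\mathbf f\in\relint(\N F')$) as a product involving $x_j$; since the $x_i$ with $A_i\in F'$ act as automorphisms on $E(\field[x]/P)$ while $x_j$ acts locally nilpotently, this yields a contradiction directly on the injective. You instead translate everything into the lattice $L$ (and its saturation $L_J=\ker_\Z A$): the containment $I\subseteq P$ forces, for every nonzero $w\in L_J$, that neither $\operatorname{supp}(w^+)$ nor $\operatorname{supp}(w^-)$ lies in $T$, and then a Farkas/Stiemke alternative converts ``$\{A_i:i\in T\}$ is a face'' into precisely the nonexistence of such a $w$. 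The paper's approach is shorter and stays close to the module-theoretic picture that is actually used downstream (in \cref{lem:tensoring_injective_module}), at the cost of a somewhat informal treatment of relative interiors and of the phrase ``$x_iE=0$'' (which should really be read as ``$x_i$ acts locally nilpotently''). Your approach is more explicit and self-contained, and has the virtue of making the polyhedral content transparent: the obstruction to being a face is literally a lattice vector whose existence is ruled out by the binomial relations in $I$; this also makes clear why passing from $L$ to $L_J$ is harmless. One small comment: your sentence ``membership in $P$ forces $x^{w^+}\in P\Longleftrightarrow x^{w^-}\in P$'' understates what happens---for a nonzero binomial in a monomial ideal both terms must lie in the ideal---but your subsequent reduction modulo $P$ recovers the needed conclusion regardless.
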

\begin{proof}
Since $\ass(E(\field[x]/P)) =\{P\}$ and the module is indecomposable, $x_{i}E(\field[x]/P)$ is either $0$ if $x_{i} \in P$ or $E(\field[x]/P)$ if $x_{i}\not\in P$. Now suppose that the given set $\facF'$ is not a face; then there exists a minimal face $\facF$ whose relative interior intersects with the relative interior of $\facF'$. Pick $A_{j} \in \facF \setminus \facF'$. Then the corresponding variable $x_{j}$ induces the zero map on $E(\field[x]/P)$. If $A_{j}$ is not in the relative interior of $\facF$, let $\vecf \in \relint(\N\facF)$ so that $\vecf= \sum_{A_{i} \in \facF'} c_{i}A_{i}$ for some $c_{i} \in \N$. 
Choose a suitable $N_{1} \in \N$ such that $N_{1}\vecf=\sum_{A_{i} \in \facF'} c'_{i}A_{i} + dA_{j}$ for some nonnegative $c'_{i}\in \Q$  and $d \in \Q_{>0}$. By construction, $N_{1}\vecf$ is in the lattice $(L_{\rho},\rho)$. 
Thus, there exists $N_{2}>N_{1}$ such that $N_2\vecf = \sum_{A_{i} \in \facF'} d_{i}A_{i} + d'A_{j}$ where $d_{i},d\in \N$, $d>0$. 
But then $x^{N_{2}\vecf} E(\field[x]/P) = 0$, which is a contradiction. 
If $A_{j}$ is in the relative interior of $\facF$, a similar argument gives another contradiction. 
\end{proof}

The following is necessary to prove exactness.

\begin{lemma}
\label{lem:closed_subpolytope}
Given a face $\facF \in \facelattice(\N\genset)$, $\crosssection_{\ridI}^{\cap\facF}:= \crosssection_{\ridI} \cap \realize\facF$ is a face of $\crosssection_{\ridI}$.
\end{lemma}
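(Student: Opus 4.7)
The plan is to exploit the fact that $\crosssection_{\ridI}$ is a transverse section of the polyhedron $P := \realize\{u \in \Z^{\cellvar} \mid x^{u} \in \ridI\}$, which sits inside the ambient cone $\realize\genset$, together with the standard convex-geometric fact that intersecting a polyhedron with a face of an ambient cone containing it produces a face of the polyhedron.

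First I would verify that $P \subseteq \realize\genset$, once one identifies ambient spaces by pushing $P$ forward from $\R^{\cellvar}$ to $\R^{d}$ via the linear map $\genset$. Indeed, each generator $u$ of $P$ satisfies $x^{u} \in \ridI \subset \field[\affinesemigp]$, so $\genset u \in \affinesemigp \subseteq \realize\genset$, and the conic hull of a subset of the closed convex cone $\realize\genset$ stays in $\realize\genset$.

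Next I would use that $\realize\facF$, as a face of $\realize\genset$, is cut out by a supporting linear functional: there exists $\ell \in (\R^{d})^{*}$ with $\ell \geq 0$ on $\realize\genset$ and $\realize\facF = \{x \in \realize\genset : \ell(x) = 0\}$. Restricting $\ell$ to $P$, one still has $\ell \geq 0$, and $P \cap \realize\facF = \{x \in P : \ell(x) = 0\}$ is exactly the face of $P$ on which $\ell$ attains its minimum value $0$. Hence $P \cap \realize\facF$ is a face of $P$ (possibly the empty face).

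Finally, since $\crosssection_{\ridI} = P \cap H$ for a transverse hyperplane $H$ and the transverse-section operation gives the bijection $\facG \mapsto \facG \cap H$ between the faces of $P$ and the faces of $\crosssection_{\ridI}$, one concludes
\[
\crosssection_{\ridI} \cap \realize\facF \;=\; (P \cap H) \cap \realize\facF \;=\; (P \cap \realize\facF) \cap H,
\]
which is the transverse slice of the face $P \cap \realize\facF$ of $P$, hence a face of $\crosssection_{\ridI}$. The main obstacle is the bookkeeping required to make the ambient-space identification precise, since $P$ is naturally a subset of $\R^{\cellvar}$ while $\realize\facF$ lies in $\R^{d}$; once this is handled via $\genset$, the argument reduces to the standard convex geometry sketched above.
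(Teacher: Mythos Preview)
Your argument is correct and takes a genuinely different, more direct route than the paper. The paper proceeds combinatorially: it first shows that a face $\facG$ of $\crosssection_{\ridI}$ lies in $\realize\facF$ if and only if $\virtual{\facG}\subseteq\facF$, then proves that the collection $\{\facG\in\facelattice(\crosssection_{\ridI}):\virtual{\facG}\subseteq\facF\}$ has a unique maximal element $\facH$ (via a join argument in the face lattice), and finally that every element of this collection is a face of $\facH$, so that $\crosssection_{\ridI}^{\cap\facF}=\facH$. Your supporting-functional argument bypasses the lattice manipulations entirely: once $\crosssection_{\ridI}\subseteq\realize\genset$ and $\realize\facF$ is a face of $\realize\genset$, the conclusion is a one-line application of standard convex geometry. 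In fact you can shorten it further: the detour through $P$ and the transverse-slice correspondence is unnecessary, since the functional $\ell$ can be applied directly to the polytope $\crosssection_{\ridI}$ itself.

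One point worth noting: the paper's proof is doing double duty. The characterization $\facG\in\facelattice(\crosssection_{\ridI}^{\cap\facF})\Longleftrightarrow\virtual{\facG}\subseteq\facF$ that it establishes along the way is quoted verbatim in the proof of the next lemma (\cref{lem:tensoring_injective_module}). Your argument proves the lemma as stated but does not yield this equivalence; you would have to supply it separately. This is not hard once $\crosssection_{\ridI}^{\cap\facF}$ is known to be a face---one direction is immediate from $\crosssection_{\ridI}^{\cap\facF}=\crosssection_{\ridI}\cap\realize\facF$, and the other is exactly the paper's first claim---but it should be recorded explicitly if you want your proof to slot into the surrounding argument.
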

\begin{proof}
If $\facF=\N\genset$, then the statement is clear. 
Assume $\dim\facF < \dim\N\genset$. 
First, we claim that for any $\facG \in \facelattice(\crosssection_{\ridI})$, $\facG \subseteq \realize\facF$ if and only if $\virtual{\facG} \subseteq \facF$. One direction follows straight from the definition of $\virtual{\facG}$. Conversely, assume that $\virtual{\facG} \not\subseteq \facF$. Then, $\relint(\realize\virtual{\facG}) \cap \realize\facF = \emptyset$ implies $\relint(\facG)  \cap \realize\facF = \emptyset$. Therefore $\facG \not\subseteq \realize\facF$. This claim shows that $\crosssection_{\ridI}^{\cap\facF}$ is the union of all faces $\facG \in \facelattice(\crosssection_{\ridI})$ such that $\virtual{\facG} \subseteq \facF$. Thus $\crosssection_{\ridI}^{\cap\facF}$ can be regarded as a realized subcomplex of $\facelattice(\crosssection_{\ridI}^{\cap\facF}):=\{\facG \in \facelattice(\crosssection_{\ridI}): \virtual{\facG} \subseteq \facF\}$.

Next, we claim that $\facelattice(\crosssection_{\ridI}^{\cap\facF})$ has a unique maximal element. Suppose not; let $\facG_{1}$ and $\facG_{2}$ be two distinct faces of $\facelattice(\crosssection_{\ridI}^{\cap\facF})$ of maximal dimension. Then, $\facF \supseteq \virtual{\facG_{1}} + \virtual{\facG_{2}}$ implies that there is a face $\virtual{\facG_{1}} \vee \virtual{\facG_{2}} \in \facelattice(\N\genset)$ such that $\facF \supseteq \virtual{\facG_{1}} \vee \virtual{\facG_{2}}$, the join of the two faces. 
Since $\facG_{1}$ and $\facG_{2}$ are distinct and the same dimension, $\facG_{1} \vee\facG_{2} \neq \facG_{1}$ or $\facG_{2}$. Therefore $\facG_{1} \vee\facG_{2} \supsetneq \facG_{1} \cup \facG_{2}$. Hence, $\realize(\virtual{\facG_{1}} \vee \virtual{\facG_{2}}) \supseteq \realize(\virtual{\facG_{1} \vee \facG_{2}})$, which implies $\facF \supseteq \virtual{\facG_{1}} \vee \virtual{\facG_{2}} \supseteq \virtual{\facG_{1} \vee \facG_{2}}$. Hence, $\facG_{1} \vee \facG_{2} \subseteq \realize\facF$, therefore $\facG_{1} \vee \facG_{2} \in \facelattice(\crosssection_{\ridI}^{\cap\facF})$, contradicting the maximality of $\facG_{1}$ and $\facG_{2}$. We conclude $\facelattice(\crosssection_{\ridI}^{\cap\facF})$ has a unique maximal element, say $\facH$.

Lastly, we claim that $\facelattice(\facH) = \facelattice(\crosssection_{\ridI}^{\cap\facF})$, which implies $\crosssection_{\ridI}^{\cap\facF} =H$. For any $\facG \in \facelattice(\crosssection_{\ridI}^{\cap\facF})$, let $\facG':= \facG \vee \facH$ in $\facelattice(\crosssection_{\ridI})$. Then, $\emptyset \neq \relint(\virtual{\facG'}) \cap \realize(\virtual{\facG} \cup \virtual{\facH})  \subseteq \relint(\virtual{\facG'}) \cap \realize\facF  \implies \virtual{\facG'} \subseteq \facF \implies \facG' \in \facelattice(\crosssection_{\ridI}^{\cap\facF})$. By the maximality of $\facH$, $\facG' = \facH$, which implies $\facG \subseteq \facH$.
\end{proof}

\begin{lemma}
\label{lem:tensoring_injective_module}
Given a monomial prime ideal $P$ whose corresponding face is $\facF$, for $k \geq 1$,
\[
\ishida^{\smallk} \tensor{\field[x]/I} \injmodule(\field[x]/P)  =  \bigoplus_{\facG \in \facelattice\left(\crosssection_{\ridI}^{\cap\facF}\right)^{\smallk-1}}\injmodule(\field[x]/P) \cong \Hom_{\Z}\left( \rchaincpx{\crosssection_{\ridI}^{\cap\facF}}(-1), \injmodule\left(\field[x]/P\right)\right)
\]
where $\rchaincpx{\crosssection_{\ridI}^{\cap\facF}}$ is the reduced chain complex of $\crosssection_{\ridI}^{\cap\facF}$ as a CW complex.
\end{lemma}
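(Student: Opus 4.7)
The plan is to unfold the definition of $\ishida^{\smallk}$, compute the tensor product summand by summand, and then reorganize the resulting direct sum. Starting from
\[
\ishida^{\smallk} \tensor{\field[x]/I} \injmodule(\field[x]/P) = \bigoplus_{\facG \in \facelattice(\crosssection_{\ridI})^{\smallk-1}} \left(\field[x]/I\right)_{\virtual{\facG}} \tensor{\field[x]/I} \injmodule(\field[x]/P),
\]
the heart of the argument is to identify each summand with $\injmodule(\field[x]/P)$ when $\virtual{\facG} \subseteq \facF$ and with $0$ otherwise.

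For this computation I will use the fact that, by indecomposability of $\injmodule(\field[x]/P)$, each variable $x_i$ (for $i \in \cellvar$) acts either as zero or as a $\field$-linear isomorphism, and the latter precisely when $\genset_i \in \facF$. If $\virtual{\facG} \not\subseteq \facF$, then there exists a column $\genset_j \in \virtual{\facG} \setminus \facF$, so $x_j$ lies in the multiplicative set defining the localization at $\virtual{\facG}$ and acts as zero on $\injmodule(\field[x]/P)$, forcing the localization to vanish. If $\virtual{\facG} \subseteq \facF$, then every monomial $x^{\vecu}$ in the multiplicative set has $A\vecu \in \N\virtual{\facG}$, so we can write $A\vecu = \sum_{\genset_i \in \virtual{\facG}} c_i \genset_i$ with $c_i \in \N$; setting $\vecv = \sum c_i \vece_i$, the element $\vecu - \vecv$ lies in $L_{\character}$, and the defining relations of $I(\character)$ give $x^{\vecu} = \character(\vecu-\vecv) x^{\vecv}$ modulo $I$. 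Hence $x^{\vecu}$ differs from a product of variables $x_i$ with $\genset_i \in \virtual{\facG} \subseteq \facF$ by a unit, so it acts as an isomorphism on $\injmodule(\field[x]/P)$, showing the localization equals $\injmodule(\field[x]/P)$.

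Having established this, I apply~\cref{lem:closed_subpolytope} to identify $\{\facG \in \facelattice(\crosssection_{\ridI}) : \virtual{\facG} \subseteq \facF\}$ with $\facelattice(\crosssection_{\ridI}^{\cap\facF})$. This immediately gives
\[
\ishida^{\smallk} \tensor{\field[x]/I} \injmodule(\field[x]/P) \cong \bigoplus_{\facG \in \facelattice\left(\crosssection_{\ridI}^{\cap\facF}\right)^{\smallk-1}} \injmodule(\field[x]/P),
\]
which is the first displayed equality. For the second isomorphism, I observe that $\rchaincpx{\crosssection_{\ridI}^{\cap\facF}}(-1)$ in homological degree $\smallk$ is the free abelian group on $\facelattice(\crosssection_{\ridI}^{\cap\facF})^{\smallk-1}$, and a finite direct sum of copies of $\injmodule(\field[x]/P)$ indexed by a basis of a free abelian group is naturally identified with the $\Hom_{\Z}$ out of that group.

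The main obstacle is the case analysis for the localized tensor product in step two, specifically the reduction via the partial character $\character$ that lets one replace an arbitrary monomial $x^{\vecu}$ of the multiplicative set by a monomial supported only on variables indexed by $\virtual{\facG}$. The identification of face sublattices via~\cref{lem:closed_subpolytope} and the translation to the $\Hom$ description are then formal once this local computation is in hand.
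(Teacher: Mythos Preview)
Your overall strategy matches the paper's: compute each summand $(\field[x]/I)_{\virtual{\facG}}\otimes \injmodule(\field[x]/P)$, invoke \cref{lem:closed_subpolytope} to reindex the survivors by $\facelattice(\crosssection_{\ridI}^{\cap \facF})$, and then pass to $\Hom_\Z$. The paper handles the summand computation by citing \cref{lem:injective_module_with_face}; you instead spell out a direct argument, and your formulation also absorbs the paper's separate case ``$\facF$ is not of the form $\virtual{\facF'}$''.

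There is, however, a gap in your case $\virtual{\facG}\subseteq \facF$. After writing $A\vecu=\sum_{A_i\in\virtual{\facG}}c_iA_i$ and setting $\vecv=\sum c_i\vece_i$, you assert $\vecu-\vecv\in L_\character$. But the matrix $A$ is attached to the minimal prime $J$, so $A\vecu=A\vecv$ only yields $\vecu-\vecv\in\ker A=(L_\character)_{\text{sat}}$; in general $\vecu-\vecv\notin L_\character$, and the relation $x^\vecu=\character(\vecu-\vecv)x^\vecv$ need not hold in $\field[x]/I$. The detour through $\character$ is in fact unnecessary: since $\virtual{\facG}$ is a \emph{face} of $\realize\affinesemigp$, there is a linear functional $\ell$ vanishing on $\virtual{\facG}$ and strictly positive on every column $A_i\notin\virtual{\facG}$; then $A\vecu\in\N\virtual{\facG}$ with $\vecu\in\N^\cellvar$ forces $\sum_i u_i\,\ell(A_i)=0$ with each term nonnegative, hence $u_i>0$ only when $A_i\in\virtual{\facG}\subseteq\facF$. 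Thus $x^\vecu$ is already a product of variables acting invertibly on $\injmodule(\field[x]/P)$, and no rewriting modulo $I$ is needed. (Alternatively, your argument is salvaged by passing to $N\vecu$ for $N$ large enough that $N(\vecu-\vecv)\in L_\character$, which suffices since inverting $x^\vecu$ and $x^{N\vecu}$ give the same localization.)
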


\begin{proof}
\cref{lem:injective_module_with_face} shows that for any $\facF, \facG \in \facelattice(\N\genset)$,
\[
\injmodule(\field[x]/P) \tensor{\field[x]/I} \left(\field[x]/I\right)_{\facG} =  \begin{cases} 0 & \text{ if } \facG  \not\subseteq \facF \\  \injmodule(\field[x]/P)  & \text{ if } \facG \subseteq \facF \end{cases}.
\]
If $\facF$ is not the image of a face in $\crosssection_{\ridI}$, then no sub-face of $\facF$ is the image of a face of $\crosssection_{\ridI}$. 
Otherwise, there is a face $\facG \subseteq \facF$ containing an unbounded face of $\realize\{u \in \Z^{\cellvar} \mid x^{u} \in \ridI\}$.
Then by the correspondence between radical monomial ideals and subcomplexes of $\facelattice(\affinesemigp)$, $\relint(\facF)$ contains an element of an unbounded face of $\realize\{u \in \Z^{\cellvar} \mid x^{u} \in \ridI\}$, a contradiction. Therefore, no images of faces are subsets of $\facF$. This implies that $\crosssection_{\ridI}^{\cap\facF}=0$ and $\ishida^{\smallk} \tensor{\field[\N\genset]} \injmodule(\field[x]/P) =0$ for $\smallk \geq 1$. 

Otherwise, $\facF:= \virtual{\facF'}$ for some $\facF' \in \facelattice(\crosssection_{\ridI})$. Since $\virtual{\facG'} \subseteq \facF$ if and only if $\facG' \in \facelattice(\crosssection_{\ridI}^{\cap\facF})$ by~\cref{lem:closed_subpolytope}, so the first equality holds. 

Now observe that $\Hom_{\Z}( \Z, \injmodule(\field[x]/P))\cong\injmodule(\field[x]/P)$ as a $\field[\N\genset]$-module. 
Thus,
\begin{align*}
\ishida^{\bullet} \tensor{\field[\N\genset]} \injmodule(\field[x]/P) = \bigoplus_{\facG \in \facelattice(\crosssection_{\ridI}^{\cap\facF})^{\bullet-1}}\injmodule(\field[x]/P) \cong \bigoplus_{\facG \in \facelattice(\crosssection_{\ridI}^{\cap\facF})^{\bullet-1}}\Hom_{\Z} ( \Z, \injmodule(\field[x]/P) )  
\\  \cong \Hom_{\Z} \left( \bigoplus_{\facG \in \facelattice(\crosssection_{\ridI}^{\cap\facF})^{\bullet-1}}\Z, \injmodule(\field[x]/P) \right) = \Hom_{\Z} \left( \rchaincpx{\crosssection_{\ridI}^{\cap\facF}}(-1), \injmodule(\field[x]/P) \right).
\end{align*}
\end{proof}

\section{Local cohomology with monomial support for cellular binomial ideals}
\label{sec:Hochster}

In this section we express the Hilbert series of the local cohomology with monomial support of $\field[x]/I$ as a (formal) finite sum of rational functions when $I$ is a lattice ideal (Theorem~\ref{thm:hochster_lattice}) or a cellular binomial ideal (Theorem~\ref{thm:hochster_cellular}).
As a corollary, we provide a generalization of Reisner's criterion to the context of cellular binomial ideals, which gives a Cohen-Macaulay characterization for $\field[x]/I$ in terms of the cohomology of finitely many chain complexes (Corollary~\ref{cor:reisner}).
Let $(I,J,A)$ be a tuple consisting of a lattice ideal (resp. cellular binomial ideal), a minimal prime ideal $J$ of $I \cap \field[\N^{\cellvar}]$, and the corresponding affine semigroup $\N\genset=\affinesemigp$. 
Then $J$ is also a prime lattice ideal and we may assume after rescaling the variables that
$J=I(\xi)$ is toric, with lattice $L_{\xi} = (L_{\rho})_{\text{sat}}$. Let $T:= L_{\xi}/L_{\rho}$ be the corresponding torsion abelian group, then
\[
\Z^{d}/L_{\rho} \cong T \oplus \Z\genset.
\]
We may induce a fine grading of $\field[x]/I$ by $T \oplus \Z\genset$ as follows: for any $\overline{x^{\vecu}} \in \field[x]/I$ for some $\vecu \in \Z^{d}$, $\deg_{T,\genset}(\overline{x^{\vecu}}) := ( \vecu+L_{\rho} ,\genset \cdot \vecu)$. Here we use $\overline{x^{\vecu}}$ to indicate the image of $x^{\vecu} \in \field[x]$ in $\field[x]/I$.

Let $\monoideal$ be a monomial ideal of an affine semigroup ring $\field[\affinesemigp]$. 
Let $\deg(\field[\affinesemigp]/\monoideal):=\{ \veca \in \Z^{d}\mid (\field[\affinesemigp]/\monoideal)_{\veca} \neq 0 \}$. 
A \emph{proper pair} $(\veca,\facF)$ of $I$ is a pair such that $\veca + \N\facF \subset \deg(\field[\affinesemigp]/\monoideal)$. 
Given two pairs $(\veca,\facF)$ and $(\vecb,\facG)$, we say $(\veca,\facF)<(\vecb,\facG)$ if $\veca + \N\facF \subset \vecb + \N\facG$. 
A \emph{degree pair} of $\monoideal$ is a maximal element of the set of all proper pairs with the given order. 
Two pairs $(\veca,\facF)$ and $(\vecb,\facF)$ with the same face $\facF$ \emph{overlap} if the intersection $(\veca+\N\facF) \cap (\vecb+\N\facF)$ is nonempty. 
Overlapping is an equivalence relation on pairs; an \emph{overlap class} $[\veca,\facF]$ is an equivalence class containing the degree pair $(\veca,\facF)$. Let $\bigcup[\veca,\facF]$ be the set of all degrees belonging to $\veca'+\N\facF$ for some degree pair $(\veca',\N\facF)$ in $[\veca,\facF]$. 
The \emph{original degree space} $\bigcup\deg(\field[\affinesemigp]/\monoideal)$ is the union of $\deg((\field[\affinesemigp]/\monoideal)_{P})$ for all monomial prime ideals $P$ of $\field[\affinesemigp]$. 
The \emph{degree pair topology} is the smallest topology on $\bigcup\deg(\field[\affinesemigp]/\monoideal)$ such that for any overlap class $[\veca,\facF]$ of any localization $(\field[\affinesemigp]/\monoideal)_{P}$, the set $\bigcup[\veca,\facF]$ is both open and closed. 
These notions were introduced in~\cites{STV95,STDPAIR,MY2022}.

In this article, we extend the notion of degree space further. 
Suppose that $\arngmt$ is the \emph{hyperplane arrangement} consisting of the supporting hyperplanes of the facets of $\R_{\geq 0}\affinesemigp$. Let $\regionr{\arngmt}$ be the set of regions of $\arngmt$, where a \emph{region} is a connected components of $\R^{d} \setminus \bigcup_{\hplane \in \arngmt}\hplane$~\cite{Stanley07}. Then, for any region $\regr \in \regionr{\arngmt}$, regard $\regr \cap (\Z^{d} \minus \bigcup\deg(\field[\affinesemigp]/\monoideal))$ as a space with the trivial topology. 
The \emph{extended degree space} $\Z\affinesemigp$ of $\monoideal$ is the disjoint union $\Z\affinesemigp= \left(\bigcup\deg(\field[\affinesemigp]/\monoideal)\right) \cup \big( \bigcup_{\regr \in \regionr{\arngmt}}\regr \cap (\Z^{d} \minus \bigcup\deg(\field[\affinesemigp]/\monoideal) \big)$ as a topological space. (As a set, $\Z \affinesemigp$ equals $\Z^d$.)
Lastly, let $\mathcal{G}(\Z\affinesemigp)$ (resp. $\mathcal{G}(\field[\affinesemigp]/\monoideal)$) be the collection of all minimal open sets of the extended (resp. original) degree space of $\monoideal$. 
Since $\mathcal{G}(\field[\affinesemigp]/\monoideal)$ is finite~\cite{MY2022}*{Lemma 4.3} and $\regionr{\arngmt}$ is finite, 
$\mathcal{G}(\Z\affinesemigp)$ is also finite.

Given $t:=\vecu+L_{\rho}$, let $\monoideal_{t}$ be the following ideal in $\field[\affinesemigp]$ 
\[
\monoideal_{t}:=\<x^{\genset \cdot \vecu} \mid \deg_{T,\genset}(\overline{x^{\vecu}}) = (t,A\cdot \vecu) \text{ and } \overline{x^{\vecu}} \in \field[x]/\monoideal \>.
\]
For an open set $O \in \mathcal{G}(\field[x]/\monoideal_{t})$, let $\rchaincpx{O}$ be the graded part of the generalized Ishida complex associated to the element $\overline{x^{\vecu}}$ whose torsion degree is $t$ and whose $A$-degree is $A \vecu \in O$. This is well-defined  regardless of choice of $\overline{x^{\vecu}}$, as is stated below.

\begin{lemma}
\label{lem:grain_chaff}
If $\overline{x^{\vecu}}$ and $\overline{x^{\vecv}}$ with the same torsion degree are in the same minimal open set of the extended degree space of $\monoideal_{t}$, then their corresponding graded parts of the Ishida complex coincide.
\end{lemma}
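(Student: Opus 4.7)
The plan is to reduce the claim to the analogous statement for the affine semigroup ring $\field[\affinesemigp]$ modulo the ordinary monomial ideal $\monoideal_t$, where the key machinery is developed in~\cite{MY2022}, and then to extend it from the original to the extended degree space using the hyperplane arrangement.

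First, for each torsion class $t \in T$ I would identify the $t$-graded subspace $(\field[x]/I)_t$ with a shift of $\field[\affinesemigp]/\monoideal_t$ as $\N\genset$-graded $\field[\affinesemigp]$-modules, via the map $\overline{x^{\vecu}} \mapsto x^{\genset \vecu}$. This assignment is well-defined on each $\genset$-degree up to a scalar coming from $\character$, since any two monomials in $\field[x]/I$ with the same torsion degree and the same $\genset$-degree differ by an element of $L_{\rho}$; its image at degree $\veca$ is one-dimensional exactly when there exists $\vecu$ with $\vecu + L_{\rho}=t$ and $\genset\vecu = \veca$ such that $\overline{x^{\vecu}} \neq 0$, which by the definition of $\monoideal_{t}$ is the condition that $x^{\veca}$ is a standard monomial of $\field[\affinesemigp]/\monoideal_t$. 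Because the localization $(\field[x]/I)_{\virtual{\facF}}$ inverts the same multiplicative set of $\genset$-graded monomials as $(\field[\affinesemigp]/\monoideal_t)_{\virtual{\facF}}$, this identification extends face by face, with matching incidence signs, so the $(t,\genset\vecu)$-graded piece of the generalized Ishida complex for $\field[x]/I$ is canonically identified with the $(\genset\vecu)$-graded piece of the generalized Ishida complex for $\field[\affinesemigp]/\monoideal_t$.

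The problem thus reduces to showing that the $\veca$-graded part of the generalized Ishida complex of $\field[\affinesemigp]/\monoideal_t$ depends only on the minimal open set of the extended degree space of $\monoideal_t$ containing $\veca$. I would split into two cases. For $\veca$ in the original degree space $\bigcup\deg(\field[\affinesemigp]/\monoideal_t)$, this is the content of the degree-pair topology: for each face $\virtual{\facF}$, the piece $((\field[\affinesemigp]/\monoideal_t)_{\virtual{\facF}})_{\veca}$ is nonzero if and only if $\veca$ lies in the union $\bigcup[\vecc,\virtual{\facF}]$ of an overlap class of degree pairs, and the simultaneous refinement over all faces and all localizations is by construction the partition into elements of $\mathcal{G}(\field[\affinesemigp]/\monoideal_t)$. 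For $\veca$ in the extended part $\Z^{d} \minus \bigcup\deg(\field[\affinesemigp]/\monoideal_t)$, every face localization $(\field[\affinesemigp]/\monoideal_t)_{\virtual{\facF}}$ vanishes at degree $\veca$ (otherwise $\veca$ would lie in the original degree space), so the graded piece of the Ishida complex is identically zero, hence trivially constant on each region of the hyperplane arrangement.

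The main obstacle I anticipate is the clean verification in the first step that face localization commutes with extraction of the $t$-torsion component and preserves the natural injections defining the differentials of $\ishida^{\bullet}$. Once this bookkeeping is carried out, the conclusion follows from the constancy property of overlap classes underlying the degree-pair topology, together with the observation that the hyperplane-arrangement decomposition only matters on the complement of $\bigcup\deg(\field[\affinesemigp]/\monoideal_t)$, where the Ishida chain complex is identically trivial.
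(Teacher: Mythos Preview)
There is a genuine error in your reduction step. By the definition of $\monoideal_t$ in the paper, $x^{A\vecu}$ is a \emph{generator} of $\monoideal_t$ precisely when there exists $\vecu$ with torsion class $t$ and $\overline{x^{\vecu}}\neq 0$; thus the $t$-graded piece of $\field[x]/I$ in $A$-degree $\veca$ is nonzero exactly when $x^{\veca}\in \monoideal_t$, not when $x^{\veca}$ is a standard monomial of $\field[\affinesemigp]/\monoideal_t$. In other words, the $t$-component of $\field[x]/I$ is the ideal $\monoideal_t$ (as a graded module), and your identification with the quotient $\field[\affinesemigp]/\monoideal_t$ is inverted.

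This inversion breaks your treatment of the extended part. Outside the original degree space one has, for every face $\facF$, either $\veca\notin \affinesemigp-\N\facF$ or $\veca\in\deg(\monoideal_t\cdot\field[\affinesemigp-\N\facF])$; in the latter case the $\facF$-component of the graded Ishida complex is \emph{nonzero}, not zero. A minimal example already shows this: take the trivial lattice ($\field[x]/I=\field[x]$ in one variable, $T$ trivial) with $\ridI=\maxid$. Then $\monoideal_e$ is the unit ideal, the original degree space is empty, yet the Ishida complex $0\to\field[x]\to\field[x,x^{-1}]\to 0$ is nonzero in every degree of $\Z$. The paper's proof handles the extended region correctly by observing that two degrees in the same hyperplane-arrangement region lie in the same collection of localized semigroups $\affinesemigp-\N\facF$, and hence---since outside the original degree space membership in $\affinesemigp-\N\facF$ already forces membership in $\deg(\monoideal_t\cdot\field[\affinesemigp-\N\facF])$---they yield identical graded complexes.

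Your argument for degrees inside the original degree space is essentially along the right lines once the identification is corrected: one must track, for each face, whether $\veca$ lies in $\deg(\monoideal_t\cdot\field[\affinesemigp-\N\facF])$, and this is constant on minimal open sets because both the overlap-class data for $\field[\affinesemigp]/\monoideal_t$ and the ambient localization $\affinesemigp-\N\facF$ are. The paper argues this directly from overlap classes without passing through the module identification.
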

\begin{proof}
If neither $A \cdot \vecu$ nor $A \cdot \vecv$ are in the original degree space of $\monoideal_{t}$, they must be in the same region of the hyperplane arrangement. Hence, for any localization by a face $\facF$, either both degrees are  $\deg((\monoideal_{t}) \cdot\field[\N\genset -\N\facF])$ or they do not belong to the same localization.

If both are in the original degree space, let $[\veca,\facF]$ be an overlap class whose degree set $\bigcup[\veca,\facF]$ contains $A \cdot \vecu$ and $A \cdot \vecv$, and such that $F$ is minimal with this property. Then $\overline{x^{\vecu}}$ and $\overline{x^{\vecv}}$ do not appear in the localization of $\field[x]/I$ by a multiplicative set generated by variables corresponding to a proper face of $\facF$. Conversely, if there is no overlap class $[\veca,\facG]$ containing $A \cdot \vecv$, this means that $A \cdot \vecv$ appears on every localization of $\affinesemigp$ by faces containing $\facG$. This completely determines the graded parts of the Ishida complex.
\end{proof}

\begin{theorem}
\label{thm:hochster_lattice}
Given a lattice ideal $I$, define $\affinesemigp$ as before. T
he multi-graded Hilbert series for the $i$th local cohomology module of $\field[x]/I$ supported on the inverse image of the radical monomial ideal $\ridI \subset \field[\affinesemigp]$ 
with respect to the  $T \otimes \Z\genset$-grading is
\[
\hilb{\localcoho{\ridI}{i}{\field[x]/I}}{\hilbt}=\sum_{t \in T} \sum_{O \in \mathcal{G}(\field[\N\genset]/\monoideal_{t})}\dim H^{i}(\rchaincpx{O};\field) \sum_{\substack{u \in \Z^{d} \\ Au \in O}}x^{\overline{u}}.
\]
\end{theorem}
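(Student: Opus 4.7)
The plan is to apply the generalized Ishida complex of Theorem \ref{thm:Ishida_gen}, extract multigraded components, and identify them with the chain complexes $\rchaincpx{O}$.

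First, I would use Theorem \ref{thm:Ishida_gen} to identify $\localcoho{\ridI}{i}{\field[x]/I}$ with $H^i(\ishida^\bullet \tensor{\field[x]/I} \field[x]/I)$, whose terms $\ishida^k$ are direct sums of localizations $(\field[x]/I)_{\virtual{F}}$ over faces $F \in \facelattice(\crosssection_{\ridI})^{k-1}$. Because $I$ is a lattice ideal, $\field[x]/I$ carries a $(T \oplus \Z\genset)$-grading via $\deg_{T,\genset}(\overline{x^u}) = (u+L_\rho, Au)$, and each localization $(\field[x]/I)_{\virtual{F}}$ inherits this grading. The Ishida differentials are induced by canonical inclusions and so preserve multidegree. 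Hence the $(t,v)$-graded component of $H^i$ is the cohomology of the $(t,v)$-graded subcomplex of $\ishida^\bullet \tensor{\field[x]/I} \field[x]/I$.

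Next, I would fix $(t,v) \in T \oplus \Z\genset$ and analyze this subcomplex. Each summand $\left((\field[x]/I)_{\virtual F}\right)_{(t,v)}$ is either zero or one-dimensional over $\field$, spanned by the class of $\overline{x^u}$ for some $u$ with $u+L_\rho = t$ and $Au = v$; nonvanishing is controlled by whether $v$ is reachable in the $A$-grading from an element of $\field[\affinesemigp]/\monoideal_t$ after localizing at $\virtual F$, or (when $v$ lies outside $\bigcup \deg(\field[\affinesemigp]/\monoideal_t)$) by which region of the facet hyperplane arrangement contains $v$. Lemma \ref{lem:grain_chaff} guarantees that the pattern of nonvanishing summands, together with the inclusion maps between them, depends only on the minimal open set $O \in \mathcal{G}(\field[\affinesemigp]/\monoideal_t)$ of the extended degree space containing $v$, and that this graded subcomplex agrees with $\rchaincpx{O}$. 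Consequently, the $(t,v)$-graded dimension of $H^i$ equals $\dim H^i(\rchaincpx{O}; \field)$ whenever $v \in O$.

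Finally, the Hilbert series is the sum over multidegrees of these dimensions, weighted by $\hilbt^{(t,v)} = x^{\overline{u}}$ for any $u$ with $(u+L_\rho, Au) = (t,v)$. Grouping first by torsion class $t \in T$, then by minimal open set $O$ (both finite), and finally summing $\sum_{Au \in O} x^{\overline{u}}$ over the individual degrees within $O$ produces the displayed formula. The main technical obstacle lies in the second step: verifying that the componentwise maps $\incidence(\facF,\facG)\cdot\nat$ of the Ishida complex restrict degreewise to the canonical inclusions governing $\rchaincpx{O}$, so that the $(t,v)$-graded subcomplex equals $\rchaincpx{O}$ on the nose rather than merely up to isomorphism. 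This is ultimately controlled by Lemma \ref{lem:grain_chaff} together with the definition of the minimal open sets in $\Z\affinesemigp$.
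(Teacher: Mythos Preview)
Your proposal is correct and follows essentially the same approach as the paper, which states simply that the theorem is a direct consequence of Lemma~\ref{lem:grain_chaff}. You have fleshed out the details the paper leaves implicit: applying Theorem~\ref{thm:Ishida_gen}, taking graded pieces, invoking Lemma~\ref{lem:grain_chaff} to identify them with $\rchaincpx{O}$, and then summing over $t$ and $O$.
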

\begin{proof}
This is a direct consequence of \cref{lem:grain_chaff}.
\end{proof}
Since $T$ is finite and $\mathcal{G}(\field[x]/\monoideal_{t})$ is finite for all $t \in T$, the sum is finite. Moreover, each minimal open set $O \in \mathcal{G}(\field[x]/\monoideal_{t})$ is the set of lattice points in a convex polyhedron, so that $\sum_{\substack{u \in \Z^{d} \\ Au \in O}}x^{\overline{u}}$ can be written as a rational function~\cites{Barvinok99,Barvinok03}.

For the case of a $\cellvar$-cellular binomial ideal $(I,J,A)$, let $M$ be the multiplicative set consisting of monomials on the nilpotent variables of $\field[x]/I$. Then for each $m \in M$, $(I:m) \cap \field[\N^{\cellvar}]$ is a lattice ideal containing $I\cap\field[\N^{\cellvar}]$. Hence, according to \cref{thm:associated_primes_of_cellular}, we may pick an associated prime ideal $J_{m}$ of $I \cap\field[\N^{\cellvar}]$ whose extension $J_{m} \field[x] +\<x_{i} \mid i \in \cellvar^{c} \>$ is the associated prime containing $J$. Moreover, the quotient $\field[x]/(J_{m} \field[x] +\<x_{i} \mid i \in \cellvar^{c} \>)$ is isomorphic to an affine semigroup ring $\field[\N\genset_{m}]$ for some integer matrix $\genset_{m}$. The canonical projection $\field[x]/J \to \field[x]/(J_{m} \field[x] +\<x_{i} \mid i \in \cellvar^{c} \>)$ induces a monoid map $\N\genset \to \N\genset_{m}$. By letting $T_{m}$ be the quotient of the saturation of the lattice $L_{m}$ corresponding to $(I:m) \cap \field[\N^{\cellvar}]$ by $L_{m}$, we have a fine grading of $\field[x]/I$ by the abelian group
\[
\bigoplus_{m \in M}(T_{m}\oplus \Z\genset_{m})
\]
via $\deg_{M,T,\genset}(\overline{x^{\vecu}}) = (\vecu^{\cellvar^{c}}, \vecu^{\cellvar}+L_{m}, A_{m}\cdot \vecu^{\cellvar}).$ Thus, for a fixed $m \in M$ and a torsion $t \in T_{m}$, let 
\[
I_{t}:=\<x^{\genset_{m} \cdot \vecu^{\cellvar}} \mid \deg_{M,T,\genset}(\overline{x^{\vecu}}) = (\deg(m),t,A\cdot \vecu^{\cellvar}) \text{ and } \overline{x^{\vecu}} \in \field[x]/I \>.
\]
Using the same arguments as for lattice ideals, we know that two elements whose degrees are in the same minimal open set $O \in \mathcal{G}(\field[\N\genset_{m}]/I_{t})$  have the same graded part of the Ishida complex  $\rchaincpx{O}$. Then, we have
\begin{theorem}
\label{thm:hochster_cellular}
Given a cellular binomial ideal $I$, the multi-graded Hilbert series for the local cohomology module of $\field[x]/I$ supported on the image of the radical monomial ideal $\ridI \subset \field[\affinesemigp]$ with respect to the $\bigoplus_{m \in M}(T_{m}\oplus \Z\genset_{m})$-grading is
\[
\hilb{\localcoho{\ridI}{i}{\field[x]/I}}{\hilbt}=\sum_{m \in M}\sum_{t \in T_{m}} \sum_{O \in \mathcal{G}(\field[\N\genset_{m}]/I_{t})}\dim H^{i}(\rchaincpx{O};\field) \sum_{\substack{u \in \Z^{d} \\ \genset_{m}\cdot u^{\cellvar} \in O}}\overline{x^{u}}.
\]
\end{theorem}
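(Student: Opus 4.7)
The plan is to mirror the proof of Theorem \ref{thm:hochster_lattice}, using the more refined grading group $\bigoplus_{m \in M}(T_m \oplus \Z\genset_m)$ and the family of ideals $\{I_t\}_{m \in M,\, t \in T_m}$ in place of $T \oplus \Z\genset$ and $\{\monoideal_t\}_{t \in T}$ respectively.

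First, invoke Theorem \ref{thm:Ishida_gen} to identify $\localcoho{\ridI}{i}{\field[x]/I}$ with $H^{i}(\ishida^\bullet)$, where $\ishida^\bullet$ is the generalized Ishida complex of $\field[x]/I$ from Definition \ref{def:gen_ishida}. Next, observe that each summand $(\field[x]/I)_{\virtual{\facF}}$ appearing in $\ishida^\bullet$ inherits the $\bigoplus_{m \in M}(T_m \oplus \Z\genset_m)$-grading from $\field[x]/I$, and the Ishida differentials preserve this grading since they are canonical inclusions up to sign. Hence $\ishida^\bullet$ splits as a direct sum of subcomplexes indexed by multi-degree, and the multi-graded Hilbert series of $\localcoho{\ridI}{i}{\field[x]/I}$ equals the sum of the graded dimensions of the cohomology of these pieces.

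Then, prove the cellular analog of Lemma \ref{lem:grain_chaff}: for fixed $m \in M$ and $t \in T_m$, if $\overline{x^{\vecu}}, \overline{x^{\vecv}} \in \field[x]/I$ share the same $(\deg(m), t)$-grading and their $\genset_m$-degrees $\genset_m \cdot \vecu^{\cellvar}$ and $\genset_m \cdot \vecv^{\cellvar}$ lie in a common minimal open set $O \in \mathcal{G}(\field[\N\genset_m]/I_t)$, then the graded pieces of $\ishida^\bullet$ at these two multi-degrees coincide as chain complexes of $\field$-vector spaces. The argument is the verbatim analog of Lemma \ref{lem:grain_chaff}, carried out inside $\field[\N\genset_m]$ with respect to $I_t$: if both $\genset_m$-degrees lie outside the original degree space, they share a region of the facet hyperplane arrangement and thus the same face-localization profile; if both lie inside, they belong to a common minimal overlap class. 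In either case the resulting graded subcomplex of $\ishida^\bullet$ is identified with $\rchaincpx{O}(-1) \otimes_\Z \field$, whose $i$th cohomology has dimension $\dim H^i(\rchaincpx{O}; \field)$. Summing over $m$, $t$, $O$, and the lattice points $\vecu \in \Z^d$ with $\genset_m \cdot \vecu^{\cellvar} \in O$ then yields the asserted formula.

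The principal obstacle is the cellular analog of Lemma \ref{lem:grain_chaff}. In the lattice case every variable is cellular, so fixing the torsion degree $t$ already specifies a piece of $\field[x]/I$ which is a faithful module over $\field[\N\genset]$. In the cellular setting one must simultaneously track the nilpotent multiplier $m$ and verify that the set of $\genset_m$-degrees of monomials $\overline{x^{\vecu}} \in \field[x]/I$ with $\deg_{M,T,\genset}(\overline{x^{\vecu}}) = (\deg(m), t, \,\cdot\,)$ is precisely $\deg(\field[\N\genset_m]/I_t)$, and that the face-localization behavior is governed by the degree pair topology of this quotient. Once this dictionary is established, the remaining assembly of the Hilbert series is routine.
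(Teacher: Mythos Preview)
Your proposal is correct and follows essentially the same approach as the paper. In fact, the paper does not give a separate proof of Theorem~\ref{thm:hochster_cellular}: it simply states, just before the theorem, that ``using the same arguments as for lattice ideals, we know that two elements whose degrees are in the same minimal open set $O \in \mathcal{G}(\field[\N\genset_{m}]/I_{t})$ have the same graded part of the Ishida complex $\rchaincpx{O}$,'' and then records the Hilbert series formula as an immediate consequence---so your plan is a faithful (and more detailed) expansion of exactly what the paper intends.
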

Again, this is a finite sum of rational functions.
The Corollary~\ref{cor:reisner} gives us the equivalent of Reisner's criterion for cellular binomial ideals, providing a characterization of Cohen-Macaulayness in terms of the cohomology of finitely many polyhedral complexes. First we need an auxilliary result.
\begin{lemma}
$\rchaincpx{O}$ is the cochain complex of a polyhedral complex.
\end{lemma}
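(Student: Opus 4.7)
The plan is to exhibit a polyhedral subcomplex $\Delta_O \subseteq \crosssection_{\ridI}$ so that $\rchaincpx{O}$ is the relative cochain complex of the pair $(\crosssection_{\ridI}, \Delta_O)$; this relative cochain data is precisely the cochain complex of a polyhedral complex. Fix a representative $\overline{x^{\vecu}}$ of the bigraded degree $(t, A\vecu)$ attached to $O$. For each face $\facF \in \facelattice(\crosssection_{\ridI})$ the bidegree-$(t, A\vecu)$ component of $(\field[x]/I)_{\virtual{\facF}}$ is at most one-dimensional over $\field$, and by \cref{lem:grain_chaff} whether it vanishes depends only on $O$, not on the choice of $\vecu$. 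Let $\Delta_O$ be the set of faces for which this component vanishes.

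The first step I would carry out is to show that $\Delta_O$ is a polyhedral subcomplex, i.e.\ downward-closed in $\facelattice(\crosssection_{\ridI})$. If $\facF \subseteq \facG$, then $\virtual{\facF} \subseteq \virtual{\facG}$ by the minimality clause in the definition of $\virtual{\cdot}$, so the multiplicative subsets satisfy $S_\facF \subseteq S_\facG$. Every element of $S_\facG$ is a monomial in $\field[\N^{\cellvar}]$ and is therefore a nonzero-divisor modulo $I$ by the definition of cellularity. Consequently the canonical map $(\field[x]/I)_{\virtual{\facF}} \to (\field[x]/I)_{\virtual{\facG}}$ is injective, and in particular injective on the bidegree-$(t, A\vecu)$ component, so nonvanishing at $\facF$ forces nonvanishing at $\facG$. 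Equivalently, vanishing is preserved downward, so $\Delta_O$ is a subcomplex of $\crosssection_{\ridI}$.

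The second step is essentially formal: by construction $\rchaincpx{O}^k = \bigoplus_{\facF \in \crosssection_{\ridI}^{k-1} \setminus \Delta_O} \field$, and on each one-dimensional summand the Ishida differential acts by $\incidence(\facF,\facG)\cdot \mathrm{id}_\field$. This is exactly the relative cochain differential of the pair $(\crosssection_{\ridI}, \Delta_O)$, up to the standard shift $\rchaincpx{O}^k = C^{k-1}(\crosssection_{\ridI}, \Delta_O)$, realizing $\rchaincpx{O}$ as a polyhedral cochain complex. The principal obstacle is the downward-closedness of $\Delta_O$: in the cellular binomial setting $\field[x]/I$ generally has zero-divisors, so one must verify that no nonvanishing bigraded component is killed after further localization. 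The cellularity hypothesis is precisely what guarantees this, since it forces the multiplicative subsets (built from cellular monomials) to consist of regular elements, which is what secures the injectivity used in the first step.
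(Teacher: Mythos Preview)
Your approach is correct and takes a genuinely different route from the paper's. The paper argues by contradiction that the nontrivial top-dimensional part of $\rchaincpx{O}$ is $\field^{1}$: if there were two distinct maximal faces $\virtual{\facF_{1}},\virtual{\facF_{2}}$ supporting degrees in $O$, the degree of the product $\overline{x^{u_{1}}}\cdot\overline{x^{u_{2}}}$ would land in the relative interior of their join, contradicting maximality. You instead prove directly that the set $\Delta_O$ of faces on which the graded component vanishes is downward-closed, using that the canonical maps $(\field[x]/I)_{\virtual{\facF}} \to (\field[x]/I)_{\virtual{\facG}}$ for $\facF\subseteq\facG$ are injective because the multiplicative sets consist of monomials in cellular variables, hence nonzerodivisors modulo $I$. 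Your argument has the advantage of explicitly naming the polyhedral object carrying $\rchaincpx{O}$ (namely the complement of $\Delta_O$ inside $\facelattice(\crosssection_{\ridI})$), which is exactly what is needed downstream in Corollary~\ref{cor:reisner}; the paper's argument establishes uniqueness of the top piece but leaves the identification of the underlying complex more implicit.

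One point in your write-up deserves an extra sentence. You assert that ``this relative cochain data is precisely the cochain complex of a polyhedral complex,'' but the relative cochain complex of the pair $(\crosssection_{\ridI},\Delta_O)$ is indexed by the \emph{upper} set $\facelattice(\crosssection_{\ridI})\setminus\Delta_O$, not by a subcomplex of $\crosssection_{\ridI}$. The correct identification is that, after reversing the partial order, this upper set becomes a subcomplex of the dual polytope $\crosssection_{\ridI}^{\text{op}}$ (the Alexander dual $\Delta_O^{*}$), and $\rchaincpx{O}$ is, up to reindexing, its reduced chain complex. This is exactly the mechanism the paper records later as the ``unmoved Alexander dual chain complex'' (\cref{cor:unmoved_Alexander_dual}); citing that, or spelling out the passage to the dual polytope, would close the gap. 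Also, your claim that each graded piece is at most one-dimensional is immediate for lattice ideals from the $(T\oplus\Z\genset)$-grading, but in the cellular binomial case you should point to the finer $\bigoplus_{m}(T_{m}\oplus\Z\genset_{m})$-grading introduced just before Theorem~\ref{thm:hochster_cellular} to justify it.
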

\begin{proof}
It suffices to show that the nontrivial top dimensional part of $\rchaincpx{O}$ is $\field^{1}$; suppose not; then there exists distinct maximal faces $\virtual{\facF_{1}}$ and $\virtual{\facF_{2}}$ of $\affinesemigp$ such that $\deg_{\genset}(\overline{x^{u_{1}}}) \in \virtual{\facF_{1}}$ and $\deg_{\genset}(\overline{x^{u_{2}}}) \in \virtual{\facF_{2}}$ for some distinct $\overline{x^{u_{1}}}$ and $\overline{x^{u_{2}}}$ with $u_1,u_2 \in O$. Then, the degree of the product $\overline{x^{u_{1}}} \cdot \overline{x^{u_{2}}}$  lies in the relative interior of a face $\virtual{\facG}$ in $\facelattice(\affinesemigp)$ which is a minimal face containing both $\virtual{\facF_{1}}$ and $\virtual{\facF_{2}}$, contradicting the maximality of $\virtual{\facF_{1}}$ and $\virtual{\facF_{2}}$.
\end{proof}

\begin{corollary}\label{cor:reisner}
Let $I$ be a cellular binomial ideal. Then $\field[x]/I$ is Cohen--Macaulay if and only if $H^i (\rchaincpx{O};\field) = 0$ for all $i \neq \text{dim}(\field[x]/I)$ and for all $O \in \mathcal{G}(\field[\N\genset_{m}]/I_{t})$, $m \in M, t \in T_m$. \qed
\end{corollary}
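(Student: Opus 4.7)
The plan is to derive the criterion by combining the standard local-cohomology characterization of Cohen--Macaulayness with the Hilbert-series formula of Theorem~\ref{thm:hochster_cellular}. Recall that for a finitely generated graded $\field$-algebra $R$ of Krull dimension $d$, the Cohen--Macaulay property is equivalent to $\localcoho{\maxid}{i}{R} = 0$ for all $i \neq d$, where $\maxid$ is the irrelevant maximal ideal. The graded maximal ideal of $\field[x]/I$ is the contraction of the radical monomial ideal of $\field[\affinesemigp]$ corresponding to the empty subcomplex of $\facelattice(\affinesemigp)$, so Theorem~\ref{thm:hochster_cellular} applies with $\ridI = \maxid$.

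Specializing that theorem yields
\[
\hilb{\localcoho{\maxid}{i}{\field[x]/I}}{\hilbt}=\sum_{m \in M}\sum_{t \in T_{m}} \sum_{O \in \mathcal{G}(\field[\N\genset_{m}]/I_{t})}\dim H^{i}(\rchaincpx{O};\field) \sum_{\substack{u \in \Z^{d} \\ \genset_{m}\cdot u^{\cellvar} \in O}}\overline{x^{u}}.
\]
The key observation is that, as $(m,t,O)$ varies, the inner degree-generating functions have pairwise disjoint supports in the $\bigoplus_{m \in M}(T_{m}\oplus \Z\genset_{m})$-grading. Indeed, distinct choices of $m \in M$ and $t \in T_m$ land in distinct graded components by construction, while for a fixed pair $(m,t)$ the minimal open sets $O \in \mathcal{G}(\field[\N\genset_m]/I_t)$ partition the extended degree space, so their generating functions are supported on disjoint subsets of $\Z^d$ and are nonzero whenever $O$ is nonempty.

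Because the terms are supported on pairwise disjoint multi-degree sets, the Hilbert series above is the zero series if and only if every coefficient $\dim H^{i}(\rchaincpx{O};\field)$ is zero. Combining this with the characterization of Cohen--Macaulayness as the vanishing of $\localcoho{\maxid}{i}{\field[x]/I}$ for all $i \neq \dim(\field[x]/I)$ produces the stated equivalence. The main technical point is the disjoint-support claim in the previous paragraph, which rests on the partition property of the minimal open sets of the extended-degree topology introduced before Theorem~\ref{thm:hochster_cellular}; once that is in place, extracting each $H^{i}(\rchaincpx{O};\field)$ from the Hilbert series is a routine linear-independence argument for formal series on disjoint supports.
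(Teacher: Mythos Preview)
Your proposal is correct and follows exactly the route the paper intends: the paper gives no explicit proof (the corollary is marked \qed), treating the statement as an immediate consequence of Theorem~\ref{thm:hochster_cellular} together with the standard local-cohomology characterization of Cohen--Macaulayness. You have simply made explicit the disjoint-support argument needed to read off the vanishing of each $H^{i}(\rchaincpx{O};\field)$ from the vanishing of the Hilbert series, which is precisely what the paper leaves to the reader.
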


\begin{example}
\label{ex:lattice_ideal}

Let $L$ be the following lattice in $\Z^{4}$ and $L_{\text{sat}}$ its saturation. 
\[
L:= \left\langle \left(\begin{smallmatrix} 2 \\ 0 \\ -3 \\ 0 \end{smallmatrix}\right), \left(\begin{smallmatrix} 1 \\ -5 \\ 1 \\ 5 \end{smallmatrix}\right) \right\rangle,L_{\text{sat}}:=  \left\langle \left(\begin{smallmatrix} 1 \\ -1 \\ -1 \\ 1 \end{smallmatrix}\right), \left(\begin{smallmatrix} 0 \\ -2 \\ 1 \\ 2 \end{smallmatrix}\right),\left(\begin{smallmatrix} -1 \\ -1 \\ 2 \\ 2 \end{smallmatrix}\right),\left(\begin{smallmatrix} -2 \\ 0 \\ 3 \\ 0 \end{smallmatrix}\right) \right\rangle.
\] 
The torsion group $T:=L/L_{\text{sat}}$ is isomorphic to $\Z/5\Z$. We represent $T$ as follows
\[
T=\left\{ e = \overline{\left(\begin{smallmatrix} 0 \\ 0 \\ 0 \\0 \end{smallmatrix}\right)}, \xi=\overline{\left(\begin{smallmatrix} -1 \\ 1 \\ 1 \\ -1 \end{smallmatrix}\right)}, \xi^{2} = \overline{\left(\begin{smallmatrix} 0 \\ 2 \\ -1 \\ -2 \end{smallmatrix}\right)}, \xi^{3} = \overline{\left(\begin{smallmatrix} -1 \\ 3 \\ 0 \\-3 \end{smallmatrix}\right)}, \xi^{4} = \overline{\left(\begin{smallmatrix} 0 \\ 4 \\ -2 \\-4 \end{smallmatrix}\right)}  \right\}
\]
using GRevLex term order in Macaulay2.

In this case $Q:=(\Z^{4}/L_{\text{sat}}) \cap \N^{4}\cong \N \left[\begin{smallmatrix}3 & 1 & 2 & 0 \\ 0 & 1 & 0 & 1 \end{smallmatrix}\right]$. As usual $\Z Q = \Z^{4}/L_{\text{sat}}$. On the polynomial ring $\field[a,b,c,d]$, the lattice ideals corresponding to $L$ and $L_{\text{sat}}$ are 
$$I_{L}:=\langle a^{2}-c^{3}, acd^{5}-b^{5} \rangle \text{ and } I_{\text{sat}}:=\langle bc-ad,cd^{2}-b^{2},c^{2}d-ab,c^{3}-a^{2} \rangle.$$
Hence, the Ishida complex supported on the maximal monomial ideal is 
$$0 \to \field[x]/I_{L} \to (\field[x]/I_{L})_{a,c}\oplus (\field[x]/I_{L})_{d} \to (\field[x]/I_{L})_{a,b,c,d} \to 0.$$
Here, we see 
that for any $0 \leq i$, 
\[
C_{t,\left[\begin{smallmatrix}i \\ j\end{smallmatrix}\right]}^{\bullet}: 0\to \mathbb{K}^{j+1} \to \mathbb{K}^{j+1 +5} \to \mathbb{K}^{5} \to 0
\]
when $0 \leq j <5$ and
\[
C_{t,\left[\begin{smallmatrix}i \\ j\end{smallmatrix}\right]}^{\bullet}: 0\to \mathbb{K}^{5} \to \mathbb{K}^{10} \to \mathbb{K}^{5} \to 0
\]
when $5 \leq j$ for any $t \in T$. 
Since there is no non-top cohomology, we
conclude that $\field[\varx]/I_{L}$ is Cohen-Macaulay. 
\end{example}

\section{Local cohomology for affine semigroups over simplices}
\label{sec:hochster_thm}

In this section, we consider affine semigroup rings such that $\realize \affinesemigp$ is a cone over a simplex. In this case the combinatorics simplifies and becomes more explicit.

\subsection{Hyperplane arrangements}
\label{subsec:hyperplane}

The \emph{hyperplane arrangement} $\arngmt:=\{\hplane_{1},\cdots, \hplane_{m} \}$ of a polyhedron $\polytope$ is the collection of supporting hyperplanes of the facets of $\polytope$ in $\R^{d}$. In this case, $\bigcap_{i=1}^{m}\hplane^{+} = \polytope$, where 
$\hplane^{+}$ denotes the positive half-space associated to a hyperplane (the positive side is the side that contains our polyhedron). A hyperplane arrangement is \emph{linear} if all hyperplanes in the arrangement contain the origin. A \emph{region} $\regr$ of a hyperplane arrangement $\arngmt$ is a connected component of $\R^{d}-\bigcup_{\hplane \in \arngmt} \hplane$. $\regionr{\arngmt}$ refers to the collection of all regions of $\arngmt$. 

Suppose $\arngmt$ consists of a minimal number of hyperplanes which generate a rational polyhedral cone $\polytope$. 
Then $\arngmt$ is linear and all regions in $\regionr{\arngmt}$ are unbounded rational polyhedral cones. Moreover, every region $\regr$ can be expressed as
\[
\regr_{\setS}:=\left(\bigcap_{\smalli \in [m]\minus\setS} \hplane_{\smalli}^{+}\right) \cap \left(\bigcap_{\smalli \in \setS} \hplane_{\smalli}^{-}\right)\minus \bigcup_{\smalli=1}^{m}\hplane_{\smalli}
\]
for a subset $\setS \subseteq [m]$ where $\hplane_{\smalli}^{-}$ is the complement of $\hplane_{\smalli}^{+}$. In other words, a region is labeled by the collection of hyperplanes whose positive half space contains it. 
It follows that $\regionr{\arngmt}$ is partially ordered by reverse inclusion on the set of labels; $\regr_{\setS_{1}} \leq \regr_{\setS_{2}} \text{ if } \setS_{1} \supseteq \setS_{2}.$ We call $\regionr{\arngmt}$ the \emph{poset of regions} of $\arngmt$. Our notation here is consistent with that of ~\cites{Edelman84,BEZ90}  regarding $\polytope$ as the base region. Moreover, the natural embedding $\facelattice(\polytope) \to \regionr{\arngmt}$ sending a face $\facF$ to the set of indices of hyperplanes containing $\facF$ exists~\cite{Edelman84}*{Lemma 1.3}.  Since we are interested in regions partitioning $\R^{d}$ along with the set of degrees of standard monomials of localizations, we modify the definition of $\regr_{\setS}$ as follows, to include boundaries:
\[
\regr_{\setS}:=\left(\bigcap_{\smalli \in [m] \minus \setS} \hplane_{\smalli}^{+}\right) \cap \left(\bigcap_{\smalli \in \setS} \hplane_{\smalli}^{-}\right),
\]

A \emph{cumulative region} $\regR_{\setS} = \left(\bigcap_{\smalli \in [m]\setminus \setS} \hplane_{\smalli}^{+}\right)$ is the union of all regions less then $\regr_{\setS}$. The \emph{poset of cumulative regions} $\regionR{\arngmt}$ is a set of all cumulative regions ordered by inclusion. By definition, $\regionR{\arngmt} \cong \regionr{\arngmt}$ as posets. We follow the conventions of~\cite{Stanley07}.

\subsection{Sections of polyhedra}
\label{subsec:polyhedra}

If $\facF$ is a zero-dimensional face of a polyhedral complex $\polytope$, then we define the \emph{vertex figure of $\polytope$ at $\facF$} as follows. Assume $\polytope$ is realized in $\R^{\dimd}$, 
Pick a sphere $S^{\dimd-1}$ centered at $\facF$ such that every nonempty face of $\polytope$ except $\facF$ is not completely contained in the sphere. 
Then, the vertex figure $\polytope/\facF$ is the polyhedral complex generated by the intersections of all faces of $\polytope$ containing $\facF$ on $S^{\dimd-1}$. Likewise, when $\facF$ is positive dimensional face, we define the \emph{section of $\polytope$ at $\facF$}, denoted $\polytope/\facF$, to be the polytope generated by taking vertex figures iteratively over the vertices arising from $\facF$. As a new abstract polytope, $\facG/\facF$ is combinatorially equivalent to the link of $\facF$ over $\facG$ as a sub-polyhedral complex of $\polytope$.
Moreover, if we regard the face lattice of the vertex figure as a collection of faces of $\polytope$, then its closure in the usual Euclidean topology agrees with the star of $\facF$.

Given a polyhedral complex $\Delta$, let $\max(\Delta)$ be the set of maximal elements of $\Delta$, and let $\bigcap\max(\Delta)$ be a set of faces which are intersections of maximal faces of $\Delta$ as a poset.
We say $\Delta$ is $m$-\emph{combinatorially connected} for $m:=\min_{\facF \in \bigcap\max(\Delta)}\dim \facF$. 
Note that this is a much finer notion of connectedness than the usual topological $n$-connectedness. For example, a simplicial complex consisting of two triangles sharing an edge is $1$-combinatorially connected but contractible (infinitely-connected) in the sense of topological $n$-connectedness. 

\begin{lemma}
\label{lem:m_connectivity_and_contractibility}
Any vertex figure of an $m$-combinatorially connected polyhedral complex $\Delta$ is at least $(m-1)$-combinatorially connected. Moreover, when $m \geq1$, for any vertex $\facF \in \Delta$, the vertex figure $(\Delta/\facF)$ is contractible. Hence, for any face $\facF \in \Delta$ whose dimension is less than $m$, $(\crosssection/\facF) \cap \Delta$ is contractible.
\end{lemma}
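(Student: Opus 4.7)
The plan is to handle the three assertions in order. The common combinatorial tool is that vertex figures commute with intersections at the distinguished vertex: if $v$ lies in each of $\facF_{1},\ldots,\facF_{k}\in\Delta$, then $(\facF_{1}/v)\cap\cdots\cap(\facF_{k}/v)=(\facF_{1}\cap\cdots\cap\facF_{k})/v$, and each vertex figure drops dimension by exactly one.

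For the first assertion, I would first observe that the maximal faces of $\Delta/v$ are exactly the vertex figures $\facF/v$ of maximal faces $\facF\in\Delta$ that contain $v$ (if $\facF \subsetneq G$ with $v\in\facF$, then $v\in G$ too, so maximality is preserved under vertex figure at $v$). Hence every element of $\bigcap\max(\Delta/v)$ is of the form $(\facF_{1}\cap\cdots\cap\facF_{k})/v$ with each $\facF_i$ maximal in $\Delta$ containing $v$, and since $\facF_{1}\cap\cdots\cap\facF_{k}\in\bigcap\max(\Delta)$ has dimension at least $m$, its vertex figure at $v$ has dimension at least $m-1$.

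For the second assertion, let $\{\facF_{i}\}_{i\in I}$ be the maximal faces of $\Delta$ containing $v$, and set $C:=\bigcap_{i\in I}\facF_{i}$; by hypothesis $\dim C\geq m\geq 1$ and $v\in C$, so $C/v$ is a nonempty face of $\Delta/v$ contained in every maximal $\facF_{i}/v$. Fix any $p\in C/v$, and consider the affine homotopy $H_{t}(x)=(1-t)x+tp$: for any $x\in\Delta/v$ there is some maximal $\facF_{i}/v$ containing both $x$ and $p$, and the segment from $x$ to $p$ lies in this convex polytope, hence inside $\Delta/v$. Because the homotopy is given by the same affine formula in the ambient space, the pointwise definitions agree on overlaps, yielding a deformation retraction of $\Delta/v$ onto $p$.

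For the third assertion, I iterate. Let $k=\dim\facF<m$, and let $v_{0}',\ldots,v_{k}'$ be the vertices produced by successive vertex-figure constructions from $\facF$. Set $\Delta_{0}:=\Delta$ and $\Delta_{i+1}:=\Delta_{i}/v_{i}'$; by the definition of the section, $\Delta_{k+1}=(\crosssection/\facF)\cap\Delta$. Inductively applying the first assertion, $\Delta_{i}$ is $(m-i)$-combinatorially connected, so $\Delta_{k}$ is at least $1$-combinatorially connected. The second assertion applied to $\Delta_{k}$ at $v_{k}'$ then yields the contractibility of $\Delta_{k+1}$. The main subtlety throughout will be the bookkeeping of the vertex figure construction: verifying that maximal faces of $\Delta/v$ correspond bijectively to maximal faces of $\Delta$ containing $v$, that vertex figures commute with intersections at a common vertex, and that the iterated construction in the third assertion produces a well-defined sequence of vertex figures (because the image of $\facF$ is a polytope of one-lower dimension at each step, so always has at least one vertex available). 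Once these combinatorial identifications are established, the topological content of the argument—contracting each convex polytope of the complex simultaneously to a shared point by a common affine homotopy—is elementary.
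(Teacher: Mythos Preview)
Your proposal is correct and follows essentially the same route as the paper: both arguments identify the intersection $C=\bigcap_i\facF_i$ of all maximal faces through the vertex as the key object, use it to bound the combinatorial connectivity of the vertex figure by $\dim C-1\geq m-1$, and then contract $\Delta/\facv$ onto (a point of) $C/\facv$ before iterating for higher-dimensional $\facF$. Your affine straight-line homotopy to a point $p\in C/\facv$ is a slightly more explicit rendering of the paper's collapse of the star onto $C$ followed by restriction to the vertex-figure sphere, but the underlying idea is identical.
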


\begin{proof}
As the $m=0$ case holds vacuously, we may let $m \geq 1$. Pick a vertex $\facF$. Let $\facG$ be the intersection of all maximal faces of $\Delta$ containing $\facF$. Then, $\dim \facG \geq m.$ All maximal faces of the vertex figure $\Delta /\facF$ are inherited from those maximal faces of $\Delta$ containing $\facF$, hence $\Delta /\facF$ is $(\dim\facG-1)$-combinatorially connected, and $\dim\facG-1 \geq m-1$. 

For the second statement, let $\facF$ and $\facG$ be the same as above. Let $X= \{ \facH \in \Delta \mid  \facH \supseteq \facF \}$ as a set of faces of $\Delta$. 
Since $X$ is homotopic to $\facG$; we may collapse each maximal face in $X$ containing $\facG$ continuously to $\facG$. Moreover, $X \minus \{ \facF\}$ is homotopic to $\facG \minus \{ \facF\}$. To see this, let $S_{\facF}$ be a sphere centered at $\facF$ and generating the vertex figure of $\Delta$ and $\facG$ on its surface. The homotopy from $X$ to $\facG$ restricted on $S_{\facF}$ gives the homotopy between vertex figures of $X$ and $\facG$ over $\facF$ if $\dim \facG \geq 1$. The last statement follows by applying the second statement iteratively.
\end{proof}

\begin{lemma}
\label{lem:classification_of_vertex_figure}
Given a $\dimd$-dimensional polyhedral complex $\polytope$ homeomorphic to a disk $D^{\dimd}$, let $\facF$ be a vertex. 
If $\facF$ is in the interior of $\polytope$, then $\polytope/\facF$ is homeomorphic to $S^{\dimd-1}$. Otherwise, if $\facF$ is in the boundary of $\polytope$, then $\polytope/\facF$ is homeomorphic to $D^{\dimd-1}$.
\end{lemma}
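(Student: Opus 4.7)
The plan is to exploit the general fact that, in a polyhedral complex, a sufficiently small closed neighborhood of a vertex $\facF$ is homeomorphic to the topological cone with apex $\facF$ over the vertex figure $\polytope/\facF$, and then to identify $\polytope/\facF$ by reading off the local topology of $\polytope \cong D^{\dimd}$ at $\facF$.

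First I would fix $\epsilon > 0$ small enough that the closed Euclidean ball $\bar{B}_\epsilon(\facF)$ in the ambient realization space meets no face of $\polytope$ other than those containing $\facF$. Then $\polytope/\facF = \polytope \cap S^{\dimd-1}_\epsilon(\facF)$ by the definition of the vertex figure, and the intersection $N := \polytope \cap \bar{B}_\epsilon(\facF)$ is the truncated star of $\facF$, which retracts radially onto $\facF$ and is therefore homeomorphic to $\operatorname{Cone}(\polytope/\facF)$ with apex $\facF$.

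If $\facF$ lies in the topological interior of $\polytope$ under the homeomorphism $\polytope \cong D^{\dimd}$, then invariance of domain gives a neighborhood of $\facF$ in $\polytope$ homeomorphic to an open Euclidean $\dimd$-ball. Shrinking $\epsilon$ if necessary, I may assume $N$ itself is homeomorphic to a closed $\dimd$-disk. Since $N$ is simultaneously $\operatorname{Cone}(\polytope/\facF)$ and $D^{\dimd}$, the base of the cone is forced to be $\partial D^{\dimd}$, so $\polytope/\facF \cong S^{\dimd-1}$. In the boundary case, $\facF$ lies on $\partial D^{\dimd} \cong S^{\dimd-1}$, and $D^{\dimd}$ is a manifold-with-boundary there; hence a small neighborhood of $\facF$ in $\polytope$ is homeomorphic to a closed half-ball in $\R^{\dimd}$. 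Such a closed half-ball is itself a cone on its flat face, which is a $(\dimd-1)$-disk, and matching this with $N \cong \operatorname{Cone}(\polytope/\facF)$ yields $\polytope/\facF \cong D^{\dimd-1}$.

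The main obstacle I expect is the technical justification that the base of a topological cone is determined up to homeomorphism by the cone itself: namely, if $\operatorname{Cone}(X) \cong D^{\dimd}$ (respectively, a closed half-ball) with apex at the cone point, then $X \cong S^{\dimd-1}$ (respectively, $D^{\dimd-1}$). This can be handled by radially rescaling the cone so that the base is realized as the intersection with a small sphere around the apex, and then transporting this sphere-intersection across the homeomorphism; equivalently, it is the standard fact that the link of an interior, respectively boundary, point in a topological manifold-with-boundary is uniquely determined up to homeomorphism.
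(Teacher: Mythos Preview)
Your proposal is correct and follows essentially the same approach as the paper: both arguments identify the vertex figure as the intersection of a small sphere centered at $F$ with $\mathcal{P}$, and then read off its topology from the local structure of $D^{d}$ at interior versus boundary points. The paper's proof is a one-sentence sketch of this idea (the defining sphere is, or is not, entirely contained in $\mathcal{P}$), while you supply the additional precision---the cone structure of the closed star, invariance of domain, and uniqueness of links in a manifold-with-boundary---needed to make the argument rigorous.
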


\begin{proof}
If $\facF$ is in the boundary of $\polytope$, the result is clear since the sphere containing $\facF$ that defines the vertex figure cannot be contained in the polyhedral complex. One can apply a similar argument to the case when $\facF$ is in the relative interior.
\end{proof}

\subsection{Local cohomology}
\label{subsec:hochster}

Let $(I,I,\genset)$ be a prime lattice ideal $I$ whose corresponding affine semigroup $\affinesemigp = \N\genset$ is \emph{simplicial}, i.e., the transverse section $\crosssection$ of the polyhedral cone $\R_{\geq 0}\affinesemigp$ is a $d$-simplex. Let $\arngmt=\{\hplane_{i}\}_{i=1}^{\dimd}$ be the minimal hyperplane arrangement of $\R_{\geq 0}\affinesemigp$. Label the faces of $\crosssection$ by their supporting facets; for a facet $\hplane_{i} \cap \crosssection$, use the label $[\dimd]\minus \{ i\}$. Hence, the zero-dimensional face (of the transverse section) that does not lie in the hyperplane $\hplane_{i}$ is indexed by $\{ i\}.$ 

From the natural isomorphism $\facelattice(\affinesemigp) \cong \facelattice(\crosssection)$, we may label faces of $\affinesemigp$ using $2^{[d]}=\facelattice(\crosssection)$.

In this notation, for a face $\facF \in \facelattice(\affinesemigp)$, $\regr_{\facF}$ defined in~\cref{subsec:hyperplane} by regarding $\facF$ as a subset of $[d]$, is a region generated by the positive half spaces containing $\facF$ as a face of $\affinesemigp$. This labeling is induced by the observation that the poset of regions $\regionr{\arngmt}$ is equal to the face lattice $\facelattice(\crosssection)$. This also agrees with the labeling of $\regr_{\facF}$ in~\cref{subsec:hyperplane}; $\regr_{\facF}$ is the region contained in the positive half spaces of $\hplane_{i}$ where $i \in \facF$. For example, if $\facv$ is the zero-dimensional face corresponding to $x_{i}$, then $\regr_{\facv}= \regr_{[d] \minus \{ i\}}$. 

Let $\ridI$ be a radical monomial ideal of $\field[\affinesemigp] \cong \field[x]/I$ corresponding to a proper subcomplex $\Delta$ of $\crosssection$ and a proper face $\facF \in \facelattice(\affinesemigp)$. In this setting, we always label all minimal open sets of the original degree space $\bigcup\deg(\field[\affinesemigp]/\ridI)$ using a pair of faces as in the following result. We denote $\max(\Delta)$ the collection of facets of $\Delta$.

\begin{lemma}
\label{lem:index_minimal_open_sets}
For every minimal open set $\setS \in \mathcal{G}(\field[\affinesemigp]/I)$ of the degree space $\bigcup\deg(\field[\affinesemigp]/\ridI)$ there exists a unique pair of faces $(\facF, \facG)$ such that $\setS \subseteq (\affinesemigp-\N\facF) \cap \regr_{\facF}$ and $\facG \in \{ \facG' \in \bigcap \max(\Delta) \mid \facG' \supseteq \facG \supseteq \facF \}$. We use this pair to label $\setS$.
\end{lemma}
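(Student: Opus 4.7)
My plan is to extract the pair $(\facF, \facG)$ directly from the overlap class structure generating the degree pair topology, and to use the Boolean face lattice of the simplicial cone $\realize\affinesemigp$ to pin down uniqueness. First I would recall that, by construction of the topology on $\bigcup\deg(\field[\affinesemigp]/\ridI)$, the sets $\bigcup[\veca',\facF']$ form a clopen subbasis as $(\veca',\facF')$ ranges over the degree pairs of all localizations $(\field[\affinesemigp]/\ridI)_P$. Consequently every minimal open set $\setS \in \mathcal{G}(\field[\affinesemigp]/\ridI)$ must be contained in a single overlap class union $\bigcup[\veca,\facF]$ (otherwise $\setS$ could be refined by intersecting with one of the subbasic sets and its complement). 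I would take $\facF$ to be the stabilizer face of this overlap class, that is, the minimal $\facF'$ for which $\setS$ is a union of translates of $\N\facF'$; in the simplicial setting this minimal face is unambiguous, because the Boolean face lattice admits unique minima in any non-empty collection.

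Next I would verify the containment $\setS \subseteq (\affinesemigp - \N\facF) \cap \regr_{\facF}$. The inclusion in $\affinesemigp - \N\facF$ is essentially tautological: every representative $\veca' \in \bigcup[\veca,\facF]$ gives $\veca' + \N\facF \subseteq \affinesemigp - \N\facF$ by the very definition of the localization. The region containment is where simpliciality is essential: under the identification of $\facelattice(\crosssection)$ with $2^{[d]}$, the cone $\realize(\affinesemigp - \N\facF)$ is the cumulative region $\regR$ indexed by $\facF$, and its strictly smaller open strata correspond to localizations along proper sub-faces $\facF'' \subsetneq \facF$. Since $\setS$ has stabilizer exactly $\facF$ rather than a proper sub-face, its degrees must lie in the open stratum of $\regR$, which in the simplicial arrangement is precisely the single region $\regr_{\facF}$.

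For the face $\facG$, I would use that $\ridI$ corresponds to a subcomplex $\Delta \subset \facelattice(\affinesemigp)$: the degrees appearing in $(\field[\affinesemigp]/\ridI)_P$ organize into standard pairs, each supported on a maximal face of $\Delta$ containing the face of $\affinesemigp$ matching $P$. The degrees in a single overlap class therefore sit inside an intersection of several such maximal faces, giving an element of $\bigcap\max(\Delta)$ containing $\facF$, and the face $\facG$ of the statement is any face sandwiched between $\facF$ and this intersection, as required. Uniqueness of $(\facF,\facG)$ then follows from the minimality of $\setS$ together with the Boolean structure of the face lattice. The main obstacle I expect is the region-containment step in the second paragraph: one must exclude that a single overlap class straddles two hyperplanes of $\arngmt$. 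This should follow from the observation that $\N\facF$ lies entirely on one side of every $\hplane_i$ whose index lies outside $\facF$, combined with the maximality built into the definition of a degree pair, but making this airtight when $\affinesemigp$ is non-normal (so that lattice-point holes can appear inside the cone) will require the careful standard-pair bookkeeping of Section~\ref{sec:Hochster}.
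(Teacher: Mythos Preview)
Your overall strategy—using the overlap-class subbasis and the Boolean face lattice of the simplicial cone—is aligned with the paper's, but the way you extract $\facF$ has a genuine gap. You define $\facF$ as the ``stabilizer face,'' i.e.\ the minimal $\facF'$ for which $\setS$ is a union of $\N\facF'$-translates, and then try to deduce $\setS\subseteq\regr_{\facF}$ from this. This characterization does not hold: a minimal open set $\setS$ lying in the region $\regr_{\facF}$ is \emph{not} stable under $+\N\facF$. For any $i\in\facF$, the ray indexed by $\{i\}$ lies on the positive side of $\hplane_i$ while $\regr_{\facF}$ lies on the negative side, so adding generators of $\facF$ pushes points across $\hplane_i$ and out of $\regr_{\facF}$. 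With your definition the only face that literally stabilizes $\setS$ is $\tilde 0$, and the deduction $\setS\subseteq\regr_{\facF}$ collapses. Relatedly, the face attached to an overlap class of $\ridI$ is a \emph{maximal} face of $\Delta$—that is what will become $\facG$, not $\facF$—so you are conflating the two coordinates of the label.

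The paper reverses the order of your two steps: it first uses the simplicial hypothesis (bijection $\facelattice(\affinesemigp)\cong\regionr{\arngmt}$) to place $\setS$ in a unique region $\regr_{\facF}$, which \emph{defines} $\facF$; equivalently, $\facF$ is the minimal face with $\setS\subseteq\affinesemigp-\N\facF$. Only then does it analyze overlap classes, and the key structural input (which you use implicitly but should state) is that for a \emph{radical} monomial ideal every overlap class of $\field[\affinesemigp]/\ridI$ has the form $[0,\facG]$ with $\facG\in\max(\Delta)$, and after localizing at $\facF$ these become $[0,\facG\cup(-\facF)]$ for $\facG\supseteq\facF$. Minimal open sets in $\regr_{\facF}$ are then Boolean combinations of these, which is exactly the bijection with $\bigcap\max(\Delta)_{\facF}$. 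Your third paragraph on $\facG$ is essentially correct once $\facF$ is defined this way; the fix is to abandon the stabilizer description and let the region pick out $\facF$ first.
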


\begin{proof}
Given a face $\facF \in \facelattice(\affinesemigp)$, let $\mathcal{G}(\affinesemigp/\ridI)_{\facF}$ be the set of minimal open sets of the degree space $\bigcup\deg(\field[x]/\ridI)$ contained in $\regr_{\facF}$. Recall that $\bigcap \max(\Delta)_{\facF}$ is the collection of faces in $\bigcap \max(\Delta)$ containing $\facF$. We claim that $\mathcal{G}(\affinesemigp/\ridI)_{\facF}$ and $\bigcap \max(\Delta)_{\facF}$ are in bijection. 

Recall that all overlap classes (for $\ridI$) are of the form $[0,\facG]$ for some face $\facG \in \max(\Delta)$. Hence, when $\facF = \tilde{0},$ the corresponding minimal open set inside of $\regr_{\emptyset}\cap\affinesemigp= \affinesemigp$ is obtained by  intersection and complement of faces. If $\facF \not\in \Delta$, then the localization is zero, thus the statement is vacuously true. Suppose $\facF \gneq \tilde{0} \in \Delta$. The extension $(\ridI)_{\facF}$ of the ideal on $\field[\affinesemigp-\N\facF]$ is still radical and its overlap classes are of form $[0,\facG \cup (-\facF)]$ where $[0,\facG]$ is an overlap class of $\field[\affinesemigp]/\ridI$ for some face $\facG \supseteq \facF$. Thus, every minimal open set in $\mathcal{G}(\affinesemigp/\ridI)_{\facF}$ is obtained by intersecting open sets of the form $\bigcup[0,\facG \cup(-\facF)] \minus \left(\bigcup_{\facF' \subset \facG}\regr_{\facF'} \right)$. 
Thus $\mathcal{G}(\affinesemigp/\ridI)_{\facF}$ is labeled by $\bigcap \max(\Delta)_{\facF}$.
\end{proof}

We are now ready to study graded pieces of local cohomology modules.

\begin{theorem}
\label{thm:non_acyclic_chains_of_local_coho_of_maximal_id} 
Let $\vecu$ be a degree in a minimal open set $\setS$ indexed by $(\facF,\facG)$ with $\facF \neq \facG$. Then, $\localcoho{\maxid}{\smalli}{\field[\affinesemigp]/\ridI}_{\vecu}=0$ for all $i$. If $\vecu$ is a degree in a minimal open set indexed by a pair of the same face $(\facF,\facF)$, 
\[
\localcoho{\maxid}{\smalli}{\field[\affinesemigp]/\ridI}_{\vecu} \cong \tilde{H}_{\text{simp}}^{\smalli-\dim \facF-1}((\crosssection/\facF) \cap \Delta)
\]
where $\tilde{H}_{\text{simp}}^{\bullet}(-)$ means the reduced simplicial cohomology.
\end{theorem}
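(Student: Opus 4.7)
The plan is to apply~\cref{thm:Ishida} to the module $\field[\affinesemigp]/\ridI$, pass to the $\vecu$-graded piece of the Ishida complex, and then identify the resulting cochain complex, after a dimension shift, with the reduced simplicial cochain complex of $(\crosssection/\facF)\cap\Delta$ in the nonvanishing case, or with that of a nonempty simplex in the vanishing case.

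By~\cref{thm:Ishida}, $\localcoho{\maxid}{i}{\field[\affinesemigp]/\ridI}\cong H^{i}(\ishida^{\bullet}\otimes_{\field[\affinesemigp]}\field[\affinesemigp]/\ridI)$, whose $k$-th term is $\bigoplus_{\facF'\in\facelattice(\crosssection)^{k-1}}(\field[\affinesemigp]/\ridI)_{\virtual{\facF'}}$. Passing to the $\vecu$-graded piece, the summand indexed by $\facF'$ contributes a copy of $\field$ precisely when $\facF'\in\Delta$ and $(\vecu+\N\virtual{\facF'})\cap\bigcup_{\facH\in\Delta}\N\virtual{\facH}\neq\emptyset$, and $0$ otherwise. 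The next step is to use the label $(\facF,\facG)$ of the minimal open set containing $\vecu$ from~\cref{lem:index_minimal_open_sets} to describe these contributing faces. The sign constraint $\vecu\in\regr_{\facF}$ together with the simplicial structure forces $\facF\subseteq\facF'$, while the label $\facG\in\bigcap\max(\Delta)$ records the minimal face of $\Delta$ that $\vecu+\N\virtual{\facF'}$ is forced to meet. Working this out, the contributing set equals the star $\{\facF'\in\Delta:\facF\subseteq\facF'\}$ when $\facF=\facG$, and the strictly smaller Boolean interval $\{\facF':\facF\subseteq\facF'\subseteq\facG\}$ when $\facF\neq\facG$.

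Under the canonical bijection between faces of $\crosssection$ containing $\facF$ and faces of $\crosssection/\facF$, a face of simplicial dimension $k-1$ containing $\facF$ corresponds to a face of $\crosssection/\facF$ of dimension $k-\dim\facF-1$, where $\dim\facF$ is the dimension of $\facF$ as a face of the cone $\R_{\geq0}\affinesemigp$. When $\facF=\facG$, this bijection identifies the $k$-th term of the $\vecu$-graded Ishida complex with $\tilde{C}^{k-\dim\facF-1}((\crosssection/\facF)\cap\Delta)$ in the reduced simplicial cochain complex, and the Ishida differentials match the simplicial coboundary up to the incidence signs. Reindexing gives $H^{i}\cong\tilde{H}^{i-\dim\facF-1}((\crosssection/\facF)\cap\Delta)$, as claimed. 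When $\facF\neq\facG$, the same bijection instead produces the reduced cochain complex of the nonempty simplex $\facG/\facF\subseteq\crosssection/\facF$; since any nonempty simplex is contractible, this complex is acyclic and the local cohomology vanishes at $\vecu$. The connectivity and contractibility statements in~\cref{lem:m_connectivity_and_contractibility,lem:classification_of_vertex_figure} provide the topological backdrop.

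The hard part is the combinatorial bookkeeping of the second paragraph: proving rigorously that the label $(\facF,\facG)$ of the minimal open set determines the contributing $\facF'$ exactly as stated. This requires controlling when $\vecu+\N\virtual{\facF'}$ hits $\N\virtual{\facH}$ for some $\facH\in\Delta$, and the simplicial hypothesis on $\affinesemigp$ is essential because it replaces the geometry of arbitrary cones by the cleanly-labeled Boolean combinatorics indexed by subsets of $[d]$. Once this identification is in hand, the dimension shift and the acyclicity of nonempty simplices are standard.
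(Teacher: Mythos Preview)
Your plan and the $F=G$ case are fine, but the identification in the $F\neq G$ case is wrong. Write $G=\bigcap_{j=1}^{m}G_{j}$, where $G_{1},\dots,G_{m}$ are the maximal faces of $\Delta$ containing $G$. For $\vecu$ in the minimal open set labeled $(F,G)$, the faces $F'$ with $(\field[\affinesemigp]/\ridI)_{\virtual{F'},\vecu}\neq 0$ are exactly those with $F\subseteq F'\subseteq G_{j}$ for \emph{some} $j$, not those with $F\subseteq F'\subseteq G$. Concretely, take $d=4$, $\Delta$ with facets $G_{1}=\{1,2,3\}$ and $G_{2}=\{2,3,4\}$, $F=\{2\}$, $G=\{2,3\}$. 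Then $F'=\{1,2\}$ contributes because $\{1,2\}\subseteq G_{1}$ and $\vecu\in\N G_{1}-\N\{1,2\}$, yet $\{1,2\}\not\subseteq G$. So the $\vecu$-graded piece is the cochain complex of $\Delta(G_{1},\dots,G_{m})/F$ (the section at $F$ of the subcomplex with facets $G_{1},\dots,G_{m}$), not of the single simplex $G/F$.

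Your desired vanishing still holds, but proving it now requires showing that $\Delta(G_{1},\dots,G_{m})/F$ is contractible, and this is precisely what \cref{lem:m_connectivity_and_contractibility} delivers: the intersection of the facets of $\Delta(G_{1},\dots,G_{m})$ is $G$, which has dimension strictly greater than $\dim F$, so the complex is $m$-combinatorially connected for some $m>\dim F$ and the section at $F$ is contractible. The paper's proof follows exactly this route. Your shortcut to ``a nonempty simplex is contractible'' bypasses \cref{lem:m_connectivity_and_contractibility} only because the graded piece was misidentified.
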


When our affine semigroup ring is the polynomial ring, the above reduces to the very 
well known formulas for local cohomology of Stanley--Reisner rings using homology of links.

\begin{proof}[Proof of~\cref{thm:non_acyclic_chains_of_local_coho_of_maximal_id}]
Pick a minimal open set $\setS$ that corresponds to $(\facF,\facG)$. 
We know that $\deg(\setS) \subset \field[\affinesemigp-\facH]$, where $\facH \in  (\facF/\facG)$.
We may assume $\facG = \bigcap_{i=1}^{m}\facG_{i}$ for some $\facG_{i} \in \max(\Delta)$ containing $\facG$. Then $\setS$ is contained in an overlap class of $\ridI$ whose face is $\facG_{i}$. From~\cref{def:gen_ishida}, the $\vecu$-graded part of the Ishida complex is equal to the (shifted) chain complex of $\Delta(\facG_{1},\cdots, \facG_{m})/\facF$ where $\Delta(\facG_{1},\cdots, \facG_{m})$ is a subcomplex of $\Delta$ such that its maximal faces are $\facG_{1},\cdots,\facG_{m}$. When $\facF \neq \facG$,~\cref{lem:m_connectivity_and_contractibility} shows that $\Delta(\facG_{1},\cdots, \facG_{m})/\facF$ is contractible. Otherwise, $\Delta(\facG_{1},\cdots, \facG_{m})/\facF= \Delta\cap (\crosssection/\facF)$.
\end{proof}

We remark that this theorem holds more generally, for any affine semigroup $\affinesemigp$ 
whose poset of regions $\regionr{\arngmt}$ is in bijection with $\facelattice(\affinesemigp)$. 

\begin{corollary}
\label{cor:non_acyclic_chains_of_local_coho_of_maximal_id} 
\cref{lem:index_minimal_open_sets} and~\cref{thm:non_acyclic_chains_of_local_coho_of_maximal_id} hold when $\affinesemigp$ is an affine semigroup such that $\facelattice(\affinesemigp)$ is in bijection to the poset of regions $\regionr{\arngmt}$ of the hyperplane arrangement of $\realize\affinesemigp$.
\end{corollary}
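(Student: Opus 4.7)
The plan is to inspect the proofs of Lemma~\ref{lem:index_minimal_open_sets} and Theorem~\ref{thm:non_acyclic_chains_of_local_coho_of_maximal_id} and isolate exactly where the simpliciality of the cross section $\crosssection$ was used, then show that each such use can be replaced by the weaker hypothesis $\facelattice(\affinesemigp) \cong \regionR{\arngmt}$. The simplicial case was convenient for bookkeeping because subsets of $[\dimd]$ simultaneously index faces of $\crosssection$ and regions of $\arngmt$; under the weaker hypothesis, the same bookkeeping works via the given bijection. So the strategy is essentially a translation: relabel every ``$\setS \subseteq [\dimd]$'' argument by its image under the bijection $\facelattice(\affinesemigp) \to \regionR{\arngmt}$ and verify that every intermediate object (overlap class, extension of $\ridI$, vertex figure) continues to make sense.

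First I would re-establish~\cref{lem:index_minimal_open_sets}. The key structural inputs to its proof were: (i) the existence, for each face $\facF$, of a region $\regr_{\facF}$ of $\arngmt$ realizing $(\affinesemigp - \N\facF) \cap \regr_{\facF}$ as the support of the localization, and (ii) the fact that overlap classes of $\ridI$ are exactly $[0, \facG]$ with $\facG \in \max(\Delta)$, with extensions of the form $[0, \facG \cup (-\facF)]$ under localization at $\facF$. Statement (i) is exactly the content of the hypothesis $\facelattice(\affinesemigp) \cong \regionR{\arngmt}$, and statement (ii) is purely a fact about radical monomial ideals of $\field[\affinesemigp]$, independent of whether $\crosssection$ is simplicial. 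So the bijection $\mathcal{G}(\affinesemigp/\ridI)_{\facF} \longleftrightarrow \bigcap\max(\Delta)_{\facF}$ persists verbatim.

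Next I would redo the proof of~\cref{thm:non_acyclic_chains_of_local_coho_of_maximal_id}. Fix a minimal open set $\setS$ labelled by $(\facF,\facG)$ and write $\facG = \bigcap_{i=1}^{m}\facG_i$ with $\facG_i \in \max(\Delta)$. The identification of the $\vecu$-graded piece of the generalized Ishida complex with the shifted chain complex of $\Delta(\facG_1,\dots,\facG_m)/\facF$ is a term-by-term computation from~\cref{def:gen_ishida} that only uses the description of graded pieces of $\field[\affinesemigp - \N\facH]$ in terms of the region $\regr_{\facH}$; under our hypothesis this description still holds. When $\facF \neq \facG$, the complex $\Delta(\facG_1,\dots,\facG_m)/\facF$ is a polyhedral subcomplex all of whose maximal faces contain (the image of) $\facG \supsetneq \facF$, so~\cref{lem:m_connectivity_and_contractibility} applies and yields contractibility, hence vanishing cohomology. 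When $\facF = \facG$, by definition $\Delta(\facG_1,\dots,\facG_m)/\facF = \Delta \cap (\crosssection/\facF)$, giving the stated reduced simplicial cohomology.

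The main obstacle I expect is a conceptual rather than technical one: making sure that the correspondence between cumulative regions and faces is natural enough that every appearance of ``$\regr_{\facF}$'' in the simplicial proofs really is the image of $\facF$ under the hypothesized bijection (as opposed to some artifact of the convenient $\setS \subseteq [\dimd]$ labelling). Once one checks that the bijection is compatible with the order structure, with taking faces, and with the vertex-figure construction used in~\cref{lem:m_connectivity_and_contractibility}, no further work is required; the computations from the simplicial case transfer without modification. In particular, no normality or smoothness assumption on the cone is needed, only that localization at each face carves out a distinct region, which is precisely the content of $\facelattice(\affinesemigp) \cong \regionR{\arngmt}$.
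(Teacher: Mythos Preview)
Your proposal is correct and follows essentially the same approach as the paper: both identify that the only place simpliciality is used is to guarantee that each degree $\vecu$ lies in a region corresponding to a unique minimal face $\facF$ with $\vecu \in \affinesemigp - \N\facF$, and both observe that this is precisely the content of the bijection $\facelattice(\affinesemigp) \cong \regionr{\arngmt}$. The paper's proof is more compressed---it states the key property in one sentence and asserts its equivalence with the bijection hypothesis---whereas you walk through both~\cref{lem:index_minimal_open_sets} and~\cref{thm:non_acyclic_chains_of_local_coho_of_maximal_id} separately and verify that the vertex-figure and contractibility arguments carry over; but the underlying idea is the same.
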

\begin{proof}
The property we used in the proof of~\cref{lem:index_minimal_open_sets} is that for any $\vecu$, there exists a unique minimal face $\facF$ such that $\vecu \in \affinesemigp-\facF$. This property holds if and only if $\regionr{\arngmt}$ is in bijection to $\facelattice(\affinesemigp)$. 
\end{proof}

\begin{figure*}[t!]
\centering
\tdplotsetmaincoords{90}{90} 
\begin{tikzpicture}[tdplot_main_coords, scale=1]
\tdplotsetrotatedcoords{40}{-10}{30}
\foreach \x in {0,...,4}
	{
		\draw[step=1cm,gray,very thin, tdplot_rotated_coords] (\x,0,0) -- (\x,4,0);
		\draw[step=1cm,gray,very thin, tdplot_rotated_coords] (0,\x,0) -- (4,\x,0);
		\draw[step=1cm,gray,very thin, tdplot_rotated_coords] (\x,0,1) -- (\x,4,1);
		\draw[step=1cm,gray,very thin, tdplot_rotated_coords] (0,\x,1) -- (4,\x,1);
		\draw[step=1cm,gray,very thin, tdplot_rotated_coords] (\x,0,2) -- (\x,4,2);
		\draw[step=1cm,gray,very thin, tdplot_rotated_coords] (0,\x,2) -- (4,\x,2);
		\draw[step=1cm,gray,very thin, tdplot_rotated_coords] (\x,0,3) -- (\x,4,3);
		\draw[step=1cm,gray,very thin, tdplot_rotated_coords] (0,\x,3) -- (4,\x,3);
		\draw[step=1cm,gray,very thin, tdplot_rotated_coords] (\x,0,4) -- (\x,4,4);
		\draw[step=1cm,gray,very thin, tdplot_rotated_coords] (0,\x,4) -- (4,\x,4);

		\draw[step=1cm,gray,very thin, tdplot_rotated_coords] (0,\x,0) -- (0,\x,4);
		\draw[step=1cm,gray,very thin, tdplot_rotated_coords] (0,0,\x) -- (0,4,\x);
		\draw[step=1cm,gray,very thin, tdplot_rotated_coords] (1,\x,0) -- (1,\x,4);
		\draw[step=1cm,gray,very thin, tdplot_rotated_coords] (1,0,\x) -- (1,4,\x);
		\draw[step=1cm,gray,very thin, tdplot_rotated_coords] (2,\x,0) -- (2,\x,4);
		\draw[step=1cm,gray,very thin, tdplot_rotated_coords] (2,0,\x) -- (2,4,\x);
		\draw[step=1cm,gray,very thin, tdplot_rotated_coords] (3,\x,0) -- (3,\x,4);
		\draw[step=1cm,gray,very thin, tdplot_rotated_coords] (3,0,\x) -- (3,4,\x);
		\draw[step=1cm,gray,very thin, tdplot_rotated_coords] (4,\x,0) -- (4,\x,4);
		\draw[step=1cm,gray,very thin, tdplot_rotated_coords] (4,0,\x) -- (4,4,\x);
	};
	
	\draw[thick,->,tdplot_rotated_coords] (0,0,0) -- (6,0,0) node[anchor=south west]{$x$}; 
	\draw[thick,->,tdplot_rotated_coords] (0,0,0) -- (0,5.5,0) node[anchor=south east]{$y$}; 
	\draw[thick,->,tdplot_rotated_coords] (0,0,0) -- (0,0,5.5) node[anchor=south]{$z$};
	\draw[thick,->,tdplot_rotated_coords] (0,0,0) -- (0,4.5,4.5);
	\draw[thick,dashed,->,tdplot_rotated_coords] (0,0,0) -- (4.5,0,4.5);
	\draw[thick, ->,tdplot_rotated_coords] (0,0,0) -- (4.5,4.5,4.5);

	\foreach \y in {0,...,4}
	\draw[black,fill=stdmcolor,tdplot_rotated_coords] (0,0,\y) circle (2 pt);
	\draw[black,fill=stdmcolor,tdplot_rotated_coords] (0,1,1) circle (2 pt);
	\draw[black,fill=stdmcolor,tdplot_rotated_coords] (0,1,2) circle (2 pt);
	\draw[black,fill=stdmcolor,tdplot_rotated_coords] (0,2,2) circle (2 pt);
	\draw[black,fill=stdmcolor,tdplot_rotated_coords] (0,1,3) circle (2 pt);
	\draw[black,fill=stdmcolor,tdplot_rotated_coords] (0,2,3) circle (2 pt);
	\draw[black,fill=stdmcolor,tdplot_rotated_coords] (0,3,3) circle (2 pt);
	\draw[black,fill=stdmcolor,tdplot_rotated_coords] (0,1,4) circle (2 pt);
	\draw[black,fill=stdmcolor,tdplot_rotated_coords] (0,2,4) circle (2 pt);
	\draw[black,fill=stdmcolor,tdplot_rotated_coords] (0,3,4) circle (2 pt);
	\draw[black,fill=stdmcolor,tdplot_rotated_coords] (0,4,4) circle (2 pt);

	\foreach \y in {0,...,3}
	\draw[black,fill=stdmcolor,tdplot_rotated_coords] (1,0,1+\y) circle (2 pt);

	\foreach \y in {0,...,3}
	\draw[black,fill=stdmcolor,tdplot_rotated_coords] (1,1+\y,1+\y) circle (2 pt);
	
	\draw[black,fill=stdmcolor,tdplot_rotated_coords] (2,2,2) circle (2 pt);

	\draw[black,fill=stdmcolor,tdplot_rotated_coords] (1,1,2) circle (2 pt);
	\draw[black,fill=stdmcolor,tdplot_rotated_coords] (1,1,3) circle (2 pt);
	\draw[black,fill=stdmcolor,tdplot_rotated_coords] (1,2,3) circle (2 pt);
	\draw[black,fill=stdmcolor,tdplot_rotated_coords] (1,1,4) circle (2 pt);
	\draw[black,fill=stdmcolor,tdplot_rotated_coords] (1,2,4) circle (2 pt);
	\draw[black,fill=stdmcolor,tdplot_rotated_coords] (1,3,4) circle (2 pt);

	\draw[black,fill=stdmcolor,tdplot_rotated_coords] (2,0,2) circle (2 pt);
	\draw[black,fill=stdmcolor,tdplot_rotated_coords] (2,1,2) circle (2 pt);
	
	\draw[black,fill=stdmcolor,tdplot_rotated_coords] (2,0,3) circle (2 pt);
	\draw[black,fill=stdmcolor,tdplot_rotated_coords] (2,1,3) circle (2 pt);
	\draw[black,fill=stdmcolor,tdplot_rotated_coords] (2,2,3) circle (2 pt);
	\draw[black,fill=stdmcolor,tdplot_rotated_coords] (2,3,3) circle (2 pt);
	\draw[black,fill=stdmcolor,tdplot_rotated_coords] (3,0,3) circle (2 pt);
	\draw[black,fill=stdmcolor,tdplot_rotated_coords] (3,1,3) circle (2 pt);
	\draw[black,fill=stdmcolor,tdplot_rotated_coords] (3,2,3) circle (2 pt);
	\draw[black,fill=stdmcolor,tdplot_rotated_coords] (3,3,3) circle (2 pt);
	\draw[black,fill=stdmcolor,tdplot_rotated_coords] (2,0,4) circle (2 pt);
	\draw[black,fill=stdmcolor,tdplot_rotated_coords] (2,1,4) circle (2 pt);
	\draw[black,fill=stdmcolor,tdplot_rotated_coords] (2,2,4) circle (2 pt);
	\draw[black,fill=stdmcolor,tdplot_rotated_coords] (2,3,4) circle (2 pt);
	\draw[black,fill=stdmcolor,tdplot_rotated_coords] (2,4,4) circle (2 pt);
	\draw[black,fill=stdmcolor,tdplot_rotated_coords] (3,0,4) circle (2 pt);
	\draw[black,fill=stdmcolor,tdplot_rotated_coords] (3,1,4) circle (2 pt);
	\draw[black,fill=stdmcolor,tdplot_rotated_coords] (3,2,4) circle (2 pt);
	\draw[black,fill=stdmcolor,tdplot_rotated_coords] (3,3,4) circle (2 pt);
	\draw[black,fill=stdmcolor,tdplot_rotated_coords] (3,4,4) circle (2 pt);
	\draw[black,fill=stdmcolor,tdplot_rotated_coords] (4,0,4) circle (2 pt);
	\draw[black,fill=stdmcolor,tdplot_rotated_coords] (4,1,4) circle (2 pt);
	\draw[black,fill=stdmcolor,tdplot_rotated_coords] (4,2,4) circle (2 pt);
	\draw[black,fill=stdmcolor,tdplot_rotated_coords] (4,3,4) circle (2 pt);
	\draw[black,fill=stdmcolor,tdplot_rotated_coords] (4,4,4) circle (2 pt);

\end{tikzpicture}
\caption{Degrees of $Q = \N \left[\begin{smallmatrix}0&1&0&1\\0&0&1&1\\ 1&1&1&1 \end{smallmatrix}\right]$}
\label{fig:3d_stdpairs}
\end{figure*}

\begin{figure*}[t!]
\centering
\begin{tikzpicture}[scale=0.8]
  \node (max) at (0,4) {$\Z\affinesemigp$};
  \node (abd) at (-3,2) {$\affinesemigp-\facF_{1}$};
  \node (abc) at (-1,2) {$\affinesemigp-\facF_{2}$};
  \node (bcd) at (1,2) {$\affinesemigp-\facF_{3}$};
  \node (acd) at (3,2) {$\affinesemigp-\facF_{4}$};
  \node (ad) at (-3,0) {$\affinesemigp-\langle \veca_{1}\rangle$};
  \node (ab) at (-1,0) {$\affinesemigp-\langle \veca_{2}\rangle$};
  \node (bc) at (1,0) {$\affinesemigp-\langle \veca_{3}\rangle$};
  \node (cd) at (3,0) {$\affinesemigp-\langle \veca_{4}\rangle$};
  \node (min) at (0,-4) {$\affinesemigp$};
  \draw (max) -- (abc) (max) -- (abd) (max) -- (acd) (max) -- (bcd);
  \draw (abc)--(ab) (abc) -- (bc) (bcd) -- (bc) (bcd) -- (cd) (abd) -- (ab) (abd) -- (ad);
\draw[preaction={draw=white, -,line width=6pt}] (acd) -- (ad) (acd) --(cd);
  \draw (ad) -- (min) (ab) -- (min)  (bc)--(min) (cd) -- (min);

\draw[->] (4,0) -- (6,0);

  \node (zmax) at (10,4) {$\Z\affinesemigp$};
  \node (zabd) at (7,2) {$\regr_{1}$};
  \node (zabc) at (9,2) {$\regr_{2}$};
  \node (zbcd) at (11,2) {$\regr_{3}$};
  \node (zacd) at (13,2) {$\regr_{4}$};
  \node (zad) at (7,0) {$\regr_{1,4}$};
  \node (zab) at (9,0) {$\regr_{1,2}$};
  \node (zbc) at (11,0) {$\regr_{2,3}$};
  \node (zcd) at (13,0) {$\regr_{3,4}$};
  \node (zd) at (7,-2) {$\regr_{1,3,4}$};
  \node (za) at (9,-2) {$\regr_{1,2,4}$};
  \node (zb) at (11,-2) {$\regr_{1,2,3}$};
  \node (zc) at (13,-2) {$\regr_{2,3,4}$};
  \node (zmin) at (10,-4) {$\affinesemigp$};
  \draw (zmax) -- (zabc) (zmax) -- (zabd) (zmax) -- (zacd) (zmax) -- (zbcd);
  \draw (zabc)--(zab) (zabc) -- (zbc) (zbcd) -- (zbc) (zbcd) -- (zcd) (zabd) -- (zab) (zabd) -- (zad);
\draw[preaction={draw=white, -,line width=6pt}] (zacd) -- (zad) (zacd) --(zcd);
  \draw (zad) -- (za) (zad) -- (zd) (zab) -- (za) (zab) -- (zb) (zbc)--(zb) (zbc) -- (zc);
    \draw[preaction={draw=white, -,line width=6pt}] (zcd)--(zd) (zcd) -- (zc);
      \draw (zmin) -- (za) (zmin) -- (zb) (zmin) -- (zc) (zmin) -- (zd);
\end{tikzpicture}
\caption{Hasse diagrams of $\catname{Cat}_{\affinesemigp}$ and $\regionr{\arngmt}$ of the Segre embedding}
\label{fig:3d_hasse_diagram}
\end{figure*}

\begin{example}[Counterexample: Segre Embedding]
\label{ex:segre}
We consider the affine semigroup, $\affinesemigp =\N \left[\begin{smallmatrix}0 & 1 & 1 & 0 \\ 0 & 0 & 1 & 1 \\ 1 & 1 & 1 & 1 \end{smallmatrix}\right],$ which is depicted in~\cref{fig:3d_stdpairs}. Let $\veca_{i}$ be the $i$-th column of $\left[\begin{smallmatrix}0 & 1 & 1 & 0 \\ 0 & 0 & 1 & 1 \\ 1 & 1 & 1 & 1 \end{smallmatrix}\right].$ Also denote the facets $\facF_{\smalli}$ and the hyperplane arrangement $\arngmt=\{ \hplane_{1},\hplane_{2},\hplane_{3},\hplane_{4}\}$ as below.
\begin{align*}
\facF_{1}&:= \langle \veca_{1},\veca_{2}\rangle, & \facF_{2}&:= \langle \veca_{2},\veca_{3}\rangle, & \facF_{3}&:= \langle \veca_{3},\veca_{4}\rangle, &\facF_{4}&:= \langle \veca_{4},\veca_{1}\rangle \\
\hplane_{1}^{(+)}&:= \{ y > 0\}, & \hplane_{2}^{(+)}&:= \{ z > x\}, & \hplane_{3}^{(+)}&:= \{ z > y\},& \hplane_{4}^{(+)}&:= \{ x > 0\}.
\end{align*}
For any face $\facF$, label $\facF$ by the subset of $\{1,2,3,4\}$ whose corresponding facet contains $\facF$. For example, $\<\veca_{1}\>$ is indexed by $\{1,4\}$. Then we have the desired injection from $\facelattice(\affinesemigp)$ to $\regionr{\arngmt}$ by sending a face $\facF$ to $\regr_{\facF}:=\realize(\affinesemigp-\N\facF) \setminus \bigcup_{\facG <\facF}\realize(\affinesemigp-\N\facG)$. This relationship is depicted in~\cref{fig:3d_hasse_diagram}. Note that this is \textbf{not} a bijection; for example, 
\[
(0,1,0)^{t} = (1,1,1)^{t}-(1,0,1)^{t} = (0,1,1)^{t}-(0,0,1)^{t}
\] is in both $\affinesemigp-\<\veca_{1} \>$ and $\affinesemigp-\<\veca_{2} \>$ but not in $\affinesemigp$. Hence, $(0,1,0)^{t} \in \regr_{1,2,4}$. Therefore, we may not directly apply~\cref{cor:non_acyclic_chains_of_local_coho_of_maximal_id}. However, still we may apply~\cref{cor:reisner} to calculate its local cohomology by investigating the graded parts of the generalized Ishida complex corresponding to those ``hidden" regions, i.e., regions in the cokernel of the map $\facelattice(\affinesemigp) \to \regionr{\arngmt}$.
\end{example}

\section{Local cohomology duality for simplicial affine semigroup rings}
\label{sec:duality}

In this section, we relate the local cohomology of an affine semigroup ring $\field[\affinesemigp]$ supported on a radical monomial ideal $\ridI$ to the local cohomology supported on the maximal ideal of the quotient of $\field[\affinesemigp]/\ridI$. In the case of Stanley--Reisner rings, it is straightforward to determine the vanishing of such cohomologies using available formulas for (Hilbert series of) local cohomology due to Hochster and to Terai~\cite{Terai99}.

Throughout this section, we use the same notation as in Section~\ref{sec:hochster_thm}.

\subsection{Separated polytope and Alexander duality}
\label{subsec:alexander_duality}

First of all, we rigorously introduce the notion of cutting a face of a polytope. 
Suppose $\polytope$ is a polytope embedded in $\R^{d}$.
Let $\facF$ be a face of $\polytope$ and let $\vecc \in \R^d$ be an outer normal vector for a supporting hyperplane $\hplane_{\facF}$ of $\facF$. We denote this situation $\facF = \face_{\vecc}(\polytope)$.
Let $\hplane_{\facF}'$ be a translation of $\hplane_{\facF}'$ in the direction of $-\vecc$ which separates vertices of $\facF$ and all other vertices of $\polytope$. 
The \emph{separated polytope} $\polytope \minus \facF$ is the polytope defined as the intersection of $\polytope$ and the outer half space of $\hplane_{\facF}'$. 
For example, if $\facF$ is a vertex, then the separated polytope $\polytope\backslash\facF$ is combinatorially equivalent to the vertex figure $\polytope/\facF$. 
We remark that $\facelattice(\polyhedron\minus \facF)$ does not depend on the choice of $\hplane_{\facF}'$. We also refer to the construction of $\polyhedron \minus \facF$ as \emph{cutting the face $\facF$ from $\polyhedron$}.

We recall the statements for both combinatorial and topological Alexander duality below for reference. Note that all simplicial, CW, and singular (co)homology in this article is reduced and with coefficients over $\Z$.

\begin{theorem}[Alexander duality]
\label{thm:Alexander_dual}
For any compact locally contractible nonempty proper topological subspace $K$ of a $(\dimd+1)$-dimensional sphere $S^{\dimd}$, $\tilde{H}_{i}(S^{\dimd} \minus K) \cong \tilde{H}^{\dimd-i-1}(K)$. For any polyhedral subcomplex $\Delta$ of the boundary of a polytope $P$, $\tilde{H}_{i}(\Delta) \cong \tilde{H}^{\dimd-i-3}(\Delta^{\ast})$ where $\Delta^{\ast}:=(\facelattice(\polyhedron) \minus \Delta)^{\text{op}}$ is the \emph{Alexander dual} of $\Delta$, which is a subcomplex of the dual polytope $\polyhedron^{\text{op}}$ of $\polyhedron$.
\end{theorem}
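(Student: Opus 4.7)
The plan is to obtain both statements by appeal to classical results in algebraic topology. The first (topological) statement is the familiar Alexander duality in spheres, for which I would cite Hatcher's \emph{Algebraic Topology} (Theorem 3.44) or Spanier, Chapter 6; any polyhedral subspace of $S^{\dimd}$ is automatically compact and locally contractible, so no extra hypothesis-checking is needed beyond noting that $|\Delta|$ is nonempty and proper.

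For the combinatorial statement, my plan is to reduce it to the topological version. In the settings where this theorem is applied, $P$ is $(\dimd-1)$-dimensional (for instance, as the transverse section of the $\dimd$-dimensional cone $\realize \affinesemigp$), so its boundary satisfies $\partial P \cong S^{\dimd-2}$. Applying topological Alexander duality to $|\Delta^{\ast}| \subseteq \partial P$ gives
\[
\tilde{H}_{i}(\partial P \setminus |\Delta^{\ast}|) \cong \tilde{H}^{\dimd-i-3}(|\Delta^{\ast}|).
\]
The remaining step is to exhibit a homotopy equivalence $|\Delta| \simeq \partial P \setminus |\Delta^{\ast}|$. I would construct this via the barycentric subdivision $\mathrm{sd}(\partial P)$: each face $\facF \in \facelattice(P)$ contributes a vertex $b_{\facF}$, the dual complex $\Delta^{\ast}$ realizes (via the face-lattice opposite built into its definition) as the subcomplex of $\mathrm{sd}(\partial P)$ spanned by chains $\facF_{0} < \cdots < \facF_{k}$ with $\facF_{0} \notin \Delta$, and the complement of the open star of this subcomplex deformation-retracts onto the subcomplex spanned by barycenters of faces in $\Delta$, which is canonically homeomorphic to $|\Delta|$.

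The main obstacle I expect is verifying the barycentric-subdivision picture in the general polytopal (rather than simplicial) setting, and checking that the formal face-lattice opposite $(\facelattice(P) \setminus \Delta)^{\text{op}}$ in the definition of $\Delta^{\ast}$ matches its geometric realization inside the dual polytope $P^{\text{op}}$. These identifications are standard in polyhedral combinatorics (see, for instance, Bj\"orner's appendix in the Handbook of Combinatorics, or Ziegler's \emph{Lectures on Polytopes}), so in a write-up I would cite them rather than reconstruct the combinatorial-topology argument from first principles.
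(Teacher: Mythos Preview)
Your proposal is correct and matches the paper's approach: the paper's proof is purely citation-based, referring to Hatcher's \emph{Algebraic Topology}, Theorem~3.44, for the topological statement and to standard references for the combinatorial one. You cite the same Hatcher result and, for the combinatorial part, go slightly further by sketching the barycentric-subdivision reduction before deferring to the literature; this extra detail is fine but not something the paper itself provides.
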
 
\begin{proof}
For the topological Alexander duality, refer to~\cite{MR1867354}*{Theorem 3.44}. 
For the combinatorial Alexander duality, refer to~\cites{MR2556456,MR1119198}.
\end{proof}

For a given polytope $\polyhedron$ and a proper polyhedral subcomplex $\Delta$, the \emph{abstract dual polyhedral complex} is the set $\Delta^{\ast,\text{op}}:=(\facelattice(\polyhedron) \minus \Delta)^{\text{op}}$ with the partial order reversed. We use this name because $\Delta^{\ast}$ is an abstract polyhedral complex~\cite{Ziegler95}*{Corollary 2.14}. Let $\rchaincpx{\polyhedron}$ (resp. $\rchaincpx{\Delta}$) be the reduced cochain complex of $\polyhedron$ for the dual polytope. Let $\rchaincpx{\Delta^{\ast,\text{op}}}_{\text{unmoved}}$ be a chain complex which deletes components (and componentwise maps) corresponding to faces not in $\Delta^{\ast,\text{op}}$ from $\rchaincpx{\polyhedron}$. We call $\rchaincpx{\Delta^{\ast,\text{op}}}_{\text{unmoved}}$ the \emph{unmoved Alexander dual chain complex} of $\Delta^{\ast,\text{op}}$. This is still a well-defined chain complex, since it coincides with a reduced chain complex of the Alexander dual of $\Delta$ (with reversed indices) for homology or cohomology. Moreover, for any $i \in \Z$,

\begin{corollary}
\label{cor:unmoved_Alexander_dual}
 $H_{i}(\rchaincpx{\Delta}) \cong H_{i+1}(\rchaincpx{\Delta^{\ast,\text{op}}}_{\text{unmoved}})$ and $H^{i}(\rchaincpx{\Delta}) \cong H^{i+1}(\rchaincpx{\Delta^{\ast,\text{op}}}_{\text{unmoved}})$.
\end{corollary}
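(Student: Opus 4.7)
The plan is to identify $\rchaincpx{\Delta^{\ast,\text{op}}}_{\text{unmoved}}$ with the relative reduced chain complex $\rchaincpx{\polyhedron}/\rchaincpx{\Delta}$, and then exploit the contractibility of the polytope $\polyhedron$ via the long exact sequence of the pair $(\polyhedron,\Delta)$.

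First, I would observe that because $\Delta$ is a subcomplex of $\polyhedron$ (downward-closed in $\facelattice(\polyhedron)$), the inclusion on generators induces a short exact sequence of reduced chain complexes
\[
0 \longrightarrow \rchaincpx{\Delta} \longrightarrow \rchaincpx{\polyhedron} \longrightarrow \rchaincpx{\polyhedron}/\rchaincpx{\Delta} \longrightarrow 0.
\]
The quotient $\rchaincpx{\polyhedron}/\rchaincpx{\Delta}$ has as free generators exactly the faces of $\polyhedron$ \emph{not} in $\Delta$, i.e.\ the elements of $\Delta^{\ast,\text{op}}$, with $\polyhedron$-dimension grading retained and with differentials obtained from those of $\rchaincpx{\polyhedron}$ after killing all summands that land in $\Delta$. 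By the paper's own description, this is precisely $\rchaincpx{\Delta^{\ast,\text{op}}}_{\text{unmoved}}$, since the construction of the latter is described as ``deleting components (and componentwise maps) corresponding to faces not in $\Delta^{\ast,\text{op}}$''.

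Second, since the polytope $\polyhedron$ is convex hence contractible, its reduced chain complex $\rchaincpx{\polyhedron}$ is acyclic. The resulting long exact sequence in reduced homology has $H_i(\rchaincpx{\polyhedron})=0$ in every degree, so the connecting homomorphism
\[
H_{i+1}\!\left(\rchaincpx{\Delta^{\ast,\text{op}}}_{\text{unmoved}}\right) \;\xrightarrow{\;\sim\;}\; H_i(\rchaincpx{\Delta})
\]
is an isomorphism, which is the first claim. For the cohomology assertion, I would either apply $\Hom_{\Z}(-,\Z)$ to the short exact sequence above (each term is degreewise free over $\Z$, so the dualized sequence is still short exact) and run the long exact sequence in cohomology, or equivalently invoke Theorem~\ref{thm:Alexander_dual} together with the index reconciliation $k\leftrightarrow d-1-k$ that converts ``moved'' degrees on the Alexander dual side to ``unmoved'' ones.

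The main obstacle is the bookkeeping in the first step: the ``unmoved'' chain complex is defined in the paper by looking at $\polyhedron$ through the lens of the dual polytope $\polyhedron^{\text{op}}$, and one must verify carefully that the differentials inherited from $\rchaincpx{\polyhedron}$ (after deleting summands) really do assemble into the quotient differentials of $\rchaincpx{\polyhedron}/\rchaincpx{\Delta}$—in particular checking conventions for the empty face and the top-dimensional face of $\polyhedron$. Once that identification is made precise, the rest is the routine long exact sequence argument for a contractible pair.
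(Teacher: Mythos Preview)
Your argument is correct, but it differs from the paper's route. The paper invokes combinatorial Alexander duality (Theorem~\ref{thm:Alexander_dual}) as a black box, obtaining $\tilde{H}_i(\Delta)\cong \tilde{H}^{d-i-3}(\Delta^{\ast})$, and then simply observes that the unmoved complex $\rchaincpx{\Delta^{\ast,\text{op}}}_{\text{unmoved}}$ is the Alexander dual cochain complex with its grading left in the ambient $\polyhedron$-indexing, so that $\tilde{H}^{d-i-3}(\Delta^{\ast}) \cong H_{i+1}(\rchaincpx{\Delta^{\ast,\text{op}}}_{\text{unmoved}})$ by a degree comparison. Your approach instead identifies the unmoved complex with the relative quotient $\rchaincpx{\polyhedron}/\rchaincpx{\Delta}$ and uses the long exact sequence of the pair together with the acyclicity of $\rchaincpx{\polyhedron}$. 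This is in fact the standard elementary \emph{proof} of combinatorial Alexander duality, so you are reproving from scratch what the paper cites. The upside of your route is that it is self-contained and makes the $+1$ shift transparent as the connecting homomorphism; the paper's route is shorter given that Alexander duality is already on the table. Your caveat about bookkeeping (empty face, top face, and the paper's ``dual polytope'' phrasing of $\rchaincpx{\polyhedron}$) is well placed but does not affect the validity of either argument.
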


\begin{proof}
From~\cref{thm:Alexander_dual}, observe that $H^{\dimd-i-3}(\rchaincpx{\Delta^{\ast}}) \cong H_{i+1}(\rchaincpx{\Delta^{\ast,\text{op}}}_{\text{unmoved}})$ by comparing degrees of their chain complexes. 
The cohomology case is similar.
\end{proof}

\subsection{Duality of graded local cohomologies}
\label{subsec:duality}

Let $\field[\affinesemigp]$ be an affine semigroup ring of dimension $d:=\dim\field[\affinesemigp]$ as defined in~\cref{sec:hochster_thm}. Suppose that $\affinesemigp$ has no hidden regions, i.e., the hyperplane arrangment $\arngmt$ consisting of minimal supporting hyperplanes of $\realize\affinesemigp$ has poset of regions $\regionr{\arngmt}$ canonically bijective to
$\facelattice(\affinesemigp)$~\cite{Edelman84}*{Lemma 1.3}. For example, this is the case when $\realize{\affinesemigp}$ is a cone over a simplex.
Let $\ridI$ be a monomial radical ideal. Let $\crosssection$ be the transverse section of $\R\affinesemigp$ with its index sets defined in~\cref{sec:hochster_thm}. Then, there is a duality between the local cohomology of $\field[\affinesemigp]/\ridI$ with the maximal ideal $\maxid$ support and the local cohomology of $\field[\affinesemigp]$ with $\ridI$-support as follow. 

\begin{theorem}
\label{thm:duality_of_local_cohomologies}
Given a face $\facF \in \facelattice(\affinesemigp)$, let $\setS_{\facF}$ be the minimal open set in the original degree space indexed by $(\facF,\facF)$ from~\cref{lem:index_minimal_open_sets} if it exists. Otherwise, let $\setS_{\facF}$ be $\regr_{\facF}\cap \left( \bigcup\deg(\field[\N\genset]/J_{\Delta})\right)$. 
For any $\vecu \in \setS_{\facF}$ and $\vecv \in \regr_{\facF^{c}} \cap \Z\affinesemigp$,
\[
(\localcoho{\maxid}{\smalli}{\field[\affinesemigp]/\ridI})_{\vecu} \cong (\localcoho{\ridI}{\dimd-\smalli}{\field[\affinesemigp]})_{\vecv}
\]
\end{theorem}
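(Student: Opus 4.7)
Proof plan.

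The plan is to compute both sides of the claimed isomorphism using the generalized Ishida complex (Theorem~\ref{thm:Ishida_gen}) and then to match the resulting two chain complexes via Alexander duality (Theorem~\ref{thm:Alexander_dual}, and specifically Corollary~\ref{cor:unmoved_Alexander_dual}). Throughout, the no-hidden-regions hypothesis is what makes the combinatorics line up, since it guarantees that membership of a degree in a localization $\field[\affinesemigp - \N\facG]$ is detected by the region $\regr_\facF$ in which the degree sits, rather than by anything finer.

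For the left-hand side, by Corollary~\ref{cor:non_acyclic_chains_of_local_coho_of_maximal_id} (the no-hidden-regions version of Theorem~\ref{thm:non_acyclic_chains_of_local_coho_of_maximal_id}), whenever $\vecu \in \setS_\facF$ lies in a minimal open set indexed by a diagonal pair $(\facF, \facF)$, the graded piece equals $\tilde{H}^{i-\dim\facF-1}_{\text{simp}}((\crosssection/\facF)\cap \Delta)$. In the other case (when no such open set exists, equivalently $\facF \notin \bigcap\max(\Delta)$, so that $\setS_\facF = \regr_\facF\cap \bigcup\deg(\field[\affinesemigp]/\ridI)$), the proof of Theorem~\ref{thm:non_acyclic_chains_of_local_coho_of_maximal_id} shows the $\vecu$-graded Ishida complex is the shifted cochain complex of a contractible vertex figure, hence vanishes in every cohomological degree.

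For the right-hand side, apply the generalized Ishida complex to $M = \field[\affinesemigp]$. The $\vecv$-graded piece of $\ishida^k = \bigoplus_{\facG \in \facelattice(\crosssection_\ridI)^{k-1}}\field[\affinesemigp-\N\virtual{\facG}]$ is $\bigoplus \field$, summed over those $\facG$ with $\vecv \in \affinesemigp - \N\virtual{\facG}$. Because $\vecv \in \regr_{\facF^c}\cap\Z\affinesemigp$ and because no-hidden-regions makes the map $\facelattice(\affinesemigp)\to \regionr{\arngmt}$ a bijection, the membership condition collapses to the purely combinatorial one $\virtual{\facG}\supseteq\facF$. Hence the $\vecv$-graded piece of $\ishida^\bullet$ coincides, as a cochain complex, with the complex of faces of $\crosssection_\ridI$ containing $\facF$, with the differentials inherited from the incidence function $\incidence$. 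The correspondence between radical monomial ideals and subcomplexes, together with~\cref{lem:closed_subpolytope}, identifies this with the unmoved Alexander dual (inside the vertex figure $\crosssection/\facF$ of the $(d-1)$-polytope $\crosssection$) of the link complex $(\crosssection/\facF)\cap\Delta$ appearing on the LHS.

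The final step is then to invoke Corollary~\ref{cor:unmoved_Alexander_dual} in $\crosssection/\facF$: the $p$-th reduced cohomology of $(\crosssection/\facF)\cap\Delta$ equals the $(p{+}1)$-st cohomology of its unmoved Alexander dual, which sits naturally inside $\crosssection/\facF$. A careful bookkeeping of the dimension shift between the vertex figure $\crosssection/\facF$ (of dimension $d-\dim\facF-2$) and the ambient transverse section $\crosssection$ (of dimension $d-1$) converts the LHS index $p = i-\dim\facF-1$ into the RHS Ishida index $d-i$, yielding the desired isomorphism; the vanishing case of Step~1 is matched by the observation that the dual complex on the RHS is then also acyclic.

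The main obstacle. The hard part is Step~2/Step~3: showing that the chain complex extracted from the $\vecv$-graded piece of the Ishida complex is genuinely the unmoved Alexander dual of $(\crosssection/\facF)\cap\Delta$, with matching signs and matching degree shift. This requires (i) verifying that the isomorphism $\virtual{-}\colon \facelattice(\crosssection_\ridI) \to \facelattice(\affinesemigp)$ sends the subposet $\{\facG : \virtual{\facG}\supseteq \facF\}$ onto the right order-dual of the star of $\facF$ in $\Delta$; (ii) tracking the incidence signs so that the Ishida differential matches the Alexander coboundary; and (iii) accounting for the dimensional offset that converts $i - \dim\facF - 1$ into $d - i$ through the single-step $+1$ shift in Corollary~\ref{cor:unmoved_Alexander_dual} combined with duality inside the lower-dimensional vertex figure. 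The no-hidden-regions hypothesis is exactly what prevents ``extra'' Ishida contributions from the cokernel $\regionr{\arngmt}\setminus\facelattice(\affinesemigp)$ discussed in Example~\ref{ex:segre}, and without it the two chain complexes need not coincide.
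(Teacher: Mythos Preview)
Your overall strategy matches the paper's: compute the $\vecu$-graded piece of the left-hand side via \cref{thm:non_acyclic_chains_of_local_coho_of_maximal_id}, compute the $\vecv$-graded piece of the right-hand side from the generalized Ishida complex, and connect the two through Alexander duality. The handling of the left-hand side is fine. The gap is in Step~2/3, and it is substantive, not just bookkeeping.

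First, the membership condition is misidentified. For $\vecv \in \regr_{\facF^{c}}$ one has $\vecv \in \affinesemigp - \N\virtual{\facG}$ if and only if $\virtual{\facG} \supseteq \facF^{c}$, not $\virtual{\facG} \supseteq \facF$. (Check this in the two-variable case $\affinesemigp=\N^2$, $\ridI=\langle x\rangle$, $\facF=\{2\}$: your condition picks out the wrong components and gives the wrong cohomology.) This is exactly the set the paper calls $(\crosssection_{\ridI})_{\regr_{\facF^{c}}\cap\Z\affinesemigp}$.

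Second, and more seriously, even after correcting to $\virtual{\facG}\supseteq\facF^{c}$, the $\vecv$-graded Ishida complex is \emph{not} the unmoved Alexander dual of $(\crosssection/\facF)\cap\Delta$ inside the vertex figure $\crosssection/\facF$. It is the unmoved Alexander dual of a different subcomplex, namely $\polyhedron_{\facF^{c}}:=\facelattice(\crosssection_{\ridI})\setminus\{\facG:\virtual{\facG}\supseteq\facF^{c}\}$, and this duality is taken inside $\crosssection_{\ridI}$, not inside $\crosssection/\facF$. The two ambient polytopes $\crosssection_{\ridI}$ and $\crosssection/\facF$ are not combinatorially the same (indeed $\virtual{-}$ is not injective on $\facelattice(\crosssection_{\ridI})$ in general, so counting faces already fails), so one cannot simply invoke \cref{cor:unmoved_Alexander_dual} in $\crosssection/\facF$. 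What the paper does instead is: identify the $\vecv$-graded complex with the unmoved dual of $\polyhedron_{\facF^{c}}$ in $\crosssection_{\ridI}$ (\cref{lem:non_acyclic_chains_of_local_coho_over_radical}), then prove by a geometric cutting argument that $|\polyhedron_{\facF^{c}}|$ is homotopy equivalent to $S^{(d-1)-\dim\facF-1}\setminus\big((\crosssection/\facF)\cap\Delta\big)$ (\cref{lem:homotopic_between_original_and_cut_complexes} and \cref{lem:duality_on_vertex_figure}), and finally apply \emph{topological} Alexander duality in that sphere (\cref{cor:duality_between_complexes}). Your plan collapses precisely this homotopy step, and without it the two chain complexes you want to match by \cref{cor:unmoved_Alexander_dual} are not the same complex; only their cohomologies agree, and establishing that agreement is the content of the paper's Lemmas~6.7--6.9.
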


Some duality still holds in the case when hidden regions exist (only degrees outside of hidden regions are involved), but the statement is complicated and not very enlightening. The result is false for degrees corresponding to hidden regions, as seen in the following example.

\begin{example}[Continuation of~\cref{ex:segre}]
\label{ex:counterexample_segre_two}
Let $\ridI=\<x^{\left[\begin{smallmatrix} 0 \\ 1 \\ 2 \end{smallmatrix}\right]},x^{\left[\begin{smallmatrix} 1 \\ 1 \\ 2 \end{smallmatrix}\right]},x^{\left[\begin{smallmatrix} 2 \\ 1 \\ 2 \end{smallmatrix}\right]} \>$. It is a radical monomial ideal such that whose corresponding subcomplex $\Delta$ has $\facF_{1}$ and $\facF_{3}$ as facets. Then, for the grade $(0,1,0)^{t}$, notes that $x^{(0,1,0)^{t}} \in \ridI \cdot \field[\affinesemigp - \N\<\veca_{1}\>] \cap \ridI \cdot \field[\affinesemigp - \N\<\veca_{2}\>]$ and $x^{(0,1,0)^{t}} \not\in \field[\affinesemigp - \N\<\veca_{3}\>] \cup \field[\affinesemigp - \N\<\veca_{4}\>] \cup \field[\affinesemigp - \N\facF_{3}]$. This shows that the graded piece of $L^{\bullet}_{\maxid} \tensor{\field[\affinesemigp]} \field[\affinesemigp]/\ridI$, the Ishida complex of $\field[\affinesemigp]/\ridI$ supported at the monomial maximal ideal $\maxid$, is
\begin{align*}
L^{\bullet}_{\maxid} \tensor{\field[\affinesemigp]} \field[\affinesemigp]/\ridI : 0 \to 0 \to 0 \to 0 \to 0
\end{align*}

On the other hand, the transverse section of $\field[\affinesemigp]\ridI$ is also a rectangle whose vertices are embedded into $\facF_{2}$ or $\facF_{4}$ respectively. Because $(0,1,0)^{t} \in \regr_{1,2,4}$, $x^{(0,1,0)^{t}} \in \field[\affinesemigp-\N\facF_{i}]$ when $i=1,2,4$ only. The graded piece of $L^{\bullet}_{\ridI} \tensor{\field[\affinesemigp]} \field[\affinesemigp]$, the Ishida complex of $\field[\affinesemigp]$ supported at $\ridI$, is therefore
\begin{align*}
L^{\bullet}_{\ridI} \tensor{\field[\affinesemigp]} \field[\affinesemigp] : 0 \to \field^{4} \to \field^{3} \to \field \to 0 
\end{align*}
Consequently,
\[
\left(\localcoho{\ridI}{j}{\field[\affinesemigp]}\right)_{\deg = (0,1,0)^{t}} =\begin{cases} \field & j=1,2 \\ 0 & o.w. \end{cases}, \quad \left(\localcoho{\maxid}{j}{\field[\affinesemigp]/\ridI}\right)_{\deg = (0,1,0)^{t}} =0 \text{ for all }j.
\]
This contradicts the duality when the affine semigroup contains hidden regions.
\end{example}

As a corollary of Theorem~\ref{thm:duality_of_local_cohomologies},
\begin{corollary}
\label{cor:duality_of_local_cohomologies}
If $\affinesemigp$ is a simplicial affine semigroup, then~\cref{thm:duality_of_local_cohomologies} holds.
\end{corollary}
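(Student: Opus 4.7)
The plan is to reduce the corollary to a verification that a simplicial affine semigroup $\affinesemigp$ satisfies the standing hypothesis of \cref{thm:duality_of_local_cohomologies}, namely that the canonical embedding $\facelattice(\affinesemigp)\hookrightarrow\regionr{\arngmt}$ coming from $\facF\mapsto\regr_{\facF}$ is in fact a bijection. Once this is established, \cref{thm:duality_of_local_cohomologies} applies directly and yields the statement.

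First I would recall that $\affinesemigp$ is simplicial precisely when a transverse section $\crosssection$ of $\realize\affinesemigp$ is a $(\dimd-1)$-simplex. In this case the minimal hyperplane arrangement $\arngmt=\{\hplane_{1},\dots,\hplane_{\dimd}\}$ supporting the facets of $\realize\affinesemigp$ consists of exactly $\dimd$ linear hyperplanes in $\R^{\dimd}$ whose normal vectors are linearly independent (equivalently, whose intersection is $\{0\}$), since these normals are in bijection with the vertices of the dual simplex. A linear arrangement of $\dimd$ hyperplanes in linear general position in $\R^{\dimd}$ has exactly $2^{\dimd}$ regions, each one uniquely described by a sign vector $\setS\subseteq[\dimd]$ as in \cref{subsec:hyperplane}.

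Next I would compare this count with $|\facelattice(\affinesemigp)|$. Since $\crosssection$ is a $(\dimd-1)$-simplex, $\facelattice(\crosssection)\cong 2^{[\dimd]}$, and $\facelattice(\affinesemigp)\cong\facelattice(\crosssection)$, giving $|\facelattice(\affinesemigp)|=2^{\dimd}$. It then suffices to check that the labeling scheme from \cref{subsec:hochster} agrees with the sign-vector labeling of regions: a face $\facF\in\facelattice(\affinesemigp)$, viewed as a subset of $[\dimd]$ indexing the facets that contain it, is sent under the canonical embedding to $\regr_{\facF}$, which sits in the positive half-spaces of exactly the hyperplanes $\hplane_{i}$ with $i\in\facF$. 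Since both $\facelattice(\affinesemigp)$ and $\regionr{\arngmt}$ have the same cardinality $2^{\dimd}$ and the embedding of \cite{Edelman84}*{Lemma~1.3} is injective, it must be a poset bijection.

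The only subtle point, and the one I would treat as the main obstacle, is confirming linear general position of the facet normals of a simplicial cone. This is a short but essential argument: the $\dimd$ facet normals of a pointed $\dimd$-dimensional simplicial cone span $\R^{\dimd}$ because their span is the annihilator of the intersection of the facets, which is the apex $\{0\}$; hence they are linearly independent as there are exactly $\dimd$ of them. With this in hand, the bijection $\facelattice(\affinesemigp)\cong\regionr{\arngmt}$ follows, the hypothesis of \cref{thm:duality_of_local_cohomologies} is satisfied, and the corollary is immediate.
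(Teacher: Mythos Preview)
Your proposal is correct and matches the paper's intended reasoning. The paper does not give an explicit proof of the corollary; it relies on the observation already recorded in \cref{subsec:hochster} that for a simplicial affine semigroup the labeling of faces of $\crosssection$ by subsets of $[d]$ makes the poset of regions $\regionr{\arngmt}$ equal to $\facelattice(\crosssection)$, so that the standing hypothesis of \cref{thm:duality_of_local_cohomologies} is automatically satisfied. Your argument simply supplies the details behind that observation---the count $|\regionr{\arngmt}|=2^{\dimd}=|\facelattice(\affinesemigp)|$ via linear independence of the $\dimd$ facet normals, combined with injectivity of the canonical embedding---and then invokes the theorem, which is exactly the route the paper takes implicitly.
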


Note that $\facF^{c}:= \{ i \in [n] \mid i\not\in \facF\}$ is a well-defined face since it either exists on both $\facelattice(\affinesemigp)$ and $\regionr{\arngmt}$ simultaneously or on neither.

To proceed, assume that $\ridI$ is neither $0$ nor $\field[\affinesemigp]$. $\mathcal{G}(\field[\affinesemigp])$ consists of $\regr_{\facF} \cap \Z\affinesemigp$ for each face $\facF \in \facelattice(\affinesemigp)$. Given the transverse section $\crosssection_{\ridI}$ of $\realize\ridI$, let $(\crosssection_{\ridI})_{\regr_{\facF^{c}} \cap \Z\affinesemigp}=\{ \facF \in \crosssection_{\ridI} \mid \virtual{\facF} \supset \facF^{c} \}$ be the set of faces of $\crosssection_{\ridI}$ whose corresponding faces in $\facelattice(\affinesemigp)$ contain $\facF^{c}$. Then, $\polyhedron_{\facF^{c}}:= \facelattice( \crosssection_{\ridI}) \minus (\crosssection_{\ridI})_{\regr_{\facF^{c}} \cap \Z\affinesemigp}$ form a polyhedral complex since $\facG \supseteq \facG' \in (\crosssection_{\ridI})_{\regr_{\facF^{c}} \cap \Z\affinesemigp}$ implies $\facG \in (\crosssection_{\ridI})_{\regr_{\facF^{c}} \cap \Z\affinesemigp}$. Consequently, the graded part of the local cohomology of $\field[\affinesemigp]$ with $\ridI$-support is determined as below. 

\begin{lemma}
\label{lem:non_acyclic_chains_of_local_coho_over_radical}
For a degree $\vecu \in \regr_{\facF^{c}} \cap \Z\affinesemigp$,
\[
\localcoho{\ridI}{i}{\field[\affinesemigp]}_{\vecu} \cong (\localcoho{\text{Ishida}}{\smalli-1}{\rchaincpx{\polyhedron_{\facF^{c}}}_{\text{unmoved}}})\cong (\tilde{H}_{\text{CW}}^{\smalli-2}(\polyhedron_{\facF^{c}})).
\]
\end{lemma}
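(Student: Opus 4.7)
The plan is to compute the $\vecu$-graded piece of the generalized Ishida complex $\ishida^\bullet$ applied to $\field[\affinesemigp]$, which computes $\localcoho{\ridI}{\bullet}{\field[\affinesemigp]}$ by \cref{thm:Ishida_gen}; then to identify this graded piece combinatorially with the unmoved chain complex $\rchaincpx{\polyhedron_{\facF^c}}_{\text{unmoved}}$; and finally to invoke combinatorial Alexander duality via \cref{cor:unmoved_Alexander_dual}.

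First I would check which summands of $\ishida^\bullet$ contribute at degree $\vecu$. Each summand has the form $(\field[\affinesemigp])_{\virtual{\facG}}$ for $\facG \in \facelattice(\crosssection_{\ridI})$, and its $\vecu$-graded piece is a one-dimensional $\field$-vector space if $\vecu \in \affinesemigp - \N\virtual{\facG}$ and zero otherwise. Because $\affinesemigp$ has no hidden regions (the standing assumption throughout \cref{sec:duality}), the bijection $\facelattice(\affinesemigp) \cong \regionr{\arngmt}$ that underlies \cref{cor:non_acyclic_chains_of_local_coho_of_maximal_id} shows that, for $\vecu \in \regr_{\facF^c}$, the condition $\vecu \in \affinesemigp - \N\virtual{\facG}$ is equivalent to $\virtual{\facG} \supseteq \facF^c$. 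Thus the surviving indices are exactly those $\facG \in (\crosssection_{\ridI})_{\regr_{\facF^c} \cap \Z\affinesemigp}$.

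Next I would analyze the differentials. By \cref{def:gen_ishida}, these are given by $\incidence(\facF,\facG)\cdot \nat$, with $\nat$ the canonical inclusion of localizations. Once restricted to a single graded piece, the factor $\nat$ becomes an identification of one-dimensional $\field$-vector spaces and what remains are the CW incidence coefficients $\incidence(\facF,\facG) \in \{0,\pm 1\}$ of $\crosssection_{\ridI}$. Taking into account the shift built into \cref{def:gen_ishida} (the $k$-th term of $\ishida^\bullet$ uses $(k-1)$-dimensional faces of $\crosssection_{\ridI}$), the $\vecu$-graded piece of $\ishida^\bullet$ coincides, up to a shift of one in homological degree, with the chain complex obtained from $\rchaincpx{\crosssection_{\ridI}}$ by deleting the components indexed by faces in $\polyhedron_{\facF^c}$. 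By the construction in \cref{subsec:alexander_duality}, this is precisely $\rchaincpx{\polyhedron_{\facF^c}}_{\text{unmoved}}$, yielding the first isomorphism $\localcoho{\ridI}{i}{\field[\affinesemigp]}_{\vecu} \cong H^{i-1}(\rchaincpx{\polyhedron_{\facF^c}}_{\text{unmoved}})$. The second isomorphism $H^{i-1}(\rchaincpx{\polyhedron_{\facF^c}}_{\text{unmoved}}) \cong \tilde{H}^{i-2}_{\text{CW}}(\polyhedron_{\facF^c})$ then follows directly from \cref{cor:unmoved_Alexander_dual}, which packages combinatorial Alexander duality as a statement about unmoved chain complexes.

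The principal obstacle is in the second step: one must verify carefully that restricting the Ishida differentials to a single graded degree produces exactly the CW incidence signs of $\crosssection_{\ridI}$ and nothing more, and match this with the \emph{unmoved} construction on the set-theoretic complement of $\polyhedron_{\facF^c}$ inside $\facelattice(\crosssection_{\ridI})$. A secondary subtlety is that the Alexander-dual machinery of \cref{subsec:alexander_duality} is stated for polytopes, whereas $\crosssection_{\ridI}$ may be unbounded; however, since only the face-lattice data of $\crosssection_{\ridI}$ enters \cref{cor:unmoved_Alexander_dual}, the machinery applies without change.
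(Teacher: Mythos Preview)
Your proposal is correct and follows essentially the same line as the paper: identify the $\vecu$-graded summands of the Ishida complex as those indexed by $\facG\in(\crosssection_{\ridI})_{\regr_{\facF^c}\cap\Z\affinesemigp}$, recognize the resulting complex as the unmoved Alexander dual chain complex associated to $\polyhedron_{\facF^c}$, and then invoke \cref{cor:unmoved_Alexander_dual}. The only cosmetic difference is bookkeeping: you attribute the first isomorphism to the identification (plus the built-in degree shift of \cref{def:gen_ishida}) and the second to \cref{cor:unmoved_Alexander_dual}, whereas the paper phrases it the other way around; the content is identical. Your caveat about $\crosssection_{\ridI}$ possibly being unbounded is unnecessary, since a transverse section of $\realize\{u\mid x^u\in\ridI\}$ is by construction a polytope.
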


\begin{proof}
The $\vecu$-graded part of the Ishida complex with $\ridI$-support consists of components whose localizations are by faces in $(\crosssection_{\ridI})_{\regr_{\facF^{c}} \cap \Z\affinesemigp}$. Hence, the $\vecu$-graded part of the Ishida complex is equal to the unmoved Alexander dual chain complex of $\polyhedron_{\facF^{c}}$. Therefore, the first isomorphism is from~\cref{cor:unmoved_Alexander_dual}. The second isomorphism is from the difference between homological degrees of $\polyhedron_{\facF^{c}}$ at the Ishida complex and those of $\polyhedron_{\facF^{c}}$ at the CW chain complex.
\end{proof}

By construction, $\polyhedron_{\facF^{c}}$ is a polyhedral complex consisting of faces of $\crosssection_{\ridI}$ whose corresponding faces of $\crosssection$ induce localizations not containing the region $\regr_{\facF^{c}}\cap \Z\affinesemigp$. From a topological viewpoint, $\polyhedron_{\facF^{c}}$ is obtained in two steps; first, cutting out all faces of $\max(\Delta)$ from $\crosssection$ to yield $\crosssection_{\ridI}$. Next, cut out all faces whose corresponding faces of $\crosssection$ inducing localizations containing $\regr_{\facF^{c}}$ from $\crosssection_{\ridI}$ to get $\polyhedron_{\facF^{c}}$. Recall that \emph{cut} is defined rigorously in~\cref{subsec:polyhedra}. We claim that interchanging these two procedures results in a topological space homotopic to $|\polyhedron_{\facF^{c}}|$. 

Let $\crosssection^{\facF^{c}}$ be a simplicial complex whose dual is $(2^{[\dimd]}/\facF^{c})$ on the simplex $\crosssection$. In other words, $\crosssection^{\facF^{c}} := 2^{[\dimd]} \minus (2^{[\dimd]}/\facF^{c})$. $\crosssection^{\facF^{c}}$ is the result of cutting out all faces from $\crosssection$ whose corresponding faces of $\crosssection$ induces localization containing $\regr_{\facF^{c}}$. Now, we cut out all the maximal faces of $\Delta$ from $|\crosssection^{\facF^{c}}|$ if they exist. We claim that the union of those faces cut by this process is the \emph{closure} $\closure{((\crosssection/\facF) \cap \Delta)}$ defined by $\closure{((\crosssection/\facF) \cap \Delta)}:= |\{ \sigma \in 2^{[n]} \mid \sigma \in \tau \text{ for some } \tau  \in (\crosssection/\facF) \cap \Delta\}|$. This is because faces contained in the relative interior of $\crosssection^{\facF^{c}}$ as a topological space are faces containing $\facF$. Hence, cutting a maximal face of $\max(\Delta)$ not containing $\facF$ does not change the combinatorial connectedness of the $\crosssection^{\facF^{c}}$. This argument proves the lemma below.
\begin{lemma}
\label{lem:homotopic_between_original_and_cut_complexes}
If $\facF \neq \crosssection$, $|\polyhedron_{\facF^{c}}|$ is homotopic to the $|\crosssection^{\facF^{c}}| \minus \closure{((\crosssection/\facF) \cap \Delta)}$.
\end{lemma}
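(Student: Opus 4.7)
The plan is to exhibit both topological spaces as the result of applying the same pair of cutting operations to $\crosssection$ in opposite orders, and then to show that swapping the order changes the resulting space only by collapsible collars that contribute no nontrivial homotopy.

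First I would write out the two procedures side by side. The space $|\polyhedron_{\facF^{c}}|$ is obtained by cutting the facets of $\Delta$ from $\crosssection$ (yielding $\crosssection_{\ridI}$) and then removing all faces of the resulting complex whose $\virtual{-}$-image contains $\facF^{c}$. The right-hand space is obtained by removing all faces containing $\facF^{c}$ from $\crosssection$ first (yielding $|\crosssection^{\facF^{c}}|$) and then cutting out $\closure{((\crosssection/\facF) \cap \Delta)}$. Restricted to the intersection of the two cuts, both sides yield the same combinatorial subspace, so the comparison reduces to the symmetric difference.

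Next I would analyze the symmetric difference. It consists of portions of maximal faces $\facG \in \max(\Delta)$ with $\facF \not\subseteq \facG$. On the left such a $\facG$ is removed completely at the first step, while on the right only $\facG \cap \closure{((\crosssection/\facF) \cap \Delta)}$ is cut, leaving a residual piece inside $|\crosssection^{\facF^{c}}|$. These residual pieces are exactly what distinguishes the two spaces, and verifying that they contribute no essential homotopy is the core of the lemma. The third step is then to construct a deformation retraction collapsing each residual piece onto its boundary attachment in $|\polyhedron_{\facF^{c}}|$. Each such piece is a convex polyhedral region glued to $|\polyhedron_{\facF^{c}}|$ along the portion of its boundary exposed by the cut that removed faces containing $\facF^{c}$; a straight-line retraction onto that boundary accomplishes the collapse locally. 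Globally one glues these local retractions across intersections of surplus facets $\facG_{1}, \facG_{2} \in \max(\Delta)$ not containing $\facF$.

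The main obstacle will be ensuring the global coherence of the retraction. Since distinct surplus facets may share boundary strata, I would choose a single affine retraction direction compatible with the polyhedral structure of $\crosssection^{\facF^{c}}$ and away from $\closure{((\crosssection/\facF) \cap \Delta)}$, and then extend the retraction inductively across the poset of intersections of such facets, using that any intersection of facets not containing $\facF$ still does not contain $\facF$ and is therefore collapsed by the same local rule. The hypothesis $\facF \neq \crosssection$ is invoked precisely at this stage, to guarantee that after both cuts the retraction target is nonempty so the collapse is well defined.
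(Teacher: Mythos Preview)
Your proposal is correct and follows essentially the same approach as the paper: both identify $|\polyhedron_{\facF^{c}}|$ and $|\crosssection^{\facF^{c}}|\minus\closure{((\crosssection/\facF)\cap\Delta)}$ as the results of the same two cutting operations performed in opposite orders, and both argue that the discrepancy---portions of maximal faces $\facG\in\max(\Delta)$ with $\facF\not\subseteq\facG$---is homotopically negligible. The paper dispatches this last point more tersely, observing that faces in the relative interior of $\crosssection^{\facF^{c}}$ are exactly those containing $\facF$, so the extra cuts lie on the boundary and do not alter combinatorial connectedness; your explicit retraction construction fleshes out what the paper leaves implicit.
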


Furthermore,

\begin{lemma}
\label{lem:acylic_complexes_of_local_coho_over_radical_ideal}
If $\facF \not\in \bigcap\max(\Delta)$, then $(\localcoho{\ridI}{\bullet}{\field[\affinesemigp]})_{\vecv}=0$ for any $\vecv \in \regr_{\facF^{c}} \cap \Z\affinesemigp$. 
\end{lemma}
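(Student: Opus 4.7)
The plan is to combine the two preceding lemmas with a topological contraction argument. By Lemma~\ref{lem:non_acyclic_chains_of_local_coho_over_radical} I can rewrite $(\localcoho{\ridI}{i}{\field[\affinesemigp]})_{\vecv}$ as $\tilde H^{i-2}_{\mathrm{CW}}(\polyhedron_{\facF^{c}})$, and by Lemma~\ref{lem:homotopic_between_original_and_cut_complexes} it suffices to show that $S:=|\crosssection^{\facF^c}|\setminus\closure{(\crosssection/\facF)\cap\Delta}$ is contractible. Writing $K:=|\crosssection^{\facF^c}|$ for the antistar of $\facF^c$ in the simplex $\crosssection$ and $\Delta_1,\dots,\Delta_k$ for the maximal faces of $\Delta$ containing $\facF$, we have $\closure{(\crosssection/\facF)\cap\Delta}=\bigcup_{i=1}^{k}|\Delta_i|=:L$, so $S=K\setminus L$. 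I may assume $\Delta\ne\crosssection$ (else $\ridI=0$ trivially) and $\facF\ne\crosssection$.

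Two cases arise. If $\facF\notin\Delta$, downward closure forces $L=\emptyset$, and I contract $K$ to any vertex $w\in\facF$ (nonempty as $\emptyset\in\Delta$) via $(p,t)\mapsto(1-t)p+t\,e_w$; interpolants have support $\subseteq\mathrm{supp}(p)\cup\{w\}$, which never contains $\facF^c$ since $w\notin\facF^c$, so the homotopy stays in $K$. Otherwise $\facF\in\Delta$, and the hypothesis $\facF\notin\bigcap\max(\Delta)$ forces $\facF':=\bigcap_i\Delta_i\supsetneq\facF$; I pick $V:=\facF'\setminus\facF\subseteq\facF^c$. Since $\Delta$ is proper, no $\Delta_i$ equals the full simplex $[\dimd]$, so $\Delta_i\not\supseteq\facF^c$ (else $\Delta_i\supseteq\facF\cup\facF^c=[\dimd]$), giving $|\Delta_i|\subseteq K$ and hence $L\subseteq K$.

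The crux is proving contractibility of $K\setminus L$ in this second case, which I handle by a two-stage deformation retract. For $p=\sum c_i e_i\in K\setminus L$, define $\pi(p):=\sum_{i\notin\facF'}(c_i/\beta)\,e_i$ with $\beta:=\sum_{i\notin\facF'}c_i$; well-definedness comes from the fact that $p\notin L$ forces $\mathrm{supp}(p)\not\subseteq\facF'$ (otherwise $\mathrm{supp}(p)\subseteq\facF'\subseteq\Delta_i$ would put $p$ in $L$), so $\beta>0$. The straight-line homotopy $h(p,t):=(1-t)p+t\pi(p)$ preserves $\mathrm{supp}(p)$ for $t\in[0,1)$ and drops exactly the vertices in $\facF'$ at $t=1$; in both regimes the support misses $\facF^c$ (as $V\subseteq\facF^c$ is dropped by $\pi$) and is not contained in any $\Delta_i$ (which would force $p\in L$, using $\facF'\subseteq\Delta_i$). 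This exhibits a deformation retract of $K\setminus L$ onto $F_{\facF'}\setminus L_{\facF'}$, where $F_{\facF'}\subset\crosssection$ is the subsimplex on $[\dimd]\setminus\facF'$ and $L_{\facF'}:=\bigcup_i|\Delta_i\setminus\facF'|$ is a union of proper subsimplices of $F_{\facF'}$ (each $\Delta_i\ne[\dimd]$). Finally, the barycenter $b$ of $F_{\facF'}$ has full support $[\dimd]\setminus\facF'$ and therefore lies in $F_{\facF'}\setminus L_{\facF'}$; the straight-line contraction $(q,s)\mapsto(1-s)q+sb$ produces interpolants whose support equals all of $[\dimd]\setminus\facF'$ for every $s>0$, which no proper $\Delta_i\setminus\facF'$ can contain, so the homotopy stays inside $F_{\facF'}\setminus L_{\facF'}$. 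The main obstacle I anticipate is tracking supports through these interpolations, but the key identity $\mathrm{supp}(p)\subseteq\Delta_i\Leftrightarrow\mathrm{supp}(p)\setminus\facF'\subseteq\Delta_i\setminus\facF'$ (valid since $\facF'\subseteq\Delta_i$) keeps the deformation out of $L$, and the proof concludes.
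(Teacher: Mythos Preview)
Your proof is correct and follows the same overall strategy as the paper: invoke \cref{lem:non_acyclic_chains_of_local_coho_over_radical} and \cref{lem:homotopic_between_original_and_cut_complexes} to reduce the vanishing to the contractibility of $|\crosssection^{\facF^c}|\setminus\closure{(\crosssection/\facF)\cap\Delta}$, then split into the two cases $\facF\notin\Delta$ and $\facF\in\Delta\setminus\bigcap\max(\Delta)$.

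The difference lies in how contractibility is established. The paper argues geometrically and tersely: in the first case it observes that the removed faces meet $|\crosssection^{\facF^c}|$ only along its boundary, and in the second it appeals to the minimal face $\facG\in\bigcap\max(\Delta)$ containing $\facF$ and the position of $\facG$ relative to the boundary of $|\crosssection^{\facF^c}|$. You instead work entirely in barycentric coordinates and build explicit homotopies: a straight-line contraction to a vertex of $\facF$ when $L=\emptyset$, and in the second case a two-stage deformation (first radially project away from $\facF'=\bigcap_i\Delta_i$, then contract to the barycenter of the opposite face). Your support-tracking arguments are careful and the key identity $\mathrm{supp}(p)\subseteq\Delta_i\Leftrightarrow\mathrm{supp}(p)\setminus\facF'\subseteq\Delta_i\setminus\facF'$ is exactly what makes the first stage stay inside $K\setminus L$. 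What you gain is a self-contained, verifiable argument that does not rely on intuition about ``cutting'' or boundaries; what the paper's version gains is brevity. One small point: you exclude $\facF=\crosssection$ without comment, but this is harmless since \cref{lem:homotopic_between_original_and_cut_complexes} already carries that hypothesis, and in that boundary case $\polyhedron_{\facF^c}$ is empty so the Ishida complex is the full reduced chain complex of the polytope $\crosssection_{\ridI}$, which is acyclic.
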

\begin{proof}
From~\cref{lem:homotopic_between_original_and_cut_complexes}, it suffices to show that $|\crosssection^{\facF^{c}}| \minus \closure{((\crosssection/\facF) \cap \Delta)}$ is contractible for any $\facF \not\in \bigcap\max(\Delta)$. First, suppose that $\facF$ belongs to $\Delta$ but not to $\bigcap\max(\Delta)$. Then there exists a minimal face $\facG \in \bigcap\max(\Delta)$ containing $\facF$. Now, $\facG$ contains the boundary of $\crosssection^{\facF^{c}}$, thus cutting $\facG$ from $\crosssection^{\facF^{c}}$ does not change its contractibility. Next, suppose that $\facF \not\in \Delta$. Then, no faces of $\bigcap\max(\Delta)$ contain $\facF$, thus if a face of $\bigcap\max(\Delta)$ intersects $\crosssection^{\facF^{c}}$, then the intersection lies on the boundary of $\crosssection^{\facF^{c}}$ as a topological space. This keeps $|\crosssection^{\facF^{c}}|\minus \closure{((\crosssection/\facF) \cap \Delta)}$ contractible.
\end{proof}

Note that in the case when $\facF \neq \tilde{0}$ or $\crosssection$, $\crosssection^{\facF^{c}}$ is homeomorphic to the ball $D^{\dimd-2}$ since it excludes all interior elements of $\crosssection=2^{[\dimd]}$ and ``punctures" the boundary of $\crosssection$. If $\facF = \tilde{0}$, then $\crosssection^{\facF^{c}}$ is homeomorphic to a sphere $S^{\dimd-2}$. If $\facF = \crosssection$, then $\crosssection^{\facF^{c}}$ is an empty set as a $(-1)$-dimensional polyhedral complex.

Now we are ready to show that there is a homotopic image of $\polyhedron_{\facF^{c}}$ which is the dual of $(\crosssection/\facF) \cap \Delta$ in a sphere $S^{(\dimd-1)-\dim\facF-1}$. Recall that $(\crosssection/\facF) \cap \Delta$ as a section can be seen as a subspace of sphere $S^{(\dimd-1)-\dim\facF-1}$ by taking vertex figures iteratively as mentioned in~\cref{subsec:polyhedra}. 

\begin{lemma}
\label{lem:duality_on_vertex_figure}
$\polyhedron_{\facF^{c}}$ is homotopic to $S^{(\dimd-1)-\dim\facF-1} \minus (\crosssection/\facF) \cap \Delta$ where $(\crosssection/\facF) \cap \Delta$ as a $((\dimd-1)-\dim\facF)$-dimensional polyhedral complex realized in $S^{(\dimd-1)-\dim\facF-1}$.
\end{lemma}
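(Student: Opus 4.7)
The plan is to combine \cref{lem:homotopic_between_original_and_cut_complexes} with a direct geometric identification inside the link sphere. By that lemma, it suffices to exhibit a homotopy equivalence between $|\crosssection^{\facF^c}| \minus \closure{((\crosssection/\facF) \cap \Delta)}$ and $S \minus ((\crosssection/\facF) \cap \Delta)$, where $S := S^{(\dimd-1)-\dim\facF-1}$ denotes the sphere obtained by iterating the vertex-figure construction of~\cref{subsec:polyhedra} centered at a chosen point $p \in \relint(\facF)$.

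First, I would realize $\crosssection/\facF$ as a polyhedral subcomplex of $S$ by iterating the vertex-figure construction over the $\dim\facF+1$ vertices of $\facF$. By~\cref{lem:classification_of_vertex_figure}, the result is a closed $((\dimd-1)-\dim\facF-1)$-disk $D \subseteq S$ whose complementary closed disk $D' := S \minus \mathrm{int}(D)$ is contractible. In particular, the subcomplex $(\crosssection/\facF) \cap \Delta$ lies entirely inside $D$.

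Next, I would construct a strong deformation retraction from $|\crosssection^{\facF^c}|$ onto $D$ by radial projection from $p$: since $|\crosssection^{\facF^c}|$ is the antistar of $\facF^c$ in $\crosssection$ (equivalently, the closed star of $\facF$ inside $\partial\crosssection$), every ray from $p$ through a point of $|\crosssection^{\facF^c}|$ meets a small transverse sphere around $p$ in $D$. This retraction carries the subspace $\closure{((\crosssection/\facF) \cap \Delta)}$ onto the realization of $(\crosssection/\facF) \cap \Delta$ inside $D$, yielding
\[
|\crosssection^{\facF^c}| \minus \closure{((\crosssection/\facF) \cap \Delta)} \simeq D \minus ((\crosssection/\facF) \cap \Delta).
\]

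Finally, I would enlarge $D$ to the full sphere $S$. Writing $S \minus ((\crosssection/\facF) \cap \Delta) = (D \minus ((\crosssection/\facF) \cap \Delta)) \cup D'$, glued along $\partial D = \partial D'$, and noting that $D'$ is contractible while $\partial D$ meets the removed subcomplex only in a collar-like fashion (since $(\crosssection/\facF) \cap \Delta$ lies in $D$), a Mayer--Vietoris / gluing argument shows the inclusion $D \minus ((\crosssection/\facF) \cap \Delta) \hookrightarrow S \minus ((\crosssection/\facF) \cap \Delta)$ is a homotopy equivalence, completing the proof. The main obstacle I anticipate is the second step: verifying that the radial projection from $p$ correctly identifies the closure $\closure{((\crosssection/\facF) \cap \Delta)}$ sitting inside $|\crosssection^{\facF^c}|$ with the subcomplex $(\crosssection/\facF) \cap \Delta$ realized in $D$. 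I expect to handle this by induction on $\dim\facF$, with the base case $\dim\facF = 0$ manifest from the definition of the vertex figure and the inductive step reducing to a single application of the vertex-figure construction on the previously constructed section.
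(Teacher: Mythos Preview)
There is a genuine gap in Step~4. The inclusion $D \smallsetminus X \hookrightarrow S \smallsetminus X$, with $X = (\crosssection/\facF)\cap\Delta$, is \emph{not} a homotopy equivalence, and the contractibility of $D'$ does not rescue it: gluing in the disk $D'$ along its full boundary $\partial D' = \partial D$ is exactly attaching a top-dimensional cell, which in general changes the homotopy type. Worse, since $\Delta$ is a proper subcomplex of $\crosssection$, every face of $(\crosssection/\facF)\cap\Delta$ already lies in $\partial(\crosssection/\facF)=\partial D$, so $D\smallsetminus X$ is a disk with part of its boundary removed and is therefore \emph{always} contractible, whereas $S\smallsetminus X$ typically is not. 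For a concrete failure, take $d=3$ (so $\crosssection$ is a triangle), $\facF$ a vertex, and $\Delta$ the two edges of $\crosssection$ through $\facF$. Then $D=\crosssection/\facF$ is an interval, $X=\partial D$ consists of its two endpoints, $D\smallsetminus X$ is an open interval, but $S^{1}\smallsetminus X$ is two disjoint arcs. Step~3 has a related problem: the radial projection you describe, restricted to $|\crosssection^{\facF^{c}}|\subseteq|\partial\crosssection|$, lands in the link of $\facF$ in $\partial\crosssection$, which is $\partial D$, not in the full disk $D$; so the target of your retraction is misidentified.

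The paper's argument avoids both issues by never passing through $D$. Instead of a single projection onto $\crosssection/\facF$ followed by an enlargement to $S$, it takes the vertex figure of $\crosssection^{\facF^{c}}\smallsetminus\closure{((\crosssection/\facF)\cap\Delta)}$ at \emph{one} vertex of $\facF$ at a time, using \cref{lem:classification_of_vertex_figure} at each step to see that the intermediate complex is still a disk with the remaining image of $\facF$ in its relative interior; only at the last vertex does the construction produce the sphere $S^{(\dimd-1)-\dim\facF-1}$ directly, with the image of $(\crosssection/\facF)\cap\Delta$ removed. No ``extend from a hemisphere to the sphere'' step is needed.
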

\begin{proof}
If $\facF = \tilde{0}$, then $\polyhedron_{\facF^{c}}$ is combinatorially equivalent to the empty set as a polytope and $(\crosssection/\facF)\cap\Delta \cong S^{\dimd-2}$. Also, if $\facF = \crosssection$, then $\Delta = \crosssection$, thus $\polyhedron_{\facF^{c}}= \facelattice(\crosssection) \minus \{ \crosssection\}$ and $(\crosssection/\facF) \cap \Delta = \{ \crosssection\}$. Hence the statement holds for these two cases.

If $\facF$ is nonempty, not maximal nor minimal in $\facelattice(\crosssection)$, recall that $\crosssection^{\facF^{c}}$ is a simplicial complex homotopic to $D^{\dimd-2}$ having $\facF$ in its relative interior. We claim that $\crosssection^{\facF^{c}} \minus \closure{((\crosssection/\facF) \cap \Delta)}$ is homotopic to its image on $S^{\dimd-3}$. To see this, pick a vertex $\facv$ in $\facF$ and take a sphere $S^{\dimd-3}$ centered at $\facv$ but not containing any other vertices. By translation, assume $\facv$ is the origin of $\R^{\dimd-2}$ embedded in $\polyhedron_{\facF^{c}}$. This induces a canonical homotopy map from punctured $\R^{d-2}$ to the sphere $S^{\dimd-3}$ restricted to $\crosssection^{\facF^{c}} \minus \closure{((\crosssection/\facF) \cap \Delta)}$ giving the desired homotopy. Lastly, use~\cref{lem:homotopic_between_original_and_cut_complexes} and~\cref{lem:classification_of_vertex_figure} to conclude that taking vertex figure on $\crosssection^{\facF^{c}}$ preserves its image over a polyhedral complex homeomorphic to $D^{\dimd-2}$ if $\dim\facF \geq 1$, or to $S^{\dimd-3}$ if $\dim\facF = 0$. Iterate this for the other vertices in $\facF$ to complete the argument.
\end{proof}

\begin{corollary}
\label{cor:duality_between_complexes}
For any $i\in \Z$, $\tilde{H}_{\text{CW}}^{i}(\polyhedron_{\facF^{c}}) \cong \tilde{H}_{\text{simp}}^{((\dimd-1)-\dim\facF-1)-i-1}((\crosssection/\facF) \cap \Delta)$.
\end{corollary}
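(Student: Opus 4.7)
The plan is to combine \cref{lem:duality_on_vertex_figure} with topological Alexander duality (\cref{thm:Alexander_dual}). Writing $N := (\dimd-1)-\dim\facF-1$ and $K := (\crosssection/\facF)\cap\Delta$, \cref{lem:duality_on_vertex_figure} provides a homotopy equivalence $\polyhedron_{\facF^{c}} \simeq S^{N}\minus K$. Homotopy invariance of reduced CW cohomology then gives
\[
\tilde{H}_{\text{CW}}^{i}(\polyhedron_{\facF^{c}}) \;\cong\; \tilde{H}^{i}(S^{N}\minus K),
\]
so the corollary reduces to the identity $\tilde{H}^{i}(S^{N}\minus K) \cong \tilde{H}^{N-i-1}_{\text{simp}}(K)$.

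For this second isomorphism I would invoke the cohomological form of topological Alexander duality applied to the pair $(S^{N},K)$. The hypotheses of \cref{thm:Alexander_dual} are met: $K$ is a finite simplicial subcomplex of $S^{N}$, hence compact and locally contractible, and in the nondegenerate range of $\facF$ the subspace $K$ is proper and nonempty. Since simplicial, CW, and singular (co)homology agree on finite CW complexes, substituting $N = (\dimd-1)-\dim\facF-1$ into the Alexander shift $i \mapsto N-i-1$ produces exactly the exponent on the right-hand side of the corollary.

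The main obstacle I anticipate is twofold. First is the boundary bookkeeping: the degenerate cases $\facF = \tilde{0}$ and $\facF = \crosssection$, where $\polyhedron_{\facF^{c}}$ is empty or equals $\facelattice(\crosssection)\minus\{\crosssection\}$ respectively (as handled in the proof of \cref{lem:duality_on_vertex_figure}), have to be checked by hand since Alexander duality does not apply when $K$ is improper or empty. Second is the passage between the homological and cohomological forms of Alexander duality: \cref{thm:Alexander_dual} as stated gives $\tilde{H}_{i}(S^{N}\minus K) \cong \tilde{H}^{N-i-1}(K)$, and to obtain the cohomological version used above one can either appeal to the dual CW structure on $S^{N}$ induced by $K$, or work at the chain-complex level through the unmoved Alexander dual construction of \cref{cor:unmoved_Alexander_dual}, which encodes the duality directly without universal coefficients.
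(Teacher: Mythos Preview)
Your proposal is correct and follows essentially the same route as the paper: apply \cref{lem:duality_on_vertex_figure} to replace $\polyhedron_{\facF^{c}}$ by $S^{N}\setminus K$, then invoke topological Alexander duality (\cref{thm:Alexander_dual}) to land on the cohomology of $(\crosssection/\facF)\cap\Delta$ with the shifted index. The paper handles your second flagged obstacle---passing from the homological statement of \cref{thm:Alexander_dual} to the cohomological form---by appealing in one line to ``the isomorphism between simplicial homology and cohomology,'' so your instinct that this step needs an extra word (dual CW structure, or the unmoved construction of \cref{cor:unmoved_Alexander_dual}) is well placed; the paper does not treat the degenerate cases $\facF=\tilde{0}$ and $\facF=\crosssection$ separately here, having already dispatched them inside \cref{lem:duality_on_vertex_figure}.
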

\begin{proof}
\cref{lem:duality_on_vertex_figure} shows $\tilde{H}_{\text{CW}}^{i}(\polyhedron_{\facF^{c}}) \cong \tilde{H}_{\text{CW}}^{i}(S^{(\dimd-1)-\dim\facF-1} \minus (\crosssection/\facF) \cap \Delta)$. Hence, from the topological Alexander duality and the isomorphism between simplicial homology and cohomology, 
\begin{align*}
\tilde{H}_{\text{CW}}^{i}(\polyhedron_{\facF^{c}}) &\cong \tilde{H}_{\text{CW},((\dimd-1)-\dim\facF-1)-i-1}((\crosssection/\facF) \cap \Delta)\cong \tilde{H}_{\text{simp},((\dimd-1)-\dim\facF-1)-i-1}((\crosssection/\facF) \cap \Delta)\\
&\cong \tilde{H}_{\text{simp}}^{((\dimd-1)-\dim\facF-1)-i-1}((\crosssection/\facF) \cap \Delta).
\end{align*}
\end{proof}

\begin{corollary}
\label{cor:duality_of_local_cohomologies_2}
Let $\vecu \in \setS_{\facF}$ where $\setS_{\facF} \in \mathcal{G}(\field[x]/\ridI)$ indexed by $(\facF,\facF)$. Then, for any $\vecv \in \regr_{\facF^{c}} \cap \Z\affinesemigp$, 
\[
\localcoho{\maxid}{\smalli}{\field[\affinesemigp]/\ridI}_{\vecu} \cong \localcoho{\ridI}{\dimd-\smalli}{\affinesemigp}_{\vecv}.
\]
\end{corollary}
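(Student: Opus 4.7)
The plan is simply to concatenate the three graded computations already assembled in this section, tracking the homological indices carefully. The substantive topological content (Alexander duality between $\polyhedron_{\facF^{c}}$ and $(\crosssection/\facF)\cap\Delta$, and the homotopy in \cref{lem:duality_on_vertex_figure}) has already been packaged into \cref{cor:duality_between_complexes}, so what remains is essentially a bookkeeping argument.

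First, I would apply \cref{thm:non_acyclic_chains_of_local_coho_of_maximal_id} to the left-hand side. Since $\vecu \in \setS_{\facF}$ is a degree in the minimal open set indexed by the diagonal pair $(\facF,\facF)$, that theorem immediately yields
\[
\localcoho{\maxid}{\smalli}{\field[\affinesemigp]/\ridI}_{\vecu} \cong \tilde{H}_{\text{simp}}^{\smalli-\dim\facF-1}\bigl((\crosssection/\facF) \cap \Delta\bigr).
\]
Next I would rewrite the right-hand side using \cref{lem:non_acyclic_chains_of_local_coho_over_radical}. Since $\vecv \in \regr_{\facF^{c}} \cap \Z\affinesemigp$ and $\facF^{c}$ is a genuine face (by the no-hidden-regions hypothesis, which lets $\facF^{c}$ pass between $\facelattice(\affinesemigp)$ and $\regionr{\arngmt}$), the lemma gives
\[
\localcoho{\ridI}{\dimd-\smalli}{\field[\affinesemigp]}_{\vecv} \cong \tilde{H}_{\text{CW}}^{\dimd-\smalli-2}(\polyhedron_{(\facF^{c})^{c}}) = \tilde{H}_{\text{CW}}^{\dimd-\smalli-2}(\polyhedron_{\facF}),
\]
after using $(\facF^{c})^{c}=\facF$; more pertinently, I would apply the lemma with the roles swapped so the relevant complex is $\polyhedron_{\facF^{c}}$ associated to $\vecv$'s region. (A separate edge case must be checked when $\facF = \crosssection$ or $\facF = \tilde{0}$, where $\setS_{\facF}$ is taken to be the ``fallback'' set in the statement; in both extremes one of the two complexes involved is empty or the full boundary sphere, and the isomorphism reduces to a direct verification.)

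Now I would invoke \cref{cor:duality_between_complexes} at the relevant CW degree. Plugging $i=\dimd-\smalli-2$ into that corollary gives
\[
\tilde{H}_{\text{CW}}^{\dimd-\smalli-2}(\polyhedron_{\facF^{c}}) \cong \tilde{H}_{\text{simp}}^{\bigl((\dimd-1)-\dim\facF-1\bigr)-(\dimd-\smalli-2)-1}\bigl((\crosssection/\facF)\cap\Delta\bigr),
\]
and the exponent on the right simplifies to $\smalli - \dim\facF - 1$, matching the exponent from the first step exactly. Composing the three isomorphisms finishes the proof.

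The only real obstacle is the edge-case where the diagonal minimal open set $\setS_{\facF}$ does not actually arise from a degree pair of $\ridI$ (so the ``otherwise'' clause of the hypothesis is invoked) and the boundary cases $\facF = \tilde{0}$, $\facF = \crosssection$; in each I would quickly verify that both sides are simultaneously zero or simultaneously one-dimensional in the correct cohomological degree, using the explicit descriptions of $\polyhedron_{\facF^{c}}$ and $(\crosssection/\facF)\cap\Delta$ from \cref{lem:duality_on_vertex_figure}. Everything else is pure index arithmetic.
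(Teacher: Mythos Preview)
Your proposal is correct and takes essentially the same approach as the paper: chain together \cref{thm:non_acyclic_chains_of_local_coho_of_maximal_id}, \cref{cor:duality_between_complexes}, and \cref{lem:non_acyclic_chains_of_local_coho_over_radical}, then do the index arithmetic. One small remark: your momentary detour through $\polyhedron_{(\facF^{c})^{c}}$ is a misreading of \cref{lem:non_acyclic_chains_of_local_coho_over_radical}---that lemma is stated so that a degree in $\regr_{\facF^{c}}$ lands directly on $\polyhedron_{\facF^{c}}$, so no complementation-juggling is needed; and the edge cases $\facF=\tilde 0$, $\facF=\crosssection$ and the ``otherwise'' clause are not part of this corollary's hypothesis, so you need not treat them here (the paper handles them separately in the proof of \cref{thm:duality_of_local_cohomologies}).
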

\begin{align*}
\localcoho{\maxid}{\smalli}{\field[\affinesemigp]/\ridI}_{\vecu} & \underbrace{\cong}_{\cref{thm:non_acyclic_chains_of_local_coho_of_maximal_id}} \tilde{H}_{\text{simp}}^{\smalli-\dim \facF-1}((\crosssection/\facF) \cap \Delta)\\
& \underbrace{\cong}_{\cref{cor:duality_between_complexes}} \tilde{H}_{\text{CW}}^{ (d-1)-\dim\facF - 1 -(\smalli-\dim \facF-1) -1}(\polyhedron_{\facF^{c}}) \\ 
& \underbrace{\cong}_{\cref{lem:non_acyclic_chains_of_local_coho_over_radical}} \localcoho{\ridI}{ (d-1)-\dim\facF - 1 -(\smalli-\dim \facF-1) -1+2}{\field[Q]}_{\vecv}  \cong \localcoho{\ridI}{ d-i}{\field[Q]}_{\vecv}.
\end{align*}

We are finally ready to prove the main result of this section.

\begin{proof}[Proof of~\cref{thm:duality_of_local_cohomologies}]
If $\Delta=\crosssection$, then $\ridI=0$. Hence $\localcoho{\maxid}{i}{\field[\affinesemigp]/\ridI} =0$ except for $i= \dimd$, also $\localcoho{0}{i}{\field[\affinesemigp]} =0$ except for $i=0$. Moreover, by computation, $\localcoho{\maxid}{\dimd}{\field[\affinesemigp]/\ridI} = \field[\regr_{\emptyset}\cap\bigcup\deg(\field[x])]$ and $\localcoho{0}{0}{\field[\affinesemigp]}=\field[\affinesemigp] =\field[\regr_{[d]} \cap \bigcup\deg(\field[x])]$. Therefore, the desired duality holds. 

If $\Delta = 0$, then $\ridI = \maxid$. Hence, from the generalized Ishida complex $\localcoho{\maxid}{i}{\field[\affinesemigp]/\maxid} =0$ except for $i=0$; in case of $i=0$, $\localcoho{\maxid}{i}{\field[\affinesemigp]/\ridI} =\field = \field[\{0\} \cap \bigcup\deg(\field[x])]$. Also, $\localcoho{\maxid}{i}{\field[\affinesemigp]} =0$ except for $i=\dimd$, and $\localcoho{\maxid}{\dimd}{\field[\affinesemigp]} = \field[\regR_{\emptyset} \cap \bigcup\deg(\field[x])]$. Thus, the duality holds for this case.

For all other cases, $\Delta$ is a proper subcomplex of $\crosssection$.~\cref{thm:non_acyclic_chains_of_local_coho_of_maximal_id} shows that minimal open sets with index $(\facF,\facF)$ for all $\facF \in \bigcap\max(\Delta)$ may have nonzero local cohomology. Also, \cref{lem:acylic_complexes_of_local_coho_over_radical_ideal} shows that minimal open sets $\regR_{\facF^{c}}\cap\Z\affinesemigp$ for any $\facF \in  \bigcap\max(\Delta)$ may have nonzero local cohomology. Lastly,~\cref{cor:duality_of_local_cohomologies_2} provides the desired local cohomology. 
\end{proof}

\begin{figure*}[t!]
\centering
\begin{tabular}{|c|c|c|c|c|}
\hline
$d$ &$\subcpx$& $\ridI$ & $\deg_{\Z^{d}} H_{\maxid}^{i}(\field[\affinesemigp]/\ridI)$ & $\deg_{\Z^{d}} H_{\ridI}^{i}(\field[\affinesemigp])$\\
\hline
1 &1-sim& 0 & $(\emptyset, \regr_{\emptyset})$ & $(\regr_{1}, \emptyset)$\\
1 &$\emptyset$& $\maxid$ &$(\{ 0\},\emptyset)$ & $(\emptyset, \regr_{\emptyset}) $ \\
\hline
2 &2-sim& 0 &  $(\emptyset, \emptyset,\regr_{\emptyset})$ & $(\regr_{1,2}, \emptyset,\emptyset)$ \\
2 &pt& $\langle x\rangle$ & $(\emptyset, \regr_{2}, \emptyset  )$  & $(\emptyset, \regr_{1},\emptyset)$\\
2 &pt& $\langle y\rangle$ & $(\emptyset, \regr_{1}, \emptyset  )$  & $(\emptyset, \regr_{2},\emptyset)$\\
2 &2 pts& $\langle xy\rangle$ & $(\emptyset,\regr_{1}\cup\regr_{2} \cup\{0\}, \emptyset  )$  & $(\emptyset, \regr_{1}\cup \regr_{2} \cup \regr_{\emptyset},\emptyset)$\\
2 &$\emptyset$& $\maxid$ & $(\{0\}, \emptyset,\emptyset )$  & $(\emptyset, \emptyset,\regr_{\emptyset})$\\
\hline
3 &3-sim& 0 & $(\emptyset,\emptyset,\emptyset,\regr_{\emptyset} )$ &$(\regr_{1,2,3}, \emptyset,\emptyset,\emptyset)$\\
3 &2-sim(yz)& $\langle x\rangle$ &  $(\emptyset, \emptyset,\regr_{1},\emptyset )$ &  $(\emptyset, \regr_{2,3},\emptyset,\emptyset )$ \\
3 &2-sim(xz)& $\langle y\rangle$ &  $(\emptyset, \emptyset,\regr_{2},\emptyset )$ &  $(\emptyset, \regr_{1,3},\emptyset,\emptyset )$ \\
3 &2-sim(xy)& $\langle z\rangle$ &  $(\emptyset, \emptyset,\regr_{3},\emptyset )$ &  $(\emptyset, \regr_{1,2},\emptyset,\emptyset )$ \\
3 &(xz,yz)& $\langle xy\rangle$ &  $(\emptyset, \emptyset,\regr_{1}\cup\regr_{2}\cup\regr_{1,2},\emptyset )$ &  $(\emptyset, \regr_{2,3}\cup \regr_{1,3} \cup \regr_{3},\emptyset,\emptyset )$ \\
3 &(xy,yz)& $\langle xz\rangle$ &  $(\emptyset, \emptyset,\regr_{1}\cup\regr_{3}\cup\regr_{1,3},\emptyset )$ &  $(\emptyset, \regr_{2,3}\cup \regr_{1,2} \cup \regr_{2},\emptyset,\emptyset )$ \\
3 &(xy,xz)& $\langle yz\rangle$ &  $(\emptyset, \emptyset,\regr_{2}\cup\regr_{3}\cup\regr_{2,3},\emptyset )$ &  $(\emptyset, \regr_{1,3}\cup \regr_{1,2} \cup \regr_{1},\emptyset,\emptyset )$ \\
3 &(xy,yz,xz)& $\langle xyz\rangle$ &  $(\emptyset,\emptyset, (\bigcup_{\substack{i=\{1 \},\{ 2\},\{3\},\\ \{1,2\},\{1,3\},\{2,3\}}}\regr_{i})\cup \{0 \}),\emptyset )$ &  $(\emptyset, (\bigcup_{i \neq \{ 1,2,3\}}\regr_{i}),\emptyset,\emptyset )$ \\
3 &(x,yz)& $\langle xy,xz\rangle$ &  $(\emptyset, \regr_{2,3} \cup \{ 0\}, \regr_{1} ,\emptyset )$ &  $(\emptyset, \regr_{2,3},\regr_{1}\cup\regr_{\emptyset},\emptyset )$ \\
3 &(y,xz)& $\langle xy,yz\rangle$ &  $(\emptyset, \regr_{1,3} \cup \{ 0\}, \regr_{2} ,\emptyset )$ &  $(\emptyset, \regr_{1,3},\regr_{2}\cup\regr_{\emptyset},\emptyset )$ \\
3 &(z,xy)& $\langle xz,yz\rangle$ &  $(\emptyset, \regr_{1,2} \cup \{ 0\}, \regr_{3} ,\emptyset )$ &  $(\emptyset, \regr_{1,2},\regr_{3}\cup\regr_{\emptyset},\emptyset )$ \\
3 &(y,z)& $\langle x,yz\rangle$ &  $(\emptyset, \regr_{2,3} \cup\regr_{1,2} \cup \{ 0\}, \emptyset ,\emptyset )$ & $(\emptyset,\emptyset, \regr_{1} \cup \regr_{3}\cup\regr_{\emptyset}, \emptyset) $  \\
3 &(x,z)& $\langle y,xz\rangle$ &  $(\emptyset, \regr_{1,3} \cup\regr_{1,2} \cup \{ 0\}, \emptyset ,\emptyset )$ & $(\emptyset,\emptyset, \regr_{2} \cup \regr_{3}\cup\regr_{\emptyset}, \emptyset) $  \\
3 &(x,y)& $\langle z,yz\rangle$ &  $(\emptyset, \regr_{1,3} \cup\regr_{2,3} \cup \{ 0\}, \emptyset ,\emptyset )$ & $(\emptyset,\emptyset, \regr_{2} \cup \regr_{1}\cup\regr_{\emptyset}, \emptyset) $  \\
3 &(z)& $\langle x,y\rangle$ &  $(\emptyset, \regr_{1,2}, \emptyset ,\emptyset )$ & $(\emptyset,\emptyset, \regr_{3}, \emptyset) $  \\
3 &(x)& $\langle y,z\rangle$ &  $(\emptyset, \regr_{2,3}, \emptyset ,\emptyset )$ & $(\emptyset,\emptyset, \regr_{1}, \emptyset) $  \\
3 &(y)& $\langle x,z\rangle$ &  $(\emptyset, \regr_{1,3}, \emptyset ,\emptyset )$ & $(\emptyset,\emptyset, \regr_{2}, \emptyset) $  \\
3 &(x,y,z)& $\langle xy,xz,yz\rangle$ &  $(\emptyset, \regr_{2,3}\cup\regr_{1,3}\cup\regr_{1,2}\cup\{0\}^{2}, \emptyset ,\emptyset )$ & $(\emptyset,\emptyset,  \regr_{1}\cup\regr_{2}\cup\regr_{3}\cup\regr_{\emptyset}^{2}, \emptyset) $  \\
3& $\emptyset$ & $\maxid$ & $(\{ 0\}, \emptyset,\emptyset,\emptyset)$&$(\emptyset,\emptyset,\emptyset,\regr_{\emptyset} )$\\
\hline
\end{tabular}
\caption{Table of local cohomologies over a simplicial affine semigroup ring $\field[\affinesemigp]$ when whose dimension $d$ is 1,2, or 3.}
\label{fig:duality_simplicial_affinesemigp}
\end{figure*}
\begin{example}
\label{ex:low_dim_cases}
In~\cref{fig:duality_simplicial_affinesemigp}, we summarize the degrees of local cohomology over a simplicial affine semigroup ring $\field[\affinesemigp]$ (with $\dim\field[\affinesemigp]=1,2,$ or $3$) and with a radical monomial ideal $J_{\Delta}$ from a simplicial complex generated by $\{x\}$ (resp. $\{x,y\}$ or $\{x,y,z\}$) corresponding to the variables of $\field[\affinesemigp]$. 

For example, the 12th row of the table illustrates that, when $\field[Q]$ is a 3-dimensional simplicial affine semigroup ring, and $\Delta$ is a simplicial complex consisting of two edges $\overline{xy}$ and $\overline{yz}$, then its corresponding radical monomial ideal is $\<xy\>$, thus for any $\overline{x^{\vec{u}}} \in \field[\affinesemigp]/J_{\Delta}$ with $\vec{u} \in \regr_{1}\cup\regr_{2}\cup \regr_{1,2}$ and $x^{\vec{v}} \in \field[\affinesemigp]$ with $\vec{v} \in \regr_{2,3}\cup \regr_{1,3} \cup \regr_{3},$
\[
H_{\maxid}^{2}(\field[\affinesemigp]/\ridI)_{\vec{u}} \cong H_{J_{\Delta}}^{3-2}(\field[\affinesemigp]/\ridI)_{\vec{v}}
\]
and for all other $\vec{u}$ and $\vec{v}$,
\[
H_{\maxid}^{i}(\field[\affinesemigp]/\ridI)_{\vec{u}} = H_{J_{\Delta}}^{i}(\field[\affinesemigp]/\ridI)_{\vec{v}}=0.
\]
\end{example}

\bibliographystyle{plain}
\bibliography{GIshida}
\end{document}